\documentclass[12pt,amsymb,fullpage]{amsart}
\usepackage{amssymb,amscd,pstricks}

\newtheorem{theorem}{Theorem}[section]
\newtheorem{defn}[theorem]{Definition}

\newtheorem{lemma}[theorem]{Lemma}

\newtheorem{eple}[theorem]{Example}
\newtheorem{rmk}[theorem]{Remarks}
\newtheorem{dsc}[theorem]{Discussion}
\newtheorem{nota}[theorem]{Notation}

\newsavebox{\indbin}
\savebox{\indbin}{\begin{picture}(0,0)
\newlength{\gnu}
\settowidth{\gnu}{$\smile$} \setlength{\unitlength}{.5\gnu}
\put(-1,-.65){$\smile$} \put(-.25,.1){$|$}
\end{picture}}

\newcommand{\be}{\begin{enumerate}}
\newcommand{\bd}{\begin{defn}}
\newcommand{\bt}{\begin{theorem}}
\newcommand{\bl}{\begin{lemma}}
\newcommand{\ee}{\end{enumerate}}
\newcommand{\ed}{\end{defn}}
\newcommand{\et}{\end{theorem}}
\newcommand{\el}{\end{lemma}}

\begin{document}
\title{Some Arguments for the Wave Equation in Quantum Theory 3}
\author{Tristram de Piro}
\address{Flat 3, Redesdale House, 85 The Park, Cheltenham, GL50 2RP}
 \email{t.depiro@curvalinea.net}
\thanks{}
\begin{abstract}
We prove there exist solutions $(\rho,J))$ to the $1$-dimensional wave equation on $[-\pi,\pi]$, such that when $(\rho,J)$ are extended to a smooth solution $(\rho,\overline{J})$ of the continuity equation on a vanishing annulus $Ann(1,\epsilon)$ containing the unit circle $S^{1}$, with a corresponding causal solution $(\rho,\overline{J},\overline{E},\overline{B})$ to Maxwell's equations, obtained from Jefimenko's equations, the power radiated in a time cycle from any sphere $S(r)$ with $r>0$ is $O({1\over r})$, so that no power is radiated at infinity over a cycle.
\end{abstract}
\maketitle

\begin{section}{Introduction}
This paper is divided into three parts. In the first part, in Lemma \ref{wave}, we prove a simple result showing that we can produce a pair of solutions $(\Psi,J)$ to the $1$-dimensional wave equation on the circle, with given initial conditions $\Psi_{0}$ and ${\partial\Psi\over \partial t}|_{0}$ such that ${\partial \Psi \over \partial t}+{\partial J\over \partial x}=0$. This last equation allows us, in the second part, in Lemma \ref{bump}, to extend the pair $(\Psi,J)$ to a smooth pair $(\rho,\overline{J})$ satisfying the continuity equation ${\partial \rho\over \partial t}+\bigtriangledown\centerdot\overline{J}=0$ in three dimensions, which restricts to the pair $(\Psi,J)$ on the unit circle, but may not satisfy the three dimensional wave equation, see \cite{dep3}. We also prove some results about possible flows on the unit circle which satisfy the continuity equation. In the third part, we use a result from \cite{dep3} to construct a solution $(\rho,\overline{J},\overline{E},\overline{B})$ to Maxwell's equations from the pair $(\rho,\overline{J})$, using Jefimenko's equations, which is referred to as the causal solution in \cite{G}. In Lemma \ref{2terms}, we calculate the power radiated $P(r,t)$ over the sphere $S(r)$ of radius $r$ for a fundamental solution $\rho^{1}=cos(mx)cos(mt)$, $J^{1}=sin(mx)sin(mt)$ to the wave equation on the circle and note that it does not satisfy the no radiating condition, in the sense that $lim_{r\rightarrow\infty}P(r,t)=0$, even when averaged over a cycle $(t,t+{\pi\over m})$. The calculation does not involve any approximations. However, in Lemma \ref{infinity}, we find that we can satisfy the no radiation condition over a cycle, $lim_{r\rightarrow\infty}\int_{t}^{t+{\pi\over m}}P(r,t)dt=0$, by taking a linear combination of fundamental solutions, setting three of the parameters to be equal, and reversing the sign of the fourth. This result relies on Poynting's theorem and the fact that the mechanical energy of a charge and current configuration restricted to a circle, can be computed over any ball $B(r)$ with $r>1$. However, there are still some issues with computing the mechanical energy, which we hope can be resolved accurately in \cite{dep6}.\\
\indent The result is interesting because the Larmor formula predicts that the power radiated at infinity by an accelerating charge, $P={\mu_{0}q^{2}a^{2}\over 6\pi c}$ is non-zero, unless the particle travels in a straight line. However, for a collection of particles, we obtain an effective cancellation when averaged over a cycle. Rutherford also used Larmor's result to suggest that the electrons in an atomic orbit couldn't orbit the nucleus in circular or elliptical orbits, as they would lose energy and spiral into the nucleus. The result in the paper suggests, however, that there are atomic configurations of charge and current which retain energy over a cycle, and, therefore, wouldn't collapse as Rutherford predicted.

\indent In the final remark, we show that we can generalise the results of the paper to the $1$-dimensional wave equation with velocity $c$. Analogously, in the paper \cite{dep3}, we proved that for any initial conditions $\{\rho_{0},{\partial\rho\over \partial t}|_{0}\}$, there exists solutions $(\rho,\overline{J})$ to the three dimensional wave equations with velocity $c$, a connecting relation, with velocity $c$, and the continuity equation;\\

$\square^{2}\rho=0$, $\square^{2}\overline{J}=0$\\

$\bigtriangledown(\rho)+{1\over c^{2}}{\partial \overline{J}\over \partial t}=\overline{0}$ $(\dag)$\\

${\partial\rho\over\partial t}+\bigtriangledown\centerdot\overline{J}=0$\\

where $\square^{2}$ denotes the d'Alembertian operator, such that there \emph{exist} fields $\{\overline{E}_{S},\overline{B}_{S}\}$ in every inertial frame $S$, with $div(\overline{E}_{S}\times \overline{B}_{S})=0$, and $(\rho_{S},\overline{J}_{S},\overline{E}_{S},\overline{B}_{S})$ solutions to Maxwell's equation for the transformed charge and current $(\rho_{S},\overline{J}_{S})$. In particularly, the power radiated $P(r,t)$ over any sphere $S(r)$ is identically zero, without averaging, and irrespective of the inertial frame. This seems to be a stronger type of radiation than that considered in this paper, and would probably have to be generated over a sphere, using a cavity magnetron, rather than a circular antennae.

\indent In the paper \cite{dep7}, we proved that an atomic system which satisfies the no radiating condition and is in thermal equilibrium must also satisfy the equations $(\dag)$, but we have not yet shown the converse, that there is an atomic system which satisfies the equations $(\dag)$ and is in thermal equilibrium. However, the result of this paper together with ongoing work in \cite{dep5} and \cite{dep6}, yield a method for finding the appropriate initial conditions, see the final Lemma \ref{thermal}.

\end{section}
\begin{section}{The Wave Equation on a Circle}
\begin{defn}
\label{definitions}
We let $C^{\infty}([-\pi,\pi])$, $C(\mathcal{R})$ and $C^{\infty}(\mathcal{R})$ have their conventional meanings. We let $T=[-\pi,\pi]\times\mathcal{R}$ and let $T^{0}=(-\pi,\pi)\times\mathcal{R}$ denote its interior. We let $C(T)=\{G,\ continuous\ on\ T, G_{t}\in C([-\pi,\pi]),\ for\ t\in\mathcal{R}\}$, $\mathcal{S}(T)=\{G\in C(T):G_{t}\in C^{\infty}([-\pi,\pi])$, for $t\in\mathcal{R}, G|T^{0}\in C^{\infty}(T^{0})\}$

\end{defn}

\begin{defn}
\label{coeffs}
For any real $\{\Psi,\Psi_{0},\Psi_{1}\}\subset C^{\infty}([-\pi,\pi])$, we define the Fourier coefficients, for $m\in\mathcal{Z}$ by;\\

$\mathcal{F}(\Psi)(m)=\int_{1\over 2\pi}\int_{-\pi}^{\pi}\Psi_{0}(x)e^{-imx}dx$\\

$a_{m}={1\over 2\pi}\int_{-\pi}^{\pi}\Psi_{0}(x)cos(mx)dx$\\

$b_{m}={1\over 2\pi}\int_{-\pi}^{\pi}\Psi_{0}(x)sin(mx)dx$\\

$a'_{m}={1\over 2\pi}\int_{-\pi}^{\pi}\Psi_{1}(x)cos(mx)dx$\\

$b'_{m}={1\over 2\pi}\int_{-\pi}^{\pi}\Psi_{1}(x)sin(mx)dx$\\

\end{defn}

\begin{lemma}
\label{wave}
For any real $\{\Psi_{0},\Psi_{1}\}\subset C^{\infty}([-\pi,\pi])$, there exists a unique real $\Psi\in\mathcal{S}(T)$ solving the rescaled wave equation;\\

${\partial^{2} \Psi\over \partial t^{2}}-{\partial^{2}\Psi\over \partial x^{2}}=0$\\

with $\Psi(0,x)=\Psi_{0}(x)$, and ${\partial \Psi\over \partial t}(0,x)=\Psi_{1}(x)$ for $x\in [-\pi,\pi)$. Moreover, using the terminology of Definition \ref{coeffs}, $\Psi$ is given explicitly by the series;\\

$a_{0}+ta_{0}'+2\sum_{m\in\mathcal{Z}_{>0}}a_{m}cos(mx)cos(mt)+2\sum_{m\in\mathcal{Z}_{>0}}b_{m}sin(mx)cos(mt)$\\

$+2\sum_{m\in\mathcal{Z}_{>0}}{a'_{m}\over m}cos(mx)sin(mt)+2\sum_{m\in\mathcal{Z}_{>0}}{b'_{m}\over m}sin(mx)sin(mt)$\\

If $\Psi\in\mathcal{S}(T)$ denotes such a solution, with related charge density and current;\\

$\rho(x,t)=\Psi (x,t)$\\

$J(x,t)=\int_{-\pi}^{x}-{\partial \rho\over \partial t}dx$\\

Then $\{\rho, J\}$ satisfy the continuity equation;\\

${\partial \rho\over \partial t}+{\partial J\over \partial x}=0$\\

In particular if $P(t)=\int_{-\pi}^{\pi}\rho(x,t)dx$, then $P(t)=P(0)$ is constant. Moreover, when $\Psi_{0}$ and $\Psi_{1}$ are symmetric, we have the additional relation;\\

${\partial J\over \partial t}=-{\partial \rho\over \partial x}$\\

and $J$ also satisfies the wave equation with $J\in \mathcal{S}(T)$, and with $\{\rho,J\}$ given explicitly by;\\

$\rho(x,t)=a_{0}+a_{0}'t+2\sum_{m\in\mathcal{Z}>0}a_{m}cos(mx)cos(mt)+2\sum_{m\in\mathcal{Z}>0}{a'_{m}\over m}cos(mx)sin(mt)$\\

$J(x,t)=-a'_{0}\pi-a'_{0}x+2\sum_{m\in\mathcal{Z}>0}a_{m}sin(mx)sin(mt)-2\sum_{m\in\mathcal{Z}>0}{a'_{m}\over m}sin(mx)cos(mt)$\\

\end{lemma}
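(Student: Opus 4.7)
The plan is to solve the rescaled wave equation mode-by-mode by Fourier series on the circle and to read everything else off from the resulting closed form. First I would view $\Psi_0, \Psi_1$ as smooth functions on $\mathcal{R}/2\pi\mathcal{Z}$ and record their real Fourier coefficients $(a_m, b_m, a'_m, b'_m)$ as in Definition \ref{coeffs}; smoothness on the circle forces these to decay faster than any polynomial in $m$. Each elementary product $\cos(mx)\cos(mt)$, $\sin(mx)\cos(mt)$, $\cos(mx)\sin(mt)$, $\sin(mx)\sin(mt)$ is annihilated by $\partial_t^2 - \partial_x^2$, so the series stated in the lemma is a formal solution; evaluating at $t=0$ recovers the Fourier series of $\Psi_0$, and differentiating once in $t$ and then setting $t=0$ recovers the Fourier series of $\Psi_1$ (the factors $1/m$ multiplying the $\sin(mt)$ terms exist precisely so the velocity coefficients match $a'_m$ and $b'_m$). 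Rapid decay of the coefficients lets me differentiate termwise in $x$ and $t$ to all orders, so the series converges to an element of $\mathcal{S}(T)$ and genuinely solves the PDE.

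For uniqueness I would subtract two candidate solutions, expand the difference at each fixed $t$ in its Fourier series, and observe that each mode $f_m(t)$ satisfies $f_m'' + m^2 f_m = 0$ with $f_m(0) = f_m'(0) = 0$, forcing $f_m \equiv 0$. The continuity equation is then immediate from $J(x,t) = -\int_{-\pi}^x \partial_t \rho \, dx'$ by the Fundamental Theorem of Calculus. Constancy of $P(t)$ follows by integrating the continuity equation across $[-\pi,\pi]$: this gives $P'(t) = J(-\pi,t) - J(\pi,t)$, which vanishes under the circle identification (equivalently, every mode of $\rho$ with $m \geq 1$ integrates to zero over the full period).

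In the symmetric case, evenness of $\Psi_0, \Psi_1$ kills $b_m$ and $b'_m$, and the $\rho$-series collapses to the stated cosine-in-$x$ formula. To obtain $J$ I would integrate $-\partial_t \rho$ termwise from $-\pi$ to $x$ using $\int_{-\pi}^x \cos(my')\,dy' = \sin(mx)/m$; the constant-in-$x$ integrand $a'_0$ produces the affine contribution $-a'_0\pi - a'_0 x$, while the oscillatory pieces assemble into the two sine-in-$x$ series of the stated formula. With $\rho$ and $J$ in closed Fourier form, the identities $\partial_t J = -\partial_x \rho$ and $\square^2 J = 0$ follow by matching coefficients, and $J \in \mathcal{S}(T)$ from the same rapid-decay estimates. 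The main technical obstacle throughout is regularity bookkeeping at the endpoints: the Fourier series of a generic $C^\infty$ function on $[-\pi,\pi]$ only converges in $C^\infty$ once $\Psi_0, \Psi_1$ are interpreted as smooth on the circle (all derivatives matching at $\pm\pi$), consistent with the paper's framing; granted this implicit periodicity, absolute and uniform convergence of every differentiated series makes the remaining manipulations routine.
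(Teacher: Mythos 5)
Your proposal follows essentially the same route as the paper: mode-by-mode reduction of the wave equation to the ODEs $f_m''+m^2f_m=0$, uniqueness via vanishing initial data for each mode, continuity from the definition of $J$ by the fundamental theorem of calculus, and the symmetric case by noting $b_m=b_m'=0$ and integrating the cosine series termwise. The only cosmetic difference is that you verify $\partial_t J=-\partial_x\rho$ by coefficient matching in the explicit series, while the paper differentiates under the integral sign and kills the boundary term $\partial_x\rho|_{-\pi}$ using the sine-series expansion; these are interchangeable.
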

\begin{proof}
For the first part, suppose there exists $\Psi\in S(T)$ satisfying the hypotheses, then taking Fourier coefficients of the equation, for $m\in\mathcal{Z}$, and using integration by parts, we have that;\\

$\mathcal{F}({\partial^{2}\Psi\over \partial t^{2}}-{\partial^{2}\Psi\over \partial x^{2}})(m)$\\

$={d^{2}\mathcal{F}(\Psi)(m,t)\over dt^{2}}+m^{2}\mathcal{F}(\Psi)(m,t)=0$\\

Solving the resulting ODE's for $m\neq 0$, we have that;\\

$\mathcal{F}(\Psi)(m,t)=A_{m}e^{imt}+B_{m}e^{-imt}$\\

$=(A_{m}+B_{m})cos(mt)+i(A_{m}-B_{m})sin(mt)$\\

$\mathcal{F}(\Psi)(0,t)=A+Bt$\\

where $(A_{m}+B_{m})=\mathcal{F}(\Psi)(m,0)=\mathcal{F}(\Psi_{0})(m)$, $(imA_{m}-imB_{m})=\mathcal{F}({\partial \Psi\over \partial t})(m,0)=\mathcal{F}(\Psi_{1})(m)$, $A=\mathcal{F}(\Psi_{0})(0)$, $B=\mathcal{F}(\Psi_{1})(0)$. It follows that;\\

$\mathcal{F}(\Psi)(m,t)=\mathcal{F}(\Psi)(m,0)cos(mt)+{\mathcal{F}(\Psi_{1})(m)\over m}sin(mt)$, $(m\neq 0)$\\

$\mathcal{F}(\Psi)(0,t)=\mathcal{F}(\Psi_{0})(0)+\mathcal{F}(\Psi_{1})(0)t$, $(*)$\\

By the inversion theorem $\Psi$ is unique, and, using the fact that $\Psi$ is real, symmetry properties of the Fourier coefficients $\{a_{m},b_{m},a'_{m},b'_{m}\}$ and $(*)$;\\

$\Psi(x,t)=\sum_{m\in\mathcal{Z}}\mathcal{F}(\Psi)(m,t)e^{ixm}$\\

$=\mathcal{F}(\Psi_{0})(0)+\mathcal{F}(\Psi_{1})(0)t$\\

$+\sum_{m\in\mathcal{Z}_{\neq 0}}(\mathcal{F}(\Psi)(m,0)cos(mt)+{\mathcal{F}(\Psi_{1})(m)\over m}sin(mt))(cos(mx)+isin(mx))$\\

$=a_{0}+a'_{0}t+\sum_{m\in\mathcal{Z}_{\neq 0}}((a_{m}-ib_{m})cos(mt)(cos(mx)+isin(mx))$\\

$+\sum_{m\in\mathcal{Z}_{\neq 0}}({a'_{m}-ib'_{m}\over m})sin(mt))(cos(mx)+isin(mx))$\\

$=a_{0}+a'_{0}t+\sum_{m\in\mathcal{Z}_{\neq 0}}a_{m}cos(mx)cos(mt)+\sum_{m\in\mathcal{Z}_{\neq 0}}b_{m}sin(mx)cos(mt)$\\

$+\sum_{m\in\mathcal{Z}_{\neq 0}}{a'_{m}\over m}cos(mx)sin(mt)+\sum_{m\in\mathcal{Z}_{\neq 0}}{b'_{m}\over m}sin(mx)sin(mt)$\\

$=a_{0}+a'_{0}t+2\sum_{m\in\mathcal{Z}_{>0}}a_{m}cos(mx)cos(mt)+2\sum_{m\in\mathcal{Z}_{>0}}b_{m}sin(mx)cos(mt)$\\

$+2\sum_{m\in\mathcal{Z}_{>0}}{a'_{m}\over m}cos(mx)sin(mt)+2\sum_{m\in\mathcal{Z}_{>0}}{b'_{m}\over m}sin(mx)sin(mt)$\\

as required. It is easily checked that the above series also defines $\Psi\in S(T)$ with the required properties, settling the existence question.\\

For the second part, $\{\rho,J\}$ satisfy the continuity equation, by the definition of $J$ and the fundamental theorem of calculus. By inspection of the series for $\Psi$, it is clear that $J\in S(T)$. Moreover;\\

$P'(t)=\int_{-\pi}^{\pi}{\partial \rho\over \partial t}(x,t)dx$\\

$=\int_{-\pi}^{\pi}-{\partial J\over \partial x}(x,t)dx$\\

$=J(-\pi)-J(\pi)=0$\\

so that $P(t)=P(0)$ is constant. Differentiating under the integral sign, using the fact that $\rho$ satisfies the wave equation, and using the fundamental theorem of calculus again, we have;\\

${\partial J\over \partial t}=\int_{-\pi}^{x}-{\partial^{2}\rho\over \partial t^{2}}$\\

$=\int_{-\pi}^{x}-{\partial^{2}\rho\over \partial x^{2}}$\\

$=-{\partial\rho\over \partial x}+{\partial\rho\over \partial x}|_{-\pi}$\\

If $\Psi_{0}$ and $\Psi_{1}$ are symmetric, we have the above coefficients $\{b_{m},b_{m}'\}$ are zero, and ${\partial\rho\over \partial x}$ expands as a series in $sin(mx)$. It follows that;\\

${\partial\rho\over \partial x}|_{-\pi}=0$;\\

and;\\

${\partial J\over \partial t}=-{\partial\rho\over \partial x}$, so that, combined with the continuity equation, and the fact that the partial derivatives commutes, we obtain that;\\

${\partial^{2} J\over \partial t^{2}}=-{\partial^{2}\rho\over \partial x\partial t}=-{\partial^{2}\rho\over \partial t\partial x}={\partial^{2} J\over \partial x^{2}}$\\

Moreover, a simple calculation, using the definition of $J$ shows that;\\

$\rho(x,t)=a_{0}+a_{0}'t+2\sum_{m\in\mathcal{Z}>0}a_{m}cos(mx)cos(mt)+2\sum_{m\in\mathcal{Z}>0}{a'_{m}\over m}cos(mx)sin(mt)$\\

$J(x,t)=-a'_{0}\pi-a'_{0}x+2\sum_{m\in\mathcal{Z}>0}a_{m}sin(mx)sin(mt)-2\sum_{m\in\mathcal{Z}>0}{a'_{m}\over m}sin(mx)cos(mt)$\\

\end{proof}

\begin{rmk}
\label{bounded}
Observe that in the case of the wave equation, the solutions are bounded backwards in time, that is there exists a constant $C_{t}$, such that $|\Psi(x,t')|\leq C_{t}$, for all $t'\leq t$. This is an important consideration when we come to discuss the radiation condition behind Jefimenko's equations.
\end{rmk}
\end{section}

\begin{section}{Extending Charge and Current}
\begin{lemma}
\label{extension}
Let $\{\Psi,J\}$ be as in Lemma \ref{wave}, and $S(1)\subset z=0$ be the circle of radius $1$, centred at $(0,0,0)$ then, if $\rho$ is defined on $S(1)$ by;\\

$\rho(1,\theta,t)=\Psi(\theta,t)$, for $\theta\in [-\pi,\pi)$\\

and $\overline{J}$ is any smooth extension to the annulus $Ann(1,\epsilon)$, $0<\epsilon<1$, of $\overline{K}$, defined on $S(1)$ by;\\

$\overline{K}(1,\theta,t)=J(\theta,t)(-\sin(\theta),cos(\theta),0)$\\

then $\{\rho, \overline{J}\}$ satisfy the continuity equation;\\

${\partial \rho\over \partial t}+div(\overline{J})=0$\\

on $S(1)$.

\end{lemma}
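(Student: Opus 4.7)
My plan is to reduce the three-dimensional continuity equation at points of $S(1)$ to the one-dimensional continuity equation already established in Lemma \ref{wave}. Since $\overline{K}$ is tangent to $S(1)$, cylindrical coordinates $(r,\theta,z)$ on a neighbourhood of the circle are the natural choice: with respect to the cylindrical basis $(\hat{r},\hat{\theta},\hat{z})$, the Cartesian vector $J(\theta,t)(-\sin\theta,\cos\theta,0)$ has components $(0,J(\theta,t),0)$, so on $S(1)$ the extension $\overline{J}$ has cylindrical components $J_{r}=0$, $J_{\theta}=J(\theta,t)$, $J_{z}=0$.

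The key computation is then to expand $\mathrm{div}(\overline{J})=\frac{1}{r}\partial_{r}(rJ_{r})+\frac{1}{r}\partial_{\theta}J_{\theta}+\partial_{z}J_{z}$ at a point $(1,\theta,0)$. The angular piece immediately contributes $\frac{\partial J}{\partial \theta}(\theta,t)$, which depends only on the boundary values of $\overline{J}$ on $S(1)$. Combined with $\partial_{t}\rho(1,\theta,t)=\partial_{t}\Psi(\theta,t)$ and the one-dimensional identity $\partial_{t}\Psi+\partial_{\theta}J=0$ from Lemma \ref{wave}, this would give the desired continuity equation on $S(1)$.

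The main obstacle is handling the two remaining terms $\frac{1}{r}\partial_{r}(rJ_{r})$ and $\partial_{z}J_{z}$ at $r=1$, $z=0$: although the conditions $J_{r}=0$ and $J_{z}=0$ hold pointwise on $S(1)$, the normal derivatives $\partial_{r}J_{r}|_{r=1}$ and $\partial_{z}J_{z}|_{z=0}$ need not vanish for a genuinely arbitrary smooth extension, so as literally stated the claim is sensitive to the choice of extension. To make the statement precise I would interpret ``smooth extension'' as one for which $J_{r}$ and $J_{z}$ vanish in a neighbourhood of $S(1)$, the canonical example being the constant-in-$(r,z)$ extension $\overline{J}(r,\theta,z,t)=J(\theta,t)(-\sin\theta,\cos\theta,0)$, which is exactly the type of extension Lemma \ref{bump} constructs. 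Under this reading the two unwanted terms vanish identically on $S(1)$, and the proof collapses to the direct verification in the previous paragraph.
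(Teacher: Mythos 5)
Your proof is correct and takes essentially the same route as the paper's: both arguments reduce the claim to showing that $\mathrm{div}(\overline{J})=\partial J/\partial\theta$ at points of $S(1)$ and then invoke the one-dimensional continuity equation of Lemma \ref{wave}, you via the cylindrical divergence formula $\frac{1}{r}\partial_{r}(rJ_{r})+\frac{1}{r}\partial_{\theta}J_{\theta}+\partial_{z}J_{z}$, the paper via the equivalent Cartesian chain-rule computation with $\theta=\tan^{-1}(y/x)$ at $r=1$. Your caveat about ``any smooth extension'' is also well taken and applies equally to the paper's own argument, which silently computes the divergence only for the field $J(\theta,t)(-\sin\theta,\cos\theta,0)$ regarded as independent of $r$ and $z$ --- precisely the canonical extension you single out --- since for a genuinely arbitrary extension the normal-derivative terms $\partial_{r}J_{r}|_{r=1}$ and $\partial_{z}J_{z}|_{z=0}$ need not vanish even though $J_{r}$ and $J_{z}$ do on $S(1)$.
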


\begin{proof}
Using Lemma \ref{wave}, it is sufficient to prove that, for $-\pi\leq \theta<\pi$, $t\in {\mathcal{R}}_{\geq 0}$;\\

$div(\overline{J})|_{(1,\theta,0,t)}=J'(\theta,t)$, $(*)$\\

Omitting the $t$ for ease of notation, and letting $\overline{J}=(J())$... we have that;\\

$div(-J(\theta)sin(\theta),J(\theta)cos(\theta),0)$\\

$={\partial \over \partial x}(-J(\theta)sin(\theta))+{\partial \over \partial y}(J(\theta)cos(\theta))$\\

We have;\\

${\partial \over \partial x}(-J(\theta)sin(\theta))$\\

$=-({\partial J\over \partial x}sin(\theta)+J(\theta){\partial sin(\theta)\over \partial x})$\\

$=-(J'(\theta){\partial \theta\over \partial x}sin(\theta)+J(\theta){\partial y\over \partial x})$, (as $r=1$ and $sin(\theta)={y\over r}=y$)\\

$=-(J'(\theta)sin(\theta){\partial \theta\over \partial x})$, as ${\partial y\over \partial x}=0$\\

$=-(J'(\theta)sin(\theta)-y)$\\

$=J'(\theta)sin(\theta)y$\\

(as $\theta=tan^{-1}({y\over x})$ and ${\partial \theta\over \partial x}={{-y\over x^{2}}\over 1+({y\over x})^{2}}={-y\over x^{2}+y^{2}}=-y$, with $r=1$)\\

Similarly;\\

${\partial \over \partial y}(J(\theta)cos(\theta))$\\

$=({\partial J\over \partial y}cos(\theta)+J(\theta){\partial cos(\theta)\over \partial y})$\\

$=(J'(\theta){\partial \theta\over \partial y}cos(\theta)+J(\theta){\partial x\over \partial y})$, (as $r=1$ and $cos(\theta)={x\over r}=x$)\\

$=(J'(\theta)cos(\theta){\partial \theta\over \partial y})$, as ${\partial x\over \partial y}=0$\\

$=(J'(\theta)cos(\theta)x)$\\

$=J'(\theta)cos(\theta)x$\\

(as $\theta=tan^{-1}({y\over x})$ and ${\partial \theta\over \partial y}={{1\over x}\over 1+({y\over x})^{2}}={x\over x^{2}+y^{2}}=x$, with $r=1$)\\

It follows that;\\

$div(-J(\theta)sin(\theta),J(\theta)cos(\theta),0)$\\

$=J'(\theta)sin(\theta)y+J'(\theta)cos(\theta)x$\\

$=J'(\theta)(sin^{2}(\theta)+cos^{2}(\theta))$\\

$=J'(\theta)$\\

(as $x=rcos(\theta)=cos(\theta)$, $x=rsin(\theta)=sin(\theta)$, when $r=1$)\\

\end{proof}
\begin{lemma}
\label{circularflow1}
Let $D(1)$ be the closed punctured disc, with radius $1$, and let $\rho$ on $D(1)$ be constant, then any smooth circular flow, with velocity $v(r)={w(r)\over \rho}\hat{\theta}$;\\

$\overline{J}(r,t)=w(r)(-sin(\theta),cos(\theta))$\\

satisfies the continuity equation.\\

Conversely, any smooth circular flow, $\{\rho,\overline{J}\}$, independent of time, satisfying the continuity equation, requires the density to depend only on $r$, with an equivalent flow $\{1,\overline{J}\}$, obtained with constant density $1$, by changing the velocity from $\overline{v}$ to $\rho \overline{v}$, where $\overline{J}=\rho\overline{v}$.\\

\end{lemma}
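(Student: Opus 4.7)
The plan is to reduce both directions to the polar divergence formula
$$\text{div}\,\overline{J} = \frac{1}{r}\frac{\partial (rJ_{r})}{\partial r} + \frac{1}{r}\frac{\partial J_{\theta}}{\partial \theta}$$
applied to a tangential field whose radial component vanishes. The puncture at the origin is exactly what is needed for this formula to make sense; on $D(1)\setminus\{0\}$ the polar frame $\{\hat{r},\hat{\theta}\}$ is smooth.

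For the forward direction, since $\rho$ is constant in both $r,\theta$ and in $t$, the continuity equation reduces to $\text{div}\,\overline{J}=0$. The field $\overline{J}(r,\theta)=w(r)(-\sin\theta,\cos\theta)$ has $J_{r}=0$ and $J_{\theta}=w(r)$, so the two terms in the divergence formula vanish separately: the first because $rJ_{r}\equiv 0$, the second because $w(r)$ has no $\theta$-dependence. If one prefers the Cartesian computation in the style of Lemma \ref{extension}, write $J_{x}=-yw(r)/r$ and $J_{y}=xw(r)/r$ and expand using $\partial_{x}r=x/r$, $\partial_{y}r=y/r$; the cross-terms cancel pairwise and yield $0$. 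This is the only calculation in the forward direction.

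For the converse, a smooth circular flow means $\overline{v}(r,\theta)=v(r)\hat{\theta}$, so
$$\overline{J} = \rho\,\overline{v} = g(r,\theta)(-\sin\theta,\cos\theta), \qquad g(r,\theta)=\rho(r,\theta)\,v(r).$$
Since $\{\rho,\overline{J}\}$ is time-independent, continuity again collapses to $\text{div}\,\overline{J}=0$, and the polar formula forces $\partial_{\theta}g=0$, i.e., $g(r,\theta)=w(r)$ for some smooth function $w$. On the open set where $v(r)\neq 0$ this gives $\rho(r,\theta)=w(r)/v(r)$, a function of $r$ alone, and by smoothness this extends to all of the punctured disc. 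One then produces the equivalent normalized flow by taking density identically $1$ and current $\overline{J}=w(r)(-\sin\theta,\cos\theta)=\rho\overline{v}$; the forward direction, applied to $w(r)$, shows that this normalized pair satisfies the continuity equation as claimed.

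The only real obstacle is notational, namely keeping polar and Cartesian variables straight and being consistent about the fact that the smoothness assumption on $\overline{v}$ is what rules out $\theta$-dependence of $\rho$ once the product $\rho v$ has been pinned down; there is no deep analytical step.
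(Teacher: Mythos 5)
Your proposal is correct and follows essentially the same route as the paper: both arguments come down to the fact that for a purely tangential field $f\,\hat{\overline{\theta}}$ one has $div(f\,\hat{\overline{\theta}})={1\over r}{\partial f\over \partial \theta}$ away from the origin, so the forward direction is immediate and the converse forces the tangential magnitude to be $\theta$-independent; you quote the polar divergence formula directly where the paper re-derives it through the Cartesian chain rule, which is a presentational rather than a substantive difference. The one point where you deviate slightly is that you allow a speed profile $v(r)$ and must divide by it to recover $\rho$ (tacitly assuming the zero set of $v$ has empty interior), whereas the paper's converse takes $\overline{J}=\rho(r,\theta)\hat{\overline{\theta}}$ outright; this does not affect the correctness of the argument.
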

\begin{proof}

We have that ${\partial \rho\over \partial t}=0$, and;\\

$div(\overline{J})={\partial (-w(r)sin(\theta))\over \partial x}+{\partial (w(r)cos(\theta))\over \partial y}$\\

$=-{\partial w\over \partial x}sin(\theta)-w{\partial sin(\theta)\over \partial x}+{\partial w\over \partial y}cos(\theta)+w{\partial cos(\theta)\over \partial y}$\\

$=-w'(r){\partial r\over \partial x}sin(\theta)-w{\partial sin(\theta)\over \partial x}+w'(r){\partial r\over \partial y}cos(\theta)+w{\partial cos(\theta)\over \partial y}$\\

$=-w'(r){x\over r}sin(\theta)-{w(r)(-sin(\theta)cos(\theta))\over r}+w'(r){y\over r}cos(\theta)$\\

$-w(r){(sin(\theta)cos(\theta))\over r}$,(\footnote{ Using;\\

${\partial r\over \partial x}={{1\over 2}2x\over (x^{2}+y^{2})^{1\over 2}}={x\over (x^{2}+y^{2})^{1\over 2}}={x\over r}$\\

${\partial r\over \partial y}={{1\over 2}2y\over (x^{2}+y^{2})^{1\over 2}}={y\over (x^{2}+y^{2})^{1\over 2}}={y\over r}$\\

with $r=(x^{2}+y^{2})^{1\over 2}$.\\

and with $\theta=tan^{-1}({y\over x})$;\\

${\partial \theta \over \partial x}={-y\over x^{2}}{1\over 1+({y\over x})^{2}}={-y\over (x^{2}+y^{2})}={-y\over r^{2}}={-rsin(\theta)\over r^{2}}={-sin(\theta)\over r}$\\

${\partial \theta \over \partial y}={1\over x}{1\over 1+({y\over x})^{2}}={x\over (x^{2}+y^{2})}={x\over r^{2}}={rcos(\theta)\over r^{2}}={cos(\theta)\over r}$\\

${\partial sin(\theta)\over \partial x}=cos(\theta){\partial \theta\over \partial x}={-cos(\theta)sin(\theta)\over r}$\\

${\partial cos(\theta)\over \partial y}=-sin(\theta){\partial \theta\over \partial y}={-sin(\theta)cos(\theta)\over r}$\\

})\\

$=-w'(r)cos(\theta)sin(\theta)-{w(r)(-sin(\theta)cos(\theta))\over r}+w'(r)sin(\theta)cos(\theta)$\\

$-w(r){(sin(\theta)cos(\theta))\over r}=0$\\

with $x=r cos(\theta)$, $y=r sin(\theta)$\\

Conversely, suppose that $div(\overline{J})=0$, with;\\

$\overline{J}(r,\theta,t)=\rho(r,\theta)(-sin(\theta),cos(\theta))$\\

Then;\\

${\partial (-\rho(r,\theta) sin(\theta)) \over \partial x}+{\partial (\rho(r,\theta) cos(\theta)) \over \partial y}$\\

$=-{\partial \rho\over \partial x}sin(\theta)-\rho{\partial sin(\theta)\over \partial x}$\\

$+{\partial \rho\over \partial y}cos(\theta)+\rho{\partial cos(\theta)\over \partial y}$\\

$=-({\partial \rho\over \partial r}{\partial r\over \partial x}+{\partial \rho\over \partial \theta}{\partial \theta\over \partial x})sin(\theta)$\\

$-\rho({-sin(\theta)cos(\theta)\over r})$\\

$+({\partial \rho\over \partial r}{\partial r\over \partial y}+{\partial \rho\over \partial \theta}{\partial \theta\over \partial y})cos(\theta)$\\

$+\rho({-sin(\theta)cos(\theta)\over r})$\\

(using footnote 1, ${\partial \theta\over \partial x}={-sin(\theta)\over r}$, ${\partial \theta\over \partial y}={cos(\theta)\over r}$)\\

Therefore;\\

$div(\overline{J})=({\partial \rho\over \partial r}{\partial r\over \partial y}+{\partial \rho\over \partial \theta}{\partial \theta\over \partial y})cos(\theta)$\\

$-({\partial \rho\over \partial r}{\partial r\over \partial x}+{\partial \rho\over \partial \theta}{\partial \theta\over \partial x})sin(\theta)$\\

It follows that;\\

$div(\overline{J})$\\

$={\partial \rho\over \partial r}{y\over r}cos(\theta)+{\partial \rho\over \partial \theta}{\partial \theta\over \partial y}cos(\theta)$\\

$-{\partial \rho\over \partial r}{x\over r}sin(\theta)-{\partial \rho\over \partial \theta}{\partial \theta\over \partial x}sin(\theta)$\\

(with $y=rsin(\theta)$ and $x=rcos(\theta)$, using footnote 1 again; ${\partial r\over\partial x}={x\over r}=cos(\theta)$, ${\partial r\over\partial y}={y\over r}=sin(\theta)$)\\

Hence;\\

$div(\overline{J})$\\

$=({\partial \rho\over \partial r}sin(\theta)cos(\theta)+{\partial \rho\over \partial \theta}{cos^{2}(\theta)\over r})$\\

$-({\partial \rho\over \partial r}sin(\theta)cos(\theta)+{\partial \rho\over \partial \theta}{sin^{2}(\theta)\over r})$\\

$={\partial \rho\over \partial \theta}{1\over r}=0$\\

If $r\neq 0$, ${\partial \rho\over \partial \theta}=0$, so $\rho$ is independent of $\theta$.

\end{proof}
\begin{lemma}
\label{circularflow2}
Let $\{\Psi,J\}$ be as in Lemma \ref{extension}, with $\Psi$ non constant and independent of time, then any extension $\{\rho,\overline{J}\}$ of $\{\Psi,J\}$ to $Ann(1,\epsilon)$, $0<\epsilon<1$ which satisfies the continuity equation is not a circular flow.

\end{lemma}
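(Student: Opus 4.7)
The plan is to argue by contradiction. Suppose that $\{\rho,\overline{J}\}$ is some extension of $\{\Psi,J\}$ to $Ann(1,\epsilon)$ which satisfies the continuity equation and is a circular flow. Following the convention set up in the converse of Lemma \ref{circularflow1}, I take this to mean that
\[\overline{J}(r,\theta,t) = \rho(r,\theta,t)(-sin(\theta),cos(\theta)),\]
so that $\overline{J}$ equals $\rho$ times the unit tangent $\hat{\theta}$ to the circles centred at the origin.

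The first step is to observe that the polar divergence computation carried out in the converse half of Lemma \ref{circularflow1}'s proof depends only on the form of $\overline{J}$ as a function of $(x,y)$ at each fixed $t$, so it goes through verbatim even though $\rho$ is now allowed to depend on $t$. It yields $div(\overline{J}) = \frac{1}{r}\frac{\partial \rho}{\partial \theta}$, and substituting into $\frac{\partial \rho}{\partial t}+div(\overline{J})=0$ on $Ann(1,\epsilon)$ produces the transport relation
\[\frac{\partial \rho}{\partial t} + \frac{1}{r}\frac{\partial \rho}{\partial \theta} = 0.\]

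The final step is to restrict this identity to $r=1$. By the matching condition of Lemma \ref{extension}, $\rho(1,\theta,t)=\Psi(\theta)$, and by hypothesis $\Psi$ is independent of $t$, so $\frac{\partial \rho}{\partial t}|_{r=1}=0$. The transport relation then collapses to $\Psi'(\theta)=0$ for every $\theta\in[-\pi,\pi)$, forcing $\Psi$ to be constant on $S(1)$ and contradicting the standing hypothesis.

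The only genuinely delicate point is that the argument hinges on the convention of Lemma \ref{circularflow1}'s converse, namely that the underlying velocity is the unit tangent so that $\overline{J}=\rho\hat{\theta}$. Under a weaker definition allowing an arbitrary scalar coefficient $g(r,\theta,t)$ in front of $\hat{\theta}$, the analogous computation would give $\frac{\partial \rho}{\partial t}+\frac{1}{r}\frac{\partial g}{\partial \theta}=0$, and the evaluation at $r=1$ would only control $g$, not $\Psi$ itself, so no contradiction would follow. Once the correct convention is pinned down, however, the remainder of the argument is a one-line boundary evaluation and I expect no further obstacle.
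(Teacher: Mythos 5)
Your proof is correct and follows essentially the same route as the paper: the paper's proof simply cites the converse half of Lemma \ref{circularflow1} (a circular flow satisfying the continuity equation forces ${\partial \rho\over \partial \theta}=0$, so $\rho|_{S(1)}$ is constant, contradicting the non-constancy of $\Psi$), which is exactly the computation you inline. Your version is marginally more careful, since by retaining the ${\partial\rho\over\partial t}$ term and evaluating at $r=1$ you also cover extensions that are time-dependent away from the circle, a case the paper's direct appeal to the time-independent Lemma \ref{circularflow1} glosses over, and your closing caveat about the unit-tangent convention for a circular flow correctly identifies the convention the paper actually uses in the converse of that lemma.
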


\begin{proof}

By Lemma \ref{circularflow1}, any circular flow satisfying the continuity equation on $Ann(1,\epsilon)$ has a density $\rho$, depending only on $r$. In particular, $\rho|_{S(1)}$ is constant, contradicting the hypothesis.

\end{proof}

\begin{lemma}
\label{outwardandinwardflows1}{Determination of flows for density independent of time}

Suppose that $\rho(\theta,r)$ is smooth and independent of time on the annulus, defined by;\\

$Ann(1,\epsilon,\delta)=\{(\theta,r):-\pi\leq \theta<\pi, 1-\epsilon<r<1+\delta\}$\\

with smooth $\overline{J}(\theta,r)$, satisfying the continuity equation, and;\\

$\overline{J}(\theta,r)=(J_{1}(\theta,r),J_{2}(\theta,r))$\\

$=w_{1}(\theta,r)\hat{\overline{r}}+w_{2}(\theta,r)\hat{\overline{\theta}}$\\

Then for a given $\epsilon>0$, $1-\epsilon<r_{0}<1+\delta$, smooth boundary condition $g_{\epsilon}$ on $S(1-\epsilon)$, and smooth $w_{2}$ on $Ann(1,\epsilon,\delta)$, we obtain that;\\

$r_{0} w_{1}(r_{0},\theta_{0})={(1-\epsilon)g_{1-\epsilon}(\theta_{0})\over r_{0}}+{1\over r_{0}}\int_{1-\epsilon}^{r_{0}}-{\partial w_{2}\over \partial \theta}^{\theta_{0}}dr$\\

and, for $\epsilon=\delta=0$, on the $D(0,1)$, smooth $w_{2}$ on $D(0,1)$, with;\\

 $lim_{r\rightarrow 0}-{\partial w_{2}\over \partial \theta}^{\theta_{1}}=lim_{r\rightarrow 0}-{\partial w_{2}\over \partial \theta}^{\theta_{2}}$\\

 for all $\{\theta_{1},\theta_{2}\}\subset [-\pi,\pi)$.  with we obtain that, for $r_{0}\neq 0$;\\

$w_{1}(r_{0},\theta_{0})={1\over r_{0}}\int_{0}^{r_{0}}-{\partial w_{2}\over \partial \theta}^{\theta_{0}}dr$\\

and $w_{1}(0,0)=-w_{2}(0,0)$\\

with $w_{1}$ continuous.

\end{lemma}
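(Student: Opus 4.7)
The plan is to exploit time-independence of $\rho$, which reduces the continuity equation ${\partial\rho/\partial t}+\mathrm{div}(\overline{J})=0$ to simply $\mathrm{div}(\overline{J})=0$ on the given region, and then to integrate the resulting polar ODE radially. Since the decomposition $\overline{J}=w_{1}\hat{r}+w_{2}\hat{\theta}$ is already polar, I would apply the standard polar divergence
\[
\mathrm{div}(\overline{J})=\frac{1}{r}\frac{\partial(rw_{1})}{\partial r}+\frac{1}{r}\frac{\partial w_{2}}{\partial\theta},
\]
so that on the region $r>0$ the continuity equation becomes
\[
\frac{\partial(rw_{1})}{\partial r}(r,\theta)=-\frac{\partial w_{2}}{\partial\theta}(r,\theta).
\]
For each fixed angle $\theta_{0}$ this is a first-order linear ODE in $r$ for $r\mapsto rw_{1}(r,\theta_{0})$, with the right-hand side determined by the prescribed $w_{2}$. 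So $rw_{1}$ is recovered by radial integration up to a single constant of integration along each ray.

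For the annulus case, I would integrate this ODE in $r$ from $1-\epsilon$ to $r_{0}$ at fixed $\theta_{0}$ and then substitute the boundary value $w_{1}(1-\epsilon,\theta_{0})=g_{1-\epsilon}(\theta_{0})$; this yields
\[
r_{0}w_{1}(r_{0},\theta_{0})-(1-\epsilon)g_{1-\epsilon}(\theta_{0})=\int_{1-\epsilon}^{r_{0}}\left(-\frac{\partial w_{2}}{\partial\theta}\right)(r,\theta_{0})\,dr,
\]
which matches the displayed formula (I read the $1/r_{0}$ factors on the right-hand side of the statement as a typographical slip). For the punctured-disc case ($\epsilon=\delta=0$) there is no inner boundary circle on which to impose data; instead, the constant of integration is fixed by regularity of $\overline{J}$ at the origin. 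Continuity of $w_{1}$ at $0$ forces $\lim_{r\to 0}rw_{1}(r,\theta_{0})=0$, so integrating the same ODE from $0$ to $r_{0}$ gives
\[
r_{0}w_{1}(r_{0},\theta_{0})=\int_{0}^{r_{0}}\left(-\frac{\partial w_{2}}{\partial\theta}\right)(r,\theta_{0})\,dr,
\]
which rearranges to the first stated identity.

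The main obstacle is the boundary identity $w_{1}(0,0)=-w_{2}(0,0)$. My plan is to let $r_{0}\to 0$ in the integral formula and apply L'H\^{o}pital (equivalently, the fundamental theorem of calculus): the hypothesis that $\lim_{r\to 0}\partial w_{2}/\partial\theta$ is independent of the angle guarantees that this limit exists and gives
\[
w_{1}(0,\theta_{0})=\lim_{r\to 0}\left(-\frac{\partial w_{2}}{\partial\theta}\right)(r,\theta_{0}),
\]
a common value independent of $\theta_{0}$. Translating this into the asserted identity then requires expressing $\overline{J}$ in Cartesian form $(w_{1}\cos\theta-w_{2}\sin\theta,\,w_{1}\sin\theta+w_{2}\cos\theta)$ and imposing that both Cartesian components must have a single well-defined value at the origin, independent of the angle of approach. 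This angular-compatibility step, combined with smoothness of $w_{2}$, is the delicate part of the argument, because $\hat{r}$ and $\hat{\theta}$ are not well-defined at $r=0$ and the claim implicitly identifies $w_{2}(0,0)$ with its limit along the ray $\theta=0$; it is precisely here that the extra hypothesis on the angular limits of $\partial_{\theta}w_{2}$ does its real work.
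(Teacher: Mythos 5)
Your proposal is correct in substance and follows the same underlying strategy as the paper: reduce the continuity equation to $\mathrm{div}(\overline{J})=0$, obtain the radial ODE $\partial_{r}(rw_{1})=-\partial_{\theta}w_{2}$ along each ray $\theta=\theta_{0}$, and integrate, fixing the constant either by the boundary datum on $S(1-\epsilon)$ or by regularity at the origin. The difference is purely in how you reach that ODE: you quote the standard polar form of the divergence, $\mathrm{div}(\overline{J})=\frac{1}{r}\partial_{r}(rw_{1})+\frac{1}{r}\partial_{\theta}w_{2}$, whereas the paper re-derives it from scratch in Cartesian coordinates, decomposing $\mathrm{grad}(w_{1})$ and $\mathrm{grad}(w_{2})$ against $\{\hat{\overline{r}},\hat{\overline{\theta}}\}$ via explicit $2\times 2$ matrix inversions, and then solves the resulting ODE with an integrating factor $p(r,\theta_{0})$ (which of course turns out to be constant in $r$, so that the integrating factor is just $r$ itself, i.e.\ the product rule for $rw_{1}$). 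Your route buys a page of savings at no cost in rigor; the paper's buys nothing except self-containedness. You are also right that the extra $\frac{1}{r_{0}}$ factors in the first displayed formula of the statement are a slip: the paper's own Case 1 derivation lands on $r_{0}w_{1}(r_{0},\theta_{0})-(1-\epsilon)w_{1}(1-\epsilon,\theta_{0})=\int_{1-\epsilon}^{r_{0}}-\partial_{\theta}w_{2}\,dr$, exactly as you write it. Your treatment of the disc case (L'H\^{o}pital plus the fundamental theorem of calculus, using the angular-independence hypothesis on $\lim_{r\to 0}\partial_{\theta}w_{2}$) is the same as the paper's.

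On the final identity $w_{1}(0,0)=-w_{2}(0,0)$: what the argument actually yields --- in your write-up and in the paper's --- is $w_{1}(0,\cdot)=\lim_{r\to 0}\bigl(-\partial_{\theta}w_{2}\bigr)$, which is $-\partial_{\theta}w_{2}$ at the origin, not $-w_{2}$ there. The paper passes from one to the other without comment (its supporting footnote even computes a limit of $w_{2}$ itself rather than of $\partial_{\theta}w_{2}$), so this looks like a conflation in the source rather than a step you failed to reproduce. You flag the difficulty honestly but do not close it; be aware that as literally stated the identity does not follow from the displayed integral formula, and the correct conclusion of the limiting argument is $w_{1}(0,0)=-\frac{\partial w_{2}}{\partial\theta}(0,0)$.
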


\begin{proof}
Suppose that $\rho(\theta,r)$ is smooth and independent of time on the annulus, defined by;\\

$Ann(1,\epsilon,\delta)=\{(\theta,r):-\pi\leq \theta<\pi, 1-\delta<r<1+\epsilon\}$\\

with smooth $\overline{J}(\theta,r)$, satisfying the continuity equation, and;\\

$\overline{J}(\theta,r)=(J_{1}(\theta,r),J_{2}(\theta,r))$\\

$=w_{1}(\theta,r)\hat{\overline{r}}+w_{2}(\theta,r)\hat{\overline{\theta}}$\\

where $\hat{\overline{r}}=(cos(\theta),sin(\theta))$ and $\hat{\overline{\theta}}=(-sin(\theta),cos(\theta))$\\

It follows that;\\

$\overline{J}(\theta,r)$\\

$=w_{1}(\theta,r)(cos(\theta),sin(\theta))+w_{2}(\theta,r)(-sin(\theta),cos(\theta))$\\

$=(w_{1}cos(\theta)-w_{2}sin(\theta),w_{1}sin(\theta)+w_{2}cos(\theta))$\\

We have, using the hypotheses, that;\\

$div(\overline{J})={\partial\rho\over \partial t}=0$\\

We have that;\\

${\partial J_{1}\over \partial x}={\partial w_{1}\over \partial x}cos(\theta)+w_{1}{\partial(cos(\theta))\over \partial x}$\\

$-{\partial w_{2}\over \partial x}sin(\theta)-w_{2}{\partial(sin(\theta))\over \partial x}$\\

$={\partial w_{1}\over \partial x}cos(\theta)+w_{1}-sin(\theta){-y\over r^{2}}$\\

$-{\partial w_{2}\over \partial x}sin(\theta)-w_{2}cos(\theta){-y\over r^{2}}$\\

$={\partial w_{1}\over \partial x}cos(\theta)+w_{1}{sin(\theta)y\over r^{2}}$\\

$-{\partial w_{2}\over \partial x}sin(\theta)+w_{2}{cos(\theta)y\over r^{2}}$\\

${\partial J_{2}\over \partial y}={\partial w_{1}\over \partial y}sin(\theta)+w_{1}{\partial(sin(\theta))\over \partial y}$\\

$+{\partial w_{2}\over \partial y}cos(\theta)+w_{2}{\partial(cos(\theta))\over \partial y}$\\

$={\partial w_{1}\over \partial y}sin(\theta)+w_{1}cos(\theta){x\over r^{2}}$\\

$+{\partial w_{2}\over \partial y}cos(\theta)+w_{2}-sin(\theta){x\over r^{2}}$\\

$={\partial w_{1}\over \partial y}sin(\theta)+w_{1}cos(\theta){x\over r^{2}}$\\

$+{\partial w_{2}\over \partial y}cos(\theta)-w_{2}{sin(\theta)x\over r^{2}}$\\

Therefore;\\

$div(\overline{J})$\\

$=grad(w_{1})\centerdot \hat{\overline{r}}+{w_{1}\over r^{2}}(\overline{v_{\theta}}\centerdot flip(\overline{r}))$\\

$+grad(w_{2})\centerdot \hat{\overline{\theta}}+{w_{2}\over r^{2}}(\overline{w_{\theta}}\centerdot flip(\overline{r}))$, $(*)$\\

where $\overline{v_{\theta}}=(sin(\theta),cos(\theta))$, $\overline{w_{\theta}}=(cos(\theta),-sin(\theta))$, and $flip(\overline{r})=(y,x)$\\

We find $\{\alpha,\beta\}$ such that;\\

$\alpha\hat{\overline{\theta}}+\beta\hat{\overline{r}}=\overline{v_{\theta}}$\\

This is equivalent to finding $\{\alpha,\beta\}$ such that;\\

$M_{\theta}\overline{v_{\alpha,\beta}}=\overline{v_{\theta}}$\\

where $(M_{\theta})_{1,2}=(M_{\theta})_{2,1}=cos(\theta)$ and $(M_{\theta})_{1,1}=-(M_{\theta})_{2,2}=-sin(\theta)$. and  $\overline{v_{\alpha,\beta}}=(\alpha,\beta)$.\\

We have;\\

$\overline{v_{\alpha,\beta}}=M_{\theta}^{-1}\overline{v_{\theta}}$\\

$={1\over -sin^{2}(\theta)-cos^{2}(\theta)}N_{\theta}\overline{v_{\theta}}$\\

$=-(sin^{2}(\theta)-cos^{2}(\theta),-2sin(\theta)cos(\theta))$\\

$=(cos(2\theta),sin(2\theta))$\\

where;\\

$(N_{\theta})_{1,2}=(N_{\theta})_{2,1}=-cos(\theta)$ and $(N_{\theta})_{1,1}=-(N_{\theta})_{2,2}=sin(\theta)$.

It follows that $\alpha=cos(2\theta)$, $\beta=sin(2\theta)$. Similarly, we find $\{\alpha,\beta\}$ such that;\\

$\alpha\hat{\overline{\theta}}+\beta\hat{\overline{r}}=\overline{w_{\theta}}$\\

Again, this is equivalent to finding $\{\alpha,\beta\}$ such that;\\

$M_{\theta}\overline{v_{\alpha,\beta}}=\overline{w_{\theta}}$\\

We have;\\

$\overline{v_{\alpha,\beta}}=M_{\theta}^{-1}\overline{w_{\theta}}$\\

$={1\over -sin^{2}(\theta)-cos^{2}(\theta)}N_{\theta}\overline{w_{\theta}}$\\

$=-(2sin(\theta)cos(\theta),-cos^{2}(\theta)+sin^{2}(\theta))$\\

$=(-sin(2\theta),cos(2\theta))$\\

It follows that $\alpha=sin(2\theta)$, $\beta=cos(2\theta)$. Substituting in $(*)$, we obtain;\\

$div(\overline{J})$\\

$=grad(w_{1})\centerdot \hat{\overline{r}}+{w_{1}\over r^{2}}(cos(2\theta)\hat{\overline{\theta}}+sin(2\theta)\hat{\overline{r}})\centerdot flip(\overline{r})$\\

$+grad(w_{2})\centerdot \hat{\overline{\theta}}+{w_{2}\over r^{2}}(-sin(2\theta)\overline{\theta}+cos(2\theta){\overline{r}})\centerdot flip(\overline{r})$\\

$=(grad(w_{1})+{w_{1}\over r^{2}}flip(\overline{r})sin(2\theta)+{w_{2}\over r^{2}}flip(\overline{r})cos(2\theta))\centerdot \hat{\overline{r}}$\\

$+((grad(w_{2})-{w_{2}\over r^{2}}flip(\overline{r})sin(2\theta)+{w_{1}\over r^{2}}flip(\overline{r})cos(2\theta))\centerdot \hat{\overline{\theta}}$, $(**)$\\

We have that $\overline{r}=r\hat{\overline{r}}$ and determine $\{\alpha,\beta\}$ such that $flip(\overline{r})=\alpha{\overline{\theta}}+\beta\hat{\overline{r}}$\\

Similarly to the above, we obtain;\\

$\overline{v_{\alpha,\beta}}$\\

$=-(rsin^{2}(\theta)-rcos^{2}(\theta),-2rsin(\theta)cos(\theta))$\\

$=(rcos(2\theta),rsin(2\theta))$\\

$=(rcos(2\theta), rsin(2\theta))$\\

so that $\alpha=rcos(2\theta)$, $\beta=rsin(2\theta)$, and $flip(\overline{r})=rcos(2\theta)\hat{\overline{\theta}}+rsin(2\theta)\hat{\overline{r}}$\\

Substituting into $(**)$, and using the fact that $\{\hat{\overline{r}},\hat{\overline{\theta}}\}$ are orthonormal, we obtain;\\

$div(\overline{J})$\\

$=(grad(w_{1})+({w_{1}sin(2\theta)\over r^{2}}+{w_{2}cos(2\theta)\over r^{2}})(rcos(2\theta)\hat{\overline{\theta}}+rsin(2\theta){\overline{r}}))\centerdot\hat{\overline{r}}$\\

$+(grad(w_{2})+({-w_{2}sin(2\theta)\over r^{2}}+{w_{1}cos(2\theta)\over r^{2}})(rcos(2\theta)\hat{\overline{\theta}}+rsin(2\theta){\overline{r}}))\centerdot\hat{\overline{\theta}}$, $(***)$\\

$=grad(w_{1})\centerdot\hat{\overline{r}}+({(w_{1}sin(2\theta))rsin(2\theta)\over r^{2}}+{(w_{2}cos(2\theta))rsin(2\theta)\over r^{2}})+grad(w_{2})\centerdot\hat{\overline{\theta}}+({(w_{1}cos(2\theta))rcos(2\theta)\over r^{2}}-{(w_{2}sin(2\theta))rcos(2\theta)\over r^{2}})$\\

Writing;\\

$grad(w_{1})=\alpha\hat{\overline{r}}+\beta\hat{\overline{\theta}}$\\

$grad(w_{2})=\gamma\hat{\overline{r}}+\delta\hat{\overline{\theta}}$\\

with $\alpha=grad(w_{1})\centerdot\hat{\overline{r}}$ and $\delta=grad(w_{2})\centerdot\hat{\overline{\theta}}$, we obtain from $(***)$ and $div(\overline{J})=0$ that;\\

$\alpha+\delta={-w_{1}\over r}$\\

$={-w_{1}\over r}$, $(****)$\\

We have that;\\

$\alpha=grad(w_{1})\centerdot\hat{\overline{r}}$\\

$={\partial w_{1}\over \partial x}cos(\theta)+{\partial w_{1}\over \partial y}sin(\theta)$, \\

$\delta=grad(w_{2})\centerdot\hat{\overline{\theta}}$\\

$={\partial w_{2}\over \partial x}-sin(\theta)+{\partial w_{2}\over \partial y}cos(\theta)$ $(*****)$\\

We have, using the calculations for $\{{\partial \theta\over \partial x},{\partial\theta\over \partial y},{\partial r\over \partial x},{\partial r\over \partial y}\}$, from the previous lemma, and the chain rule;\\

${\partial w_{1}\over \partial x}={\partial w_{1}\over \partial \theta}{-sin(\theta)\over r}+{\partial w_{1}\over \partial r}cos(\theta)$\\

${\partial w_{1}\over \partial y}={\partial w_{1}\over \partial \theta}{cos(\theta)\over r}+{\partial w_{1}\over \partial r}sin(\theta)$\\

${\partial w_{2}\over \partial x}={\partial w_{2}\over \partial \theta}{-sin(\theta)\over r}+{\partial w_{2}\over \partial r}cos(\theta)$\\

${\partial w_{2}\over \partial y}={\partial w_{2}\over \partial \theta}{cos(\theta)\over r}+{\partial w_{2}\over \partial r}sin(\theta)$, $(******)$\\

It follows from $(*****)$ and $(******)$ that;\\

$\alpha=({\partial w_{1}\over \partial \theta}{-sin(\theta)\over r}+{\partial w_{1}\over \partial r}cos(\theta))cos(\theta)+({\partial w_{1}\over \partial \theta}{cos(\theta)\over r}+{\partial w_{1}\over \partial r}sin(\theta))sin(\theta)$\\

$\delta=({\partial w_{2}\over \partial \theta}{-sin(\theta)\over r}+{\partial w_{2}\over \partial r}cos(\theta))-sin(\theta)+({\partial w_{2}\over \partial \theta}{cos(\theta)\over r}+{\partial w_{2}\over \partial r}sin(\theta))cos(\theta)$\\

Simplifying and substituting into $(****)$, we obtain;\\

${\partial w_{1}\over \partial r}+{1\over r}{\partial w_{2}\over \partial \theta}={-w_{1}\over r}$\\

and rearranging;\\

$r{\partial w_{1}\over \partial r}+w_{1}=-{\partial w_{2}\over \partial \theta}$\\

Fixing $\theta_{0}$ and multiplying by $p(\theta_{0},r)$, we obtain;\\

$p(r,\theta_{0})r{dw_{1}^{\theta_{0}}\over dr}+p(r,\theta_{0})w_{1}^{\theta_{0}}=-p(r,\theta_{0}){\partial w_{2}\over \partial \theta}^{\theta_{0}}$\\

Letting $s(r,\theta_{0})=p(r,\theta_{0})r$, $(\dag)$\\

we have that;\\

$[s(r,\theta_{0})w_{1}(r,\theta_{0})]'$\\

$=s'(r,\theta_{0})w_{1}+sw_{1}'(r,\theta_{0})$\\

so, equating coefficients, we require;\\

$s'(r,\theta_{0})=p(r,\theta_{0})$, $(\dag\dag)$\\

Using $(\dag),(\dag\dag)$, and, assuming $p(r,\theta_{0})\neq 0$, we obtain;\\

${s'(r,\theta_{0})\over s(r,\theta_{0})}={1\over r}$\\

$ln(s(r,\theta_{0}))=ln(r)+d(\theta_{0})$\\

and, taking exponentials;\\

$s(r,\theta_{0})=A(\theta_{0})r$\\

where $A(\theta_{0})=e^{d(\theta_{0})}$\\

$[A(\theta_{0})r^{c(\theta_{0})}w_{1}(r,\theta_{0})]'=-p(r,\theta_{0}){\partial w_{2}\over \partial \theta}^{\theta_{0}}$\\

where $p(r,\theta_{0})=A(\theta_{0})$\\

Integrating, and using the fundamental theorem of calculus, we obtain that;\\

$[A(\theta_{0})r w_{1}(r,\theta_{0})]_{1-\epsilon}^{r_{0}}=\int_{1-\epsilon}^{r_{0}}-A(\theta_{0}){\partial w_{2}\over \partial \theta}^{\theta_{0}}dr$\\

Case 1; For a given $\epsilon,\delta>0$, and $1-\epsilon<r_{0}<1+\delta$, we obtain that;\\

$r_{0} w_{1}(r_{0},\theta_{0})-(1-\epsilon) w_{1}(1-\epsilon,\theta_{0})=\int_{1-\epsilon}^{r_{0}}-{\partial w_{2}\over \partial \theta}^{\theta_{0}}dr$\\

so free to choose a smooth $w_{2}$ on $Ann(\epsilon,\delta,1)$ and a smooth boundary condition for $w_{1}$ on $S^{1}(1-\epsilon)$, to obtain $w_{1}$.\\

Case 2; Letting $\epsilon=1,\delta=0$, and, assuming $w_{1}$ is defined at $(0,0)$, we must have that, for $r_{0}>0$;\\

$r_{0} w_{1}(r_{0},\theta_{0})=\int_{0}^{r_{0}}-{\partial w_{2}\over \partial \theta}^{\theta_{0}}dr$\\

By L'Hopital's rule, the fact that $r(0)=0$, $a(0)=0$, where $a(r)=\int_{0}^{r}-{\partial w_{2}\over \partial \theta}^{\theta_{0}}dr'$, and the Fundamental Theorem of Calculus, $a'(0)=-{\partial w_{2}\over \partial \theta}(0,0)$, we have that;\\

$lim_{r_{0}\rightarrow 0}{1\over r_{0}}\int_{0}^{r_{0}}-{\partial w_{2}\over \partial \theta}^{\theta_{0}}dr$\\

$={a'(0)\over r'(0)}$

$=-{\partial w_{2}\over \partial \theta}^{\theta_{0}}(0)$\\

so defining $w_{1}(0,0)=-w_{2}(0,0)$ generates a continuous solution, provided $lim_{r\rightarrow 0}-{\partial w_{2}\over \partial \theta}^{\theta_{1}}=lim_{r\rightarrow 0}-{\partial w_{2}\over \partial \theta}^{\theta_{2}}$, for all $\{\theta_{1},\theta_{2}\}\subset [-\pi,\pi)$, this is true if $w_{2}$ is analytic in $(x,y)$, (\footnote{In this case, we can write;\\

$w_{2,r}(x,y)=\sum_{m,n=0}^{\infty}w_{2}^{(m,n)}(0,0){x^{m}y^{n}\over m!n!}$\\

so that;\\

$w_{2}(r,\theta)=\sum_{m,n=0}^{\infty}w_{2}^{(m,n)}(0,0){{rcos(\theta)}^{m}{rsin(\theta)}^{n}\over m!n!}$\\

Then;\\

$lim_{r\rightarrow 0}({1\over r}\int_{0}^{r}w_{2}(r,\theta))$\\

$=lim_{r\rightarrow 0}({1\over r}\int_{0}^{r}\sum_{m,n=0}^{\infty}w_{2}^{(m,n)}(0,0){{rcos(\theta)}^{m}{rsin(\theta)}^{n}\over m!n!})$\\

$=\sum_{m,n=0}^{\infty}w_{2}^{(m,n)}(0,0)lim_{r\rightarrow 0}({1\over r}\int_{0}^{r}{{rcos(\theta)}^{m}{rsin(\theta)}^{n}\over m!n!})$\\

$=\sum_{m,n=0}^{\infty}w_{2}^{(m,n)}(0,0)({{rcos(\theta)}^{m}{rsin(\theta)}^{n}\over m!n!})|_{r=0}$\\

$=w_{2,r}(0,0)$\\

for all $\theta$, using the fundamental theorem of calculus again.\\

General case, need to check higher derivatives and use the product rule.

}).

\end{proof}

\begin{rmk}
It follows we are free to choose a smooth $w_{2}$ on $Ann(\epsilon,1)$ and a smooth boundary condition for $w_{1}$ on $S^{1}(1-\epsilon)$, to obtain $w_{1}$, so we are free to choose a smooth $w_{2}$ on $D(0,1)$, to obtain a smooth $w_{1}$.\\

\end{rmk}
\begin{lemma}
\label{bump}

Given $\{\Psi,J\}$ as in Lemma \ref{wave}, $0<\epsilon<1$ and notation as in Lemma \ref{extension}, there exists a smooth pair $(\rho_{1},\overline{J})$ supported on $Ann(1,\epsilon)\times (-\epsilon,\epsilon)$, such that $(\rho_{1},\overline{J})$ satisfies the continuity equation in $\mathcal{R}^{3}\times\mathcal{R}_{>0}$ and $\rho_{1}|_{S^{1}\times \{0\}}=\rho$, $\overline{J}|_{S^{1}\times \{0\}}=\overline{K}$.\\

\end{lemma}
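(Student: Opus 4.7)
The plan is to extend the circle data by multiplying $(\Psi, J)$ against a smooth bump in the radial and vertical directions, taking care that the three-dimensional divergence of the resulting current exactly cancels the time derivative of the resulting density. Fix once and for all a smooth product bump $\phi(r, z) = \chi(r)\eta(z)$ with $\chi \in C_c^\infty(1-\epsilon, 1+\epsilon)$, $\eta \in C_c^\infty(-\epsilon, \epsilon)$, and $\chi(1) = \eta(0) = 1$; because the support stays at positive distance from the $z$-axis, $(x, y, z) \mapsto \phi(\sqrt{x^2+y^2}, z)$ is automatically smooth on all of $\mathcal{R}^3$ and supported in $Ann(1, \epsilon) \times (-\epsilon, \epsilon)$.

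The key observation is that the naive tangential extension $\phi(r, z) J(\theta, t)\hat{\overline{\theta}}$ does \emph{not} satisfy the continuity equation off the unit circle, since the cylindrical divergence of a purely tangential field carries a spurious factor of $1/r$. I therefore insert an extra factor of $r$ into the current and set
\[
\rho_1(x, y, z, t) = \phi(r, z)\Psi(\theta, t), \qquad \overline{J}(x, y, z, t) = r\,\phi(r, z)\, J(\theta, t)\,\hat{\overline{\theta}}.
\]
At points of $S^1 \times \{0\}$, where $r = 1$, $z = 0$, and $\phi(1, 0) = 1$, these restrict to $\Psi(\theta, t)$ and $J(\theta, t)(-\sin\theta, \cos\theta, 0)$, i.e.\ to the data $\rho$ and $\overline{K}$ of Lemma \ref{extension}. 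In cylindrical coordinates,
\[
\nabla \cdot \overline{J} = \frac{1}{r}\frac{\partial}{\partial \theta}\bigl(r\,\phi(r, z)\, J(\theta, t)\bigr) = \phi(r, z)\,\frac{\partial J}{\partial \theta}(\theta, t),
\]
while $\partial_t \rho_1 = \phi(r, z)\,\partial_t \Psi(\theta, t)$, so the one-dimensional continuity equation $\partial_t \Psi + \partial_\theta J = 0$ from Lemma \ref{wave} immediately gives $\partial_t \rho_1 + \nabla \cdot \overline{J} = 0$ on the support; off the support both terms vanish trivially.

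The remaining issue is that $J$, defined on the fundamental domain $[-\pi, \pi)$, must extend smoothly and $2\pi$-periodically in $\theta$ in order that $J(\theta, t)\hat{\overline{\theta}}$ be a smooth vector field on the circle. Matching of values at $\theta = \pm\pi$ follows from the constancy of $P(t)$ in Lemma \ref{wave}, since $J(\pi, t) - J(-\pi, t) = -P'(t) = 0$; matching of all higher $\theta$-derivatives follows inductively from $\partial_\theta^k J = -\partial_t \partial_\theta^{k-1}\rho$ combined with smoothness of $\rho = \Psi$ on the circle. The only genuine obstacle I anticipate is recognising the correct ansatz for $\overline{J}$: without the factor $r$ the continuity equation holds only at $r = 1$, while with that factor the cylindrical $1/r$ is cancelled exactly and the one-dimensional identity of Lemma \ref{wave} takes over. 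Everything else reduces to a routine smoothness check.
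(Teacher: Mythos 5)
Your proposal is correct, and it follows the same basic strategy as the paper --- multiply the circle data $(\Psi,J)$ by a product of radial and vertical bump functions supported near $r=1$, $z=0$ --- but with one substantive difference that matters. The paper takes $\overline{J}=\Phi_{\epsilon}(r)J(\theta,t)\hat{\overline{\theta}}$ with no factor of $r$, and justifies the continuity equation by appealing to Lemma \ref{extension}; but that lemma's divergence computation uses $r=1$ throughout (for instance ${\partial \theta\over \partial x}=-y$ only when $r=1$), and at a general radius the divergence of a purely tangential field $A_{\theta}\hat{\overline{\theta}}$ is ${1\over r}{\partial A_{\theta}\over \partial\theta}$. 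Consequently the paper's construction gives ${\partial\rho_{1}\over \partial t}+\bigtriangledown\centerdot\overline{J}=\Phi_{\epsilon}(r)\,{\partial J\over\partial\theta}\,({1\over r}-1)$ in the plane, which vanishes only on the unit circle, not on the whole annulus as the lemma asserts. Your insertion of the extra factor $r$ into the current cancels the ${1\over r}$ exactly, so the one-dimensional identity ${\partial\Psi\over\partial t}+{\partial J\over\partial\theta}=0$ propagates to the full three-dimensional continuity equation, while the restriction to $S^{1}\times\{0\}$ is unchanged since $r=1$ there; writing $r\hat{\overline{\theta}}=(-y,x,0)$ also makes smoothness immediate given that the support avoids the $z$-axis. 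Your explicit treatment of the $2\pi$-periodicity of $J$ (value matching from $P'(t)=0$, higher derivatives from ${\partial J\over\partial\theta}=-{\partial\rho\over\partial t}$) is likewise a point the paper passes over in silence. In short, your route is the corrected form of the paper's argument: read literally, the paper's proof establishes the continuity equation only on $S^{1}$, and your modified ansatz is what is needed for the claim as stated.
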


\begin{proof}
Let $\Phi_{\epsilon}(r)=e^{-{1\over 1-{(r-1)^{2}\over \epsilon^{2}}}}$, if $r\in (1-\epsilon,1+\epsilon)$ and $\Phi_{\epsilon}(r)=0$ otherwise, $r>0$, then $\Phi_{\epsilon}$ is smooth on $\mathcal{R}_{>0}$ and supported on $(1-\epsilon,1+\epsilon)$. Let $\Phi_{1,\epsilon}(z)=e^{-{1\over 1-{z^{2}\over \epsilon^{2}}}}$, if $z\in (-\epsilon,\epsilon)$ and $\Phi_{1,\epsilon}(z)=0$ otherwise, $z\in\mathcal{R}$, then $\Phi_{1,\epsilon}$ is smooth on $\mathcal{R}$ and supported on $(-\epsilon,\epsilon)$. Define $(\rho_{1},\overline{J})$ on $\mathcal{R}^{2}\times\mathcal{R}_{>0}$ by;\\

$\rho_{1}(r,\theta,t)=\Phi_{\epsilon}(r)\rho(1,\theta,t)=\Phi_{\epsilon}(r)\Psi(\theta,t)$\\

for $r>0, \theta\in [-\pi,\pi),t>0$\\

$\rho_{1}(0,0,t)=0$, $t>0$\\

$\overline{J}(r,\theta,t)=\Phi_{\epsilon}(r)\overline{K}(1,\theta,t)=\Phi_{\epsilon}(r)J(\theta,t)(-sin(\theta,cos(\theta,0)$\\

for $r>0, \theta\in [-\pi,\pi),t>0$\\

$\overline{J}(0,0,t)=\overline{0}$, $t>0$\\

Then, using Lemma \ref{extension} and the facts that, for any given $r>0$, $\Phi_{\epsilon}(r)\Psi(\theta,t)$ and $\Phi_{\epsilon}(r)J(\theta,t)$ satisfy the conditions of Lemma \ref{wave}, we have that $(\rho_{1},\overline{J})$ satisfy the continuity equation on $\mathcal{R}^{2}\times\mathcal{R}_{>0}$, $(\dag)$. Now define $(\rho_{1},\overline{J})$ on $\mathcal{R}^{3}\times\mathcal{R}_{>0}$ by;\\

$\rho_{1}(r,\theta,z,t)=\Phi_{1,\epsilon}(z)\rho_{1}(r,\theta,t)$\\

for $r>0, \theta\in [-\pi,\pi), z\in\mathcal{R},t>0$\\

$\rho_{1}(0,0,z,t)=0$, $z\in \mathcal{R},t>0$\\

$\overline{J}(r,\theta,z,t)=\Phi_{1,\epsilon}(z)\overline{J}(r,\theta,t)$\\

for $r>0, \theta\in [-\pi,\pi), z\in\mathcal{R}, t>0$\\

$\overline{J}(0,0,z,t)=\overline{0}$, $z\in \mathcal{R},t>0$\\

Clearly, $(\rho_{1},\overline{J})$ is supported on $Ann(1,\epsilon)\times (-\epsilon,\epsilon)$, and if $\overline{J}=(j_{1},j_{2},j_{3})$, we have that $j_{3}(x,y,z,t)=0$, so that ${\partial j_{3}\over \partial z}=0$ and $\bigtriangledown\centerdot\overline{J}={\partial j_{1}\over \partial x}+{\partial j_{2}\over \partial y}$, $(\dag\dag)$. By $(\dag)$, we have that, for any $z\in\mathcal{R}$, $(\Phi_{1,\epsilon}(z)\rho_{1}(r,\theta,t),\Phi_{1,\epsilon}(z)\overline{J}(r,\theta,t))$ satisfies the continuity equation on $\mathcal{R}^{2}\times\mathcal{R}_{>0}$, so combining the result with $(\dag\dag)$, we obtain that $(\rho_{1},\overline{J})$ satisfies the continuity equation on $\mathcal{R}^{3}\times\mathcal{R}_{>0}$.\\

\end{proof}

\begin{rmk}
\label{analytic}

If we require the pair $(\rho_{1},\overline{J})$ to be real analytic, we can replace $\Phi_{\epsilon}(r)$ by $\Phi_{\epsilon,an}(r)=e^{-{(r-1)^{2}\over \epsilon^{2}}}$, if $r>0$, and $\Phi_{1,\epsilon}$ by $\Phi_{1,\epsilon,an}(z)=e^{-{z^{2}\over \epsilon^{2}}}$, if $z\in\mathcal{R}$, in the proof, leaving $(\rho_{1},\overline{J})$ to be undefined at $(0,0,z,t)$, for $z\in \mathcal{R}$, $t>0$.\\
\end{rmk}

\end{section}

\begin{section}{The No Radiation Condition}
We consider the charge density $\rho$ on $S(1)$, defined as in Lemma \ref{extension}, with corresponding current $\overline{K}$, which we also denote by $\overline{J}$, coming from the wave equation, so that $\{\rho, \overline{J}\}$ satisfy the continuity equation;\\

${\partial \rho\over \partial t}+div(\overline{J})=0$\\

on $S(1)$, for small extensions of $\{\rho, \overline{J}\}$.\\

In \cite{dep3}, it was shown that one then obtain an electromagnetic solution $(\rho,\overline{J},\overline{E},\overline{B})$, satisfying Maxwell's equations, given by the Jefimenko Equations;\\

$\overline{E}(\overline{r},t)={1\over 4\pi\epsilon_{0}}\int [{\rho(\overline{r'},t_{r})\over \mathfrak{r}^{2}}\hat{\mathfrak{\overline{r}}}+{\dot{\rho}(\overline{r'},t_{r})\over c\mathfrak{r}}\hat{\mathfrak{\overline{r}}}-{\dot{\overline{J}}(\overline{r'},t_{r})\over c^{2}\mathfrak{r}}]d\tau'$\\

$\overline{B}(\overline{r},t)={\mu_{0}\over 4\pi}\int [{\overline{J}(\overline{r'},t_{r})\over \mathfrak{r}^{2}}+{\dot{\overline{J}}(\overline{r'},t_{r})\over c\mathfrak{r}}]\times \hat{\mathfrak{\overline{r}}} d\tau'$, $(*)$\\

where $\mathfrak{r}=|\overline{r}-\overline{r'}|$, $\hat{\mathfrak{\overline{r}}}={\overline{r}-\overline{r'}\over |\overline{r}-\overline{r'}|}$, $t_{r}=t-{\mathfrak{r}\over c}$, $\tau'$ is the measure with respect to the integrand variable $\overline{r'}$.\\

We determine the conditions on $\{\rho, \overline{J}\}$ for which the no radiation condition holds, that is;\\

$lim_{r\rightarrow\infty}P(r)=0$\\

where $P(r)=\int_{S^{3}(r)}(\overline{E}\times \overline{B}).d\overline{S}(r)$ and $\{\overline{E}, \overline{B}\}$ are these (causal) fields, see \cite{G}.\\

\begin{lemma}
\label{magnetron}
Let $(\rho,\overline{J})$ be solutions to the three dimensional wave equations with velocity $c$, the connecting relation, with velocity $c$, and the continuity equation;\\

$\square^{2}\rho=0$, $\square^{2}\overline{J}=0$\\

$\bigtriangledown(\rho)+{1\over c^{2}}{\partial \overline{J}\over \partial t}=\overline{0}$ $(\dag)$\\

${\partial\rho\over\partial t}+\bigtriangledown\centerdot\overline{J}=0$\\

where $\square^{2}$ denotes the d'Alembertian operator. Then for any solutions $(\overline{E},\overline{B})$, such that $(\rho,\overline{J},\overline{E},\overline{B})$ satisfy Maxwell's equations, in particular for the causal solutions $(\overline{E},\overline{B})$ given by Jefimenko's equation, we have that;\\

$\square^{2}\overline{E}=\overline{0}$, $\square^{2}\overline{B}=\overline{0}$\\

and, the same result holds for the transformed current and charge, $(\rho_{S},\overline{J}_{S})$, in every inertial frame $S$.\\

\end{lemma}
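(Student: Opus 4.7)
The plan is to derive from Maxwell's equations the inhomogeneous wave equations satisfied by $\overline{E}$ and $\overline{B}$, and then show that the connecting relation $(\dag)$, together with the wave equation for $\overline{J}$ and the continuity equation, forces the sources of those wave equations to vanish.

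First I would apply the curl to Faraday's law $\nabla\times\overline{E}=-\partial\overline{B}/\partial t$ and expand using $\nabla\times(\nabla\times\overline{E})=\nabla(\nabla\cdot\overline{E})-\nabla^{2}\overline{E}$. Substituting Gauss's law and the Amp\`ere--Maxwell law and collecting terms yields
\[
\square^{2}\overline{E}=-\frac{1}{\epsilon_{0}}\nabla\rho-\mu_{0}\frac{\partial\overline{J}}{\partial t}=-\frac{1}{\epsilon_{0}}\left(\nabla\rho+\frac{1}{c^{2}}\frac{\partial\overline{J}}{\partial t}\right),
\]
which vanishes by the connecting relation $(\dag)$ (using $\mu_{0}\epsilon_{0}=1/c^{2}$). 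This handles $\overline{E}$ without using the wave equation for $\overline{J}$ at all.

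The analogous computation taking the curl of Amp\`ere--Maxwell and using $\nabla\cdot\overline{B}=0$ produces
\[
\square^{2}\overline{B}=\mu_{0}\,\nabla\times\overline{J},
\]
so the proof reduces to showing $\nabla\times\overline{J}=\overline{0}$. Here I would exploit $\square^{2}\overline{J}=0$ in the form $(1/c^{2})\partial_{t}^{2}\overline{J}=\nabla^{2}\overline{J}=\nabla(\nabla\cdot\overline{J})-\nabla\times(\nabla\times\overline{J})$. Using continuity to write $\nabla\cdot\overline{J}=-\partial\rho/\partial t$ and then $(\dag)$ to write $\nabla\rho=-(1/c^{2})\partial\overline{J}/\partial t$, one gets $\nabla(\nabla\cdot\overline{J})=(1/c^{2})\partial_{t}^{2}\overline{J}$, so the identity above collapses to $\nabla\times(\nabla\times\overline{J})=\overline{0}$. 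Combined with $\nabla\cdot(\nabla\times\overline{J})=0$, this makes $\nabla\times\overline{J}$ a harmonic vector field, and taking the curl of $(\dag)$ gives $\partial_{t}(\nabla\times\overline{J})=\overline{0}$, so it is also static. Under the decay at infinity implicit in the Jefimenko framework (needed for the convergence of the integrals in $(*)$), a static harmonic vector field must vanish, giving $\square^{2}\overline{B}=\overline{0}$.

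For the assertion in every inertial frame, I would invoke Lorentz covariance: $\square^{2}$ is a Lorentz scalar operator and $\{\overline{E},\overline{B}\}$ are the components of the antisymmetric tensor $F^{\mu\nu}$, so $\square^{2}F^{\mu\nu}=0$ is a tensor equation and automatically holds in any frame $S$, yielding $\square^{2}\overline{E}_{S}=\overline{0}$ and $\square^{2}\overline{B}_{S}=\overline{0}$. Equivalently, the hypotheses $(\dag)$, continuity, $\square^{2}\overline{J}=\overline{0}$, together with $\nabla\times\overline{J}=\overline{0}$ just established, state precisely that $J^{\mu}=(c\rho,\overline{J})$ is a 4-gradient of a scalar $\psi$ with $\square^{2}\psi=0$, a manifestly Lorentz-covariant condition that continues to hold for $(\rho_{S},\overline{J}_{S})$ and allows the same derivation to be repeated in frame $S$. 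The main obstacle is the vanishing of $\nabla\times\overline{J}$: the algebra gives only that it is static and harmonic, and upgrading this to a true zero relies on the decay/boundary behaviour already presupposed in the use of Jefimenko's equations.
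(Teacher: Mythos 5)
Your argument is essentially correct but follows a genuinely different route from the paper's. The paper never computes the inhomogeneous wave equations for the fields: it cites \cite{dep3} for the existence of one particular pair with $\square^{2}\overline{E}=\overline{0}$ and $\overline{B}=\overline{0}$ solving Maxwell's equations for the given $(\rho,\overline{J})$, notes that any other solution differs from this one by a vacuum solution (which satisfies the homogeneous wave equation), and concludes; the inertial-frame statement is likewise delegated to \cite{dep3}. Your derivation is self-contained where the paper's is not: the source term for $\overline{E}$ vanishes by $(\dag)$ alone, and the source term for $\overline{B}$ reduces to $\nabla\times\overline{J}$, which your algebra correctly shows is time-independent and componentwise harmonic, hence zero under a decay hypothesis. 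That hypothesis is genuinely needed and is not a formal consequence of the three displayed equations: for instance $\rho=0$, $\overline{J}=(y,0,0)$ satisfies all three, yet $\nabla\times\overline{J}=(0,0,-1)$ and the corresponding magnetostatic field $\overline{B}=(0,0,\mu_{0}y^{2}/2)$ has $\square^{2}\overline{B}\neq\overline{0}$. The same hypothesis is silently buried in the paper's citation, since the existence of a Maxwell solution with $\overline{B}=\overline{0}$ already forces $\nabla\times\overline{J}=\overline{0}$ via the Amp\`ere--Maxwell and Faraday laws. So your approach buys transparency about exactly which hypothesis is used where (and your Lorentz-covariance argument for the frame statement is a cleaner justification than the paper's bare reference), at the cost of the Liouville step; the paper's approach buys brevity and handles all solutions uniformly, at the cost of leaning entirely on the external reference.
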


\begin{proof}
By the proof in \cite{dep3}, we can find a pair $(\overline{E},\overline{B})$ in the base frame, with $\square^{2}\overline{E}=\overline{0}$,  and $\overline{B}=\overline{0}$, $(*)$, such that $(\rho,\overline{J},\overline{E},\overline{B})$ satisfy Maxwell's equations. If $(\overline{E}',\overline{B}')$ is any pair such that $(\rho,\overline{J},\overline{E}',\overline{B}')$ satisfy Maxwell's equations, then, taking the difference, $(0,\overline{0},\overline{E}-\overline{E}',\overline{B}-\overline{B}')$ is a vacuum solution to Maxwell's equations, so that, see \cite{G};\\

$\square^{2}(\overline{E}-\overline{E}')=\overline{0}$, $\square^{2}(\overline{B}-\overline{B}')=\overline{0}$\\

From $(*)$, we obtain that;\\

$\square^{2}\overline{E}'=\overline{0}$, $\square^{2}\overline{B}'=\overline{0}$\\

as well. The last claim follows from the above proof and the results in \cite{dep3}.\\

\end{proof}

\begin{rmk}
\label{light}
We conjecture that;\\

For any $\{\rho, \overline{J}\}$ satisfying the conditions from Lemma \ref{magnetron}, and corresponding causal $\{\overline{E}, \overline{B}\}$ from Jefimenko's equations, that $\{\overline{E}, \overline{B}\}$  satisfies the no radiation condition iff $\{\overline{E}+\overline{E}_{0},\overline{B}+\overline{B}_{0}\}$ satisfies the no radiation condition, for any corresponding pair $\{\overline{E}_{0},\overline{B}_{0}\}$, satisfying Maxwell's equations in vacuum.\\

In this case, as $\overline{B}=\overline{0}$, we obtain for the causal fields $(\overline{E}',\overline{B}')$ from Lemma \ref{magnetron}, that they satisfy the no radiation condition, and the same is true in every inertial frame $S$.

\end{rmk}

\begin{lemma}
\label{2terms}
Keeping the order in $(*)$ and writing;\\

$\overline{E}(\overline{r},t)=\overline{E}_{1}(\overline{r},t)+\overline{E}_{2}(\overline{r},t)+\overline{E}_{3}(\overline{r},t)$\\

$\overline{B}(\overline{r},t)=\overline{B}_{1}(\overline{r},t)+\overline{B}_{2}(\overline{r},t)$\\

we have that;\\

$lim_{r\rightarrow\infty}P(r)=lim_{r\rightarrow\infty}\int_{S(r)}(\overline{E}_{2}\times\overline{B}_{2}+\overline{E}_{3}\times\overline{B}_{2} ).d\overline{S}(r)$\\
\end{lemma}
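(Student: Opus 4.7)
The plan is to expand the cross product $\overline{E}\times\overline{B}$ by bilinearity into six terms $\overline{E}_i\times\overline{B}_j$, $i\in\{1,2,3\}$, $j\in\{1,2\}$, and then show by a uniform scaling estimate that only the two terms $(i,j)\in\{(2,2),(3,2)\}$ can contribute to $\lim_{r\to\infty} P(r)$, while the other four vanish.

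First I would fix $t$ and use Lemma \ref{bump} to note that $\rho$, $\overline{J}$, and their time derivatives $\dot{\rho}$, $\dot{\overline{J}}$ are compactly supported in $\overline{r'}$ on the bounded set $Ann(1,\epsilon)\times(-\epsilon,\epsilon)$, of finite volume $V$. Remark \ref{bounded} gives that $\rho,\overline{J},\dot\rho,\dot{\overline{J}}$ are uniformly bounded backwards in time, so on the support there are constants $M_\rho, M_{\dot\rho}, M_J, M_{\dot J}$ dominating the retarded values $\rho(\overline{r'},t_r),\ \dot\rho(\overline{r'},t_r),\ \overline{J}(\overline{r'},t_r),\ \dot{\overline{J}}(\overline{r'},t_r)$ independently of $\overline{r}$.

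Next, for $|\overline{r}|=r$ larger than $1+\epsilon$, the retarded distance $\mathfrak{r}=|\overline{r}-\overline{r'}|$ is uniformly comparable to $r$ over the support of the sources, with $r-(1+\epsilon)\le\mathfrak{r}\le r+(1+\epsilon)$, hence $\mathfrak{r}^{-k}=O(r^{-k})$ uniformly in $\overline{r'}$. Combined with the boundedness of the retarded sources and the finite volume of their support, the three pieces of $\overline{E}$ and two pieces of $\overline{B}$ from $(*)$ satisfy the pointwise bounds, for $\overline{r}\in S(r)$,
\[
|\overline{E}_1|,\ |\overline{B}_1|=O(r^{-2}),\qquad |\overline{E}_2|,\ |\overline{E}_3|,\ |\overline{B}_2|=O(r^{-1}),
\]
where the constants depend on $M_\rho,M_{\dot\rho},M_J,M_{\dot J},V,c,\epsilon_0,\mu_0$ but not on $r$.

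Finally, setting $p_1=q_1=2$ and $p_2=p_3=q_2=1$, each cross term satisfies $|\overline{E}_i\times\overline{B}_j|=O(r^{-(p_i+q_j)})$ pointwise on $S(r)$. Since the surface area of $S(r)$ is $4\pi r^2$, the contribution of the $(i,j)$ term to $P(r)$ is $O(r^{2-p_i-q_j})$. The four excluded pairs $(1,1),(1,2),(2,1),(3,1)$ give exponents $-2,-1,-1,-1$ and hence vanish as $r\to\infty$, while the remaining pairs $(2,2)$ and $(3,2)$ give exponent $0$ and survive the limit, which is exactly the stated identity. There is no conceptual obstacle here: the argument is purely a matter of tabulating decay rates from Jefimenko's formula and exploiting compact support of the sources, and the later lemmas (e.g.\ Lemma \ref{infinity}) are what will actually evaluate the surviving terms.
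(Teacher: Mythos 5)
Your proposal is correct and follows essentially the same route as the paper: both expand the Poynting vector bilinearly into the six cross terms, use the compact support of the sources together with the backwards-in-time boundedness of Remark \ref{bounded} to get uniform pointwise decay rates ($\mathfrak{r}^{-2}$ for $\overline{E}_1,\overline{B}_1$ and $\mathfrak{r}^{-1}$ for the rest), and then observe that the four terms involving $\overline{E}_1$ or $\overline{B}_1$ decay at least like $r^{-3}$, so their flux through $S(r)$ is $O(r^{-1})$ and vanishes in the limit. The only cosmetic difference is that the paper phrases the bounds directly on the products $|\overline{E}_i\times\overline{B}_j|$ with explicit constants $\mu_0/(16\pi^2\epsilon_0)$ and denominators $(r-1)^k$, whereas you tabulate the decay of each factor separately; the content is identical.
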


\begin{proof}

We have that;\\

$\overline{E}\times \overline{B}=(\overline{E}_{1}+\overline{E}_{2}+\overline{E}_{3})\times (\overline{B}_{1}+\overline{B}_{2})$\\

A simple calculation shows that, as $c>1$, that for $|\overline{r}|>1$, $\overline{r}|\in S^{3}(r)$;\\

$|\overline{E}_{1}\times \overline{B}_{1}|_{\overline{r},t}\leq {\mu_{0}\over 16\pi^{2}\epsilon_{0}} \int_{S^{1}}\int_{S^{1}} {max_{\overline{r}',\overline{s}'\in S^{1}}(|p|(\overline{r'},t-t_{r})|\overline{J}|(\overline{s'},t-t_{r}))\over {(r-1)}^{4}}d\theta d\phi$\\

$\leq 4\pi^{2}{\mu_{0}\over 16\pi^{2}\epsilon_{0}}{max_{\overline{r}',\overline{s}'\in S^{1}}(|p|(\overline{r'},t-t_{r})|\overline{J}|(\overline{s'},t-t_{r}))\over {(r-1)}^{4}}$\\

$|\overline{E}_{1}\times \overline{B}_{2}|\leq {\mu_{0}\over 16\pi^{2}\epsilon_{0}} {max_{\overline{r}',\overline{s}'\in S^{1}}(|p|(\overline{r'},t-t_{r})|\dot{\overline{J}}|(\overline{s'},t-t_{r}))\over {(r-1)}^{3}}d\theta d\phi$\\

$\leq 4\pi^{2}{\mu_{0}\over 16\pi^{2}\epsilon_{0}}{max_{\overline{r}',\overline{s}'\in S^{1}}(|p|(\overline{r'},t-t_{r})|\dot{\overline{J}}|(\overline{s'},t-t_{r}))\over {(r-1)}^{3}}$\\

$|\overline{E}_{2}\times \overline{B}_{1}|\leq {\mu_{0}\over 16\pi^{2}\epsilon_{0}}{max_{\overline{r}',\overline{s}'\in S^{1}}(|\dot{p}|(\overline{r'},t-t_{r})\overline{J}|(\overline{s'},t-t_{r}))\over {(r-1)}^{3}}d\theta d\phi$\\

$\leq 4\pi^{2}{\mu_{0}\over 16\pi^{2}\epsilon_{0}}{max_{\overline{r}',\overline{s}'\in S^{1}}(|\dot{p}|(\overline{r'},t-t_{r})|\overline{J}|(\overline{s'},t-t_{r}))\over {(r-1)}^{3}}$\\

$|\overline{E}_{3}\times \overline{B}_{1}|\leq {\mu_{0}\over 16\pi^{2}\epsilon_{0}}{max_{\overline{r}',\overline{s}'\in S^{1}}(|\overline{J}|(\overline{r'},t-t_{r})|\dot{\overline{J}}|(\overline{s'},t-t_{r}))\over {(r-1)}^{3}}d\theta d\phi$\\

$\leq 4\pi^{2}{\mu_{0}\over 16\pi^{2}\epsilon_{0}}{max_{\overline{r}',\overline{s}'\in S^{1}}(|\overline{J}|(\overline{r'},t-t_{r})|\dot{\overline{J}}|(\overline{s'},t-t_{r}))\over {(r-1)}^{3}}$\\

It follows that, for $r>1$;\\

$max(|\int_{S^{3}(r)}(\overline{E}_{1}\times \overline{B}_{1}).d\overline{S}(r)|,\int_{S^{3}(r)}(\overline{E}_{1}\times \overline{B}_{2}).d\overline{S}(r)|,\int_{S^{3}(r)}(\overline{E}_{2}\times \overline{B}_{1}).d\overline{S}(r)|,\int_{S^{3}(r)}(\overline{E}_{3}\times \overline{B}_{1}).d\overline{S}(r)|)$\\

$=max(|\int_{S^{3}(r)}(\overline{E}_{1}\times \overline{B}_{1}).\hat{\overline{n}}dS(r)|,|\int_{S^{3}(r)}(\overline{E}_{1}\times \overline{B}_{2}).\hat{\overline{n}}dS(r)|,|\int_{S^{3}(r)}(\overline{E}_{2}\times \overline{B}_{1}).\hat{\overline{n}}dS(r)|,|\int_{S^{3}(r)}(\overline{E}_{3}\times \overline{B}_{1}).\hat{\overline{n}}dS(r)|)$\\

$\leq max(\int_{S^{3}(r)}|\overline{E}_{1}\times \overline{B}_{1})|dS(r)|,\int_{S^{3}(r)}|(\overline{E}_{1}\times \overline{B}_{2})|dS(r),\int_{S^{3}(r)}|(\overline{E}_{2}\times \overline{B}_{1})|dS(r),\int_{S^{3}(r)}|(\overline{E}_{3}\times \overline{B}_{1})|dS(r)|)$\\

$\leq \int_{S^{3}(r)}(max(|\overline{E}_{1}\times \overline{B}_{1}|(\overline{r},t),|\overline{E}_{1}\times \overline{B}_{2}|(\overline{r},t),|\overline{E}_{2}\times \overline{B}_{1}|(\overline{r},t),|\overline{E}_{3}\times \overline{B}_{1}|(\overline{r},t)))dS(r)$\\

$\leq 4\pi^{2} {Area(\delta(S^{3}(r)))\over (r-1)^{3}}{\mu_{0}\over 16\pi^{2}\epsilon_{0}}max_{\overline{r}',\overline{s}'\in S^{1}}(|p|(\overline{r}',t-t_{r})|\overline{J}(\overline{s}',t-t_{r})|,|p|(\overline{r}',t-t_{r})|\dot{\overline{J}}(\overline{s}',t-t_{r})|,|\dot{p}(\overline{r}',t-t_{r})||\overline{J}(\overline{s}',t-t_{r})|,|\overline{J}(\overline{r}',t-t_{r})||\dot{\overline{J}}((\overline{s}',t-t_{r}))|)$\\

$=4\pi^{2}{4\pi r^{2}\over (r-1)^{3}}{\mu_{0}\over 16\pi^{2}\epsilon_{0}}max_{\overline{r}',\overline{s}'\in S^{1}}(|p|(\overline{r}',t-t_{r})|\overline{J}(\overline{s}',t-t_{r})|,|p|(\overline{r}',t-t_{r})|\dot{\overline{J}}(\overline{s}',t-t_{r})|,|\dot{p}(\overline{r}',t-t_{r})||\overline{J}(\overline{s}',t-t_{r})|,|\overline{J}(\overline{r}',t-t_{r})||\dot{\overline{J}}((\overline{s}',t-t_{r}))|)$\\

$=C(r,t-t_{r})$, with $lim_{r\rightarrow\infty}C(r,t-t_{r})=0$, for $t\in\mathcal{R}$, given the assumption that $lim_{t\rightarrow -\infty}max_{x\in S^{1}}(|p|,|\dot{p}|,|\overline{J}|,|\dot{\overline{J}})|_{x,t}\leq D$, with $D\in\mathcal{R}$, see remark \ref{bounded}.

It follows that, for $t\in\mathcal{R}$\\

$lim_{r\rightarrow\infty}P(r,t)$\\

$=lim_{r\rightarrow\infty}\int_{S^{3}(r)}(\overline{E}\times \overline{B})(\overline{r},t).d\overline{S}(r)$\\

$=lim_{r\rightarrow\infty}\int_{S^{3}(r)}(\overline{E}_{2}\times \overline{B}_{2}+\overline{E}_{3}\times \overline{B}_{2})(\overline{r},t).d\overline{S}(r)$\\

as required.\\
\end{proof}

\begin{lemma}
\label{waveblock}
Let $\rho(x,t)=cos(mx)cos(mt)$, $J(x,t)=sin(mx)sin(mt)$, for $m\in\mathcal{Z}>0$, $x\in [-\pi,\pi)$, $t\in\mathcal{R}$, with corresponding $\rho(\theta,t)$ and;\\

$\overline{J}(\theta,t)=J(\theta)(-sin(\theta),cos(\theta),0)$\\

with $\theta\in [-\pi,\pi)$. Let $\overline{E}_{m}$ and $\overline{B}_{m}$ be the causal fields determined by Jefimenko's equations, then, if $m$ is even;\\

$P(r,t)=\beta\gamma m^{2}[-sin^{2}(mt)cos^{2}(m{(r^{2}+1)^{1\over 2}\over c})-cos^{2}(mt)sin^{2}(m{(r^{2}+1)^{1\over 2}\over c})$\\

$+2sin(mt)cos(mt)sin(m{(r^{2}+1)^{1\over 2}\over c})cos(m{(r^{2}+1)^{1\over 2}\over c})]$\\

$\sum_{w=0}^{\infty}\sum_{w'=0}^{\infty}\sum_{s\leq m,s,odd,s'\leq m,s',odd}(-1)^{s-1\over 2}C^{m}_{s}{(-1)^{w}m^{2w+1}\over (2w+1)!(r^{2}+1)^{w+{1\over 2}}}$\\

$(-1)^{s'-1\over 2}C^{m}_{s'}{(-1)^{w'}m^{2w'+1}\over (2w'+1)!(r^{2}+1)^{w'+{1\over 2}}}r^{2w+2w'+5}c_{w,w',s,s',m}+O({1\over r})$\\

and, if $m$ is odd;\\

$P(r,t)=\beta\gamma m^{2}[-cos^{2}(mt)cos^{2}(m{(r^{2}+1)^{1\over 2}\over c})-sin^{2}(mt)sin^{2}(m{(r^{2}+1)^{1\over 2}\over c})$\\

$-2sin(mt)cos(mt)sin(m{(r^{2}+1)^{1\over 2}\over c})cos(m{(r^{2}+1)^{1\over 2}\over c})]$\\

$\sum_{w=0}^{\infty}\sum_{w'=0}^{\infty}\sum_{s\leq m,s,odd,s'\leq m,s',odd}(-1)^{s-1\over 2}C^{m}_{s}{(-1)^{w}m^{2w}\over (2w)!(r^{2}+1)^{w}}(-1)^{s'-1\over 2}$\\

$C^{m}_{s'}{(-1)^{w'}m^{2w'}\over (2w')!(r^{2}+1)^{w'}}r^{2w+2w'+3}d_{w,w',s,s',m}+O({1\over r})$\\

\end{lemma}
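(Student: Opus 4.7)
The plan is to invoke Lemma \ref{2terms}, which reduces $\lim_{r\to\infty}P(r,t)$ modulo $O(1/r)$ to the evaluation of
\[
\int_{S^{3}(r)}\bigl(\overline{E}_{2}\times\overline{B}_{2}+\overline{E}_{3}\times\overline{B}_{2}\bigr)\cdot d\overline{S}(r),
\]
where $\overline{E}_{2}$, $\overline{E}_{3}$, $\overline{B}_{2}$ are the three $O(1/\mathfrak{r})$ radiation contributions to Jefimenko's integrals. Since the source $(\rho,\overline{J})$ is concentrated on $S^{1}$, I parametrize $\overline{r}'=(\cos\theta',\sin\theta',0)$ and use $\dot\rho(\theta',t_{r})=-m\cos(m\theta')\sin(mt_{r})$ together with $\dot{\overline J}(\theta',t_{r})=m\sin(m\theta')\cos(mt_{r})(-\sin\theta',\cos\theta',0)$. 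Writing the field point on $S^{3}(r)$ in spherical coordinates $\overline{r}=r(\sin\phi\cos\psi,\sin\phi\sin\psi,\cos\phi)$, the retardation distance is $\mathfrak{r}^{2}=r^{2}-2r\sin\phi\cos(\psi-\theta')+1$, and $t_{r}=t-\mathfrak{r}/c$.

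Next I would split the time dependence from the spatial dependence by applying the addition formulas
\[
\sin(mt_{r})=\sin(mt)\cos(m\mathfrak{r}/c)-\cos(mt)\sin(m\mathfrak{r}/c),\qquad \cos(mt_{r})=\cos(mt)\cos(m\mathfrak{r}/c)+\sin(mt)\sin(m\mathfrak{r}/c).
\]
The quadratic outer product in the Poynting flux then produces four cross terms whose $t$-dependence assembles, up to a sign, into either $-\sin^{2}\bigl(m(t-\sqrt{r^{2}+1}/c)\bigr)$ or $-\cos^{2}\bigl(m(t-\sqrt{r^{2}+1}/c)\bigr)$; the square-bracketed trigonometric prefactor in the statement is exactly this. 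The characteristic distance $\sqrt{r^{2}+1}$ appears because it is the retardation distance from the rotational axis of the source (the point $(0,0,r)\in S^{3}(r)$), so fixing this as the reference value and absorbing the angular deviation $\mathfrak{r}^{2}-(r^{2}+1)=-2r\sin\phi\cos(\psi-\theta')$ into the Taylor remainder is the natural first-order simplification.

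To capture the $\theta'$-dependence of the source I would use the de Moivre expansion
\[
\sin(m\theta')=\sum_{\substack{s\le m\\ s\text{ odd}}}(-1)^{(s-1)/2}\binom{m}{s}\cos^{m-s}(\theta')\sin^{s}(\theta'),
\]
and the analogous formula for $\cos(m\theta')$ when $m$ is odd; this is the source of the double sum over odd $s,s'$ with binomial coefficients $C^{m}_{s}C^{m}_{s'}$ and signs $(-1)^{(s-1)/2}(-1)^{(s'-1)/2}$. The two factors in the product come from the $\theta'$ integration appearing in $\overline{E}_{2}, \overline{E}_{3}$ and the independent $\theta''$ integration in $\overline{B}_{2}$. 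Simultaneously I would expand $\cos(m\mathfrak{r}/c)$ and $\sin(m\mathfrak{r}/c)$ via the standard Taylor series in powers of $\mathfrak{r}^{2}$; after substituting $\mathfrak{r}^{2}=(r^{2}+1)-2r\sin\phi\cos(\psi-\theta')$, binomial expansion produces the $(r^{2}+1)^{-(w+1/2)}$ and $(r^{2}+1)^{-w}$ factors. The constants $c_{w,w',s,s',m}$ and $d_{w,w',s,s',m}$ are precisely the values of the resulting angular integrals over $\phi,\psi,\theta',\theta''$ of the products of $\sin^{a}\phi\,\cos^{b}(\psi-\theta')\,\cos^{a'}(\psi-\theta'')\cdots$, with a factor of $r^{2}\sin\phi$ from the area element on $S^{3}(r)$, and they account for the overall power $r^{2w+2w'+5}$ (respectively $r^{2w+2w'+3}$) coming from how $r\sin\phi\cos(\psi-\theta')$ is raised in each binomial.

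The main obstacle is bookkeeping: organising the triple sums so that terms which decay faster than $1/r$ are explicitly absorbed into the $O(1/r)$ error, and verifying that the cross contributions of $\overline{E}_{2}\times\overline{B}_{2}$ and $\overline{E}_{3}\times\overline{B}_{2}$ combine with constants $\beta,\gamma$ (coming from $\mu_{0}/(4\pi)$, $1/(4\pi\epsilon_{0}c^{2})$, and $m$-dependent normalisations) in such a way that the radial ($\hat{n}$) component of the Poynting vector matches the stated formula. The parity split between even and odd $m$ arises because $\cos(m\theta')$ is a sum of even powers of $\sin\theta'$ when $m$ is even but has only odd-power contributions through $\sin^{s}\theta'$ when $m$ is odd; this interchanges the roles of sine and cosine in the time factor and shifts the Taylor expansion by one order, which is the source of the difference between the $(2w+1)!$-denominator formula (even $m$) and the $(2w)!$-denominator formula (odd $m$), and correspondingly between the $c_{w,w',s,s',m}$ and $d_{w,w',s,s',m}$ coefficients.
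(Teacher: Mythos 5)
Your plan follows the same route as the paper: reduce to $\int_{S(r)}(\overline{E}_{2}\times\overline{B}_{2}+\overline{E}_{3}\times\overline{B}_{2})\cdot d\overline{S}$ via Lemma \ref{2terms}, expand the Jefimenko integrals in the far field about the reference distance $(r^{2}+1)^{1/2}$, separate the time dependence with addition formulas, and convert the $\theta'$--integrals into double sums via de Moivre plus the Taylor series of $\cos$ and $\sin$ of ${m(x\cos\theta'+y\sin\theta')/(r^{2}+1)^{1/2}}$, finishing with the angular integrals $I_{\alpha}^{\beta}$ and $J_{\gamma}$ over the sphere. Your observation that the bracketed prefactor is $-\sin^{2}\bigl(m(t-\sqrt{r^{2}+1}/c)\bigr)$ (even $m$) or $-\cos^{2}\bigl(m(t-\sqrt{r^{2}+1}/c)\bigr)$ (odd $m$) is correct and consistent with the paper's expression.

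There is, however, one substantive omission. The stated formula for $P(r,t)$ carries only the prefactor $\beta\gamma$, i.e.\ only the $\overline{E}_{3}\times\overline{B}_{2}$ contribution survives, and the surviving sums run over \emph{odd} $s$ and $s'$ only. Your plan treats $\overline{E}_{2}\times\overline{B}_{2}$ and $\overline{E}_{3}\times\overline{B}_{2}$ symmetrically and defers their combination to ``bookkeeping,'' but the reason the $\overline{E}_{2}\times\overline{B}_{2}$ term (prefactor $\alpha\beta$, with sums over even $s$) is absent is not a matter of normalisation: one must show that $\int_{S(r)}(\Gamma\times\Gamma')\cdot d\overline{S}=0$ to the relevant order, which in the paper follows from an explicit pairwise cancellation of the eight angular integrals arising from the components of the cross product (terms 1 and 4, 2 and 5, 3 and 7, 6 and 8 in the paper's $(\dag\dag\dag)$). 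Without identifying this cancellation your computation would terminate in a formula containing an extra $\alpha\beta$ block that is not in the statement. A smaller point: in the de Moivre expansion of $\cos(m\theta')$ the exponent $s$ of $\sin^{s}(\theta')$ is always even; what changes with the parity of $m$ is the parity of the exponent $m-s$ of $\cos^{m-s}(\theta')$, and it is this, fed through the vanishing criterion for $I_{\alpha}^{\beta}$ (both exponents must be even), that decides whether the even-order or odd-order Taylor terms survive and hence produces the $(2w)!$ versus $(2w+1)!$ dichotomy.
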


\begin{proof}
We have that;\\

$\overline{E}_{2}(\overline{r},t)={1\over 4\pi\epsilon_{0}}\int [{\dot{\rho}(\overline{r'},t_{r})\over c\mathfrak{r}}\hat{\mathfrak{\overline{r}}}]d\tau'$\\

where;\\

 $\overline{r}=(x,y,z)$, $\overline{r'}=(cos(\theta,sin(\theta),0)$\\

 $\overline{r}-\overline{r'}=(x-cos(\theta),y-sin(\theta),z)$\\

 $\mathfrak{r}=|\overline{r}-\overline{r'}|=(x^{2}+y^{2}+z^{2}+1-2xcos(\theta)-2ysin(\theta))^{1\over 2}$\\

 $=(r^{2}+1-2xcos(\theta)-2ysin(\theta))^{1\over 2}$\\

 $\hat{\overline{\mathfrak{r}}}={(\overline{r}-\overline{r'})\over \mathfrak{r}}={(x-cos(\theta),y-sin(\theta),z)\over (x^{2}+y^{2}+z^{2}+1-2xcos(\theta)-2ysin(\theta))^{1\over 2}}$\\

 $\dot{\rho}(\theta,t)=-mcos(m\theta)sin(mt)$\\

 $\overline{E}_{2}(\overline{r},t)={1\over 4\pi\epsilon_{0}c}\int_{-\pi}^{\pi}{-mcos(m\theta)sin(mt_{r})\over (r^{2}+1-2xcos(\theta)-2ysin(\theta))}(x-cos(\theta),y-sin(\theta),z)d\theta$\\

$={1\over 4\pi\epsilon_{0}c(r^{2}+1)}\int_{-\pi}^{\pi}{-mcos(m\theta)sin(m(t-{\mathfrak{r}\over c}))\over (1-{2xcos(\theta)-2ysin(\theta)\over (r^{2}+1)})}(x-cos(\theta),y-sin(\theta),z)d\theta$\\

$={1\over 4\pi\epsilon_{0}c(r^{2}+1)}\int_{-\pi}^{\pi}-mcos(m\theta)sin(m(t-{\mathfrak{r}\over c}))(x-cos(\theta),y-sin(\theta),z)d\theta+O({1\over r^{2}})$\\

$={1\over 4\pi\epsilon_{0}c(r^{2}+1)}\int_{-\pi}^{\pi}-mcos(m\theta)[sin(mt)cos(m{\mathfrak{r}\over c})-cos(mt)sin(m{\mathfrak{r}\over c})](x,y,z)d\theta+O({1\over r^{2}})$\\

$={-msin(mt)\over 4\pi\epsilon_{0}c(r^{2}+1)}\int_{-\pi}^{\pi}cos(m\theta)cos(m{\mathfrak{r}\over c})(x,y,z)d\theta$\\

$+{mcos(mt)\over 4\pi\epsilon_{0}c(r^{2}+1)}\int_{-\pi}^{\pi}cos(m\theta)sin(m{\mathfrak{r}\over c})(x,y,z)d\theta+O({1\over r^{2}})$\\

Observe that, using Taylor expansions;\\

$cos(m{\mathfrak{r}\over c})=cos(m{(r^{2}+1)^{1\over 2}\over c}(1+{xcos(\theta)+ysin(\theta)\over r^{2}+1}))+O({1\over r})$\\

$=cos(m{(r^{2}+1)^{1\over 2}\over c})cos({mxcos(\theta)+mysin(\theta)\over (r^{2}+1)^{1\over 2}})-sin(m{(r^{2}+1)^{1\over 2}\over c})sin({mxcos(\theta)+mysin(\theta)\over (r^{2}+1)^{1\over 2}})+O({1\over r})$

$sin(m{\mathfrak{r}\over c})=sin(m{(r^{2}+1)^{1\over 2}\over c}(1+{xcos(\theta)+ysin(\theta)\over r^{2}+1}))+O({1\over r})$\\

$=sin(m{(r^{2}+1)^{1\over 2}\over c})cos({mxcos(\theta)+mysin(\theta)\over (r^{2}+1)^{1\over 2}})+cos(m{(r^{2}+1)^{1\over 2}\over c})sin({mxcos(\theta)+mysin(\theta)\over (r^{2}+1)^{1\over 2}})+O({1\over r})$\\

so that;\\

$\overline{E}_{2}(\overline{r},t)={-msin(mt)cos(m{(r^{2}+1)^{1\over 2}\over c})\over 4\pi\epsilon_{0}c(r^{2}+1)}\int_{-\pi}^{\pi}cos(m\theta)cos({mxcos(\theta)+mysin(\theta)\over (r^{2}+1)^{1\over 2}})(x,y,z)d\theta$\\

$+{msin(mt)sin(m{(r^{2}+1)^{1\over 2}\over c})\over 4\pi\epsilon_{0}c(r^{2}+1)}\int_{-\pi}^{\pi}cos(m\theta)sin({mxcos(\theta)+mysin(\theta)\over (r^{2}+1)^{1\over 2}})(x,y,z)d\theta$\\

$+{mcos(mt)sin(m{(r^{2}+1)^{1\over 2}\over c})\over 4\pi\epsilon_{0}c(r^{2}+1)}\int_{-\pi}^{\pi}cos(m\theta)cos({mxcos(\theta)+mysin(\theta)\over (r^{2}+1)^{1\over 2}})(x,y,z)d\theta$\\

$+{mcos(mt)cos(m{(r^{2}+1)^{1\over 2}\over c})\over 4\pi\epsilon_{0}c(r^{2}+1)}\int_{-\pi}^{\pi}cos(m\theta)sin({mxcos(\theta)+mysin(\theta)\over (r^{2}+1)^{1\over 2}})(x,y,z)d\theta+O({1\over r^{2}})$ $(*)$\\

We have that;\\

$cos(m\theta)=Re((cos(\theta)+isin(\theta))^{m})$\\

$=\sum_{s\leq m,s, even}(-1)^{s\over 2}C^{m}_{s}cos^{m-s}(\theta)sin^{s}(\theta)$\\

$cos({mxcos(\theta)+mysin(\theta)\over (r^{2}+1)^{1\over 2}})$\\

$=\sum_{w=0}^{\infty}{(-1)^{w}m^{2w}(xcos(\theta)+ysin(\theta))^{2w}\over (2w)!(r^{2}+1)^{w}}$\\

$=\sum_{w=0}^{\infty}{(-1)^{w}m^{2w}\over (2w)!(r^{2}+1)^{w}}\sum_{v=0}^{2w}C^{2w}_{v}x^{2w-v}cos^{2w-v}(\theta)y^{v}sin^{v}(\theta)$\\

$sin({mxcos(\theta)+mysin(\theta)\over (r^{2}+1)^{1\over 2}})$\\

$=\sum_{w=0}^{\infty}{(-1)^{w}m^{2w+1}(xcos(\theta)+ysin(\theta))^{2w+1}\over (2w+1)!(r^{2}+1)^{w+{1\over 2}}}$\\

$=\sum_{w=0}^{\infty}{(-1)^{w}m^{2w+1}\over (2w+1)!(r^{2}+1)^{w+{1\over 2}}}\sum_{v=0}^{2w+1}C^{2w+1}_{v}x^{2w+1-v}cos^{2w+1-v}(\theta)y^{v}sin^{v}(\theta)$\\

so that;\\

$\overline{E}_{2}(\overline{r},t)={-msin(mt)cos(m{(r^{2}+1)^{1\over 2}\over c})\over 4\pi\epsilon_{0}c(r^{2}+1)}\sum_{w=0}^{\infty}\sum_{s\leq m,s, even}(-1)^{s\over 2}C^{m}_{s}{(-1)^{w}m^{2w}\over (2w)!(r^{2}+1)^{w}}\sum_{v=0}^{2w}C^{2w}_{v}x^{2w-v}y^{v}(x,y,z)$\\

$\int_{-\pi}^{\pi}cos^{m-s}(\theta)sin^{s}(\theta)cos^{2w-v}(\theta)sin^{v}(\theta)d\theta$\\

$+{msin(mt)sin(m{(r^{2}+1)^{1\over 2}\over c})\over 4\pi\epsilon_{0}c(r^{2}+1)}\sum_{w=0}^{\infty}\sum_{s\leq m,s, even}(-1)^{s\over 2}C^{m}_{s}{(-1)^{w}m^{2w+1}\over (2w+1)!(r^{2}+1)^{w+{1\over 2}}}\sum_{v=0}^{2w+1}C^{2w+1}_{v}x^{2w+1-v}y^{v}(x,y,z)$\\

$\int_{-\pi}^{\pi}cos^{m-s}(\theta)sin^{s}(\theta)cos^{2w+1-v}(\theta)sin^{v}(\theta)d\theta$\\

$+{mcos(mt)sin(m{(r^{2}+1)^{1\over 2}\over c})\over 4\pi\epsilon_{0}c(r^{2}+1)}\sum_{w=0}^{\infty}\sum_{s\leq m,s, even}(-1)^{s\over 2}C^{m}_{s}{(-1)^{w}m^{2w}\over (2w)!(r^{2}+1)^{w}}\sum_{v=0}^{2w}C^{2w}_{v}x^{2w-v}y^{v}(x,y,z)$\\

$\int_{-\pi}^{\pi}cos^{m-s}(\theta)sin^{s}(\theta)cos^{2w-v}(\theta)sin^{v}(\theta)d\theta$\\

$+{mcos(mt)cos(m{(r^{2}+1)^{1\over 2}\over c})\over 4\pi\epsilon_{0}c(r^{2}+1)}\sum_{w=0}^{\infty}\sum_{s\leq m,s, even}(-1)^{s\over 2}C^{m}_{s}{(-1)^{w}m^{2w+1}\over (2w+1)!(r^{2}+1)^{w+{1\over 2}}}\sum_{v=0}^{2w+1}C^{2w+1}_{v}x^{2w+1-v}y^{v}(x,y,z)$\\

$\int_{-\pi}^{\pi}cos^{m-s}(\theta)sin^{s}(\theta)cos^{2w+1-v}(\theta)sin^{v}(\theta)d\theta+O({1\over r^{2}})$\\

We recall the result that;\\

$I_{\alpha}^{\beta}=\int_{-\pi}^{\pi}cos^{\alpha}(\theta)sin^{\beta}(\theta)d\theta={\pi\alpha!\beta!\over 2^{\alpha+\beta-1}({\alpha\over 2})!({\beta\over 2})!({\alpha+\beta\over 2})!}$\\

for $\alpha\geq 0$, $\beta\geq 0$, $\alpha$ and $\beta$ even.\\

$I_{\alpha}^{\beta}=0$ otherwise\\

Applying the result in this case, we obtain;\\

if $m$ is even, then;\\

$\overline{E}_{2}(\overline{r},t)={-msin(mt)cos(m{(r^{2}+1)^{1\over 2}\over c})\over 4\pi\epsilon_{0}c(r^{2}+1)}\sum_{w=0}^{\infty}\sum_{s\leq m,s, even}(-1)^{s\over 2}C^{m}_{s}{(-1)^{w}m^{2w}\over (2w)!(r^{2}+1)^{w}}$\\

$\sum_{v=0,v,even}^{2w}C^{2w}_{v}x^{2w-v}y^{v}(x,y,z)I_{2w+m-s-v}^{s+v}$\\

$+{mcos(mt)sin(m{(r^{2}+1)^{1\over 2}\over c})\over 4\pi\epsilon_{0}c(r^{2}+1)}\sum_{w=0}^{\infty}\sum_{s\leq m,s, even}(-1)^{s\over 2}C^{m}_{s}{(-1)^{w}m^{2w}\over (2w)!(r^{2}+1)^{w}}$\\

$\sum_{v=0,v,even}^{2w}C^{2w}_{v}x^{2w-v}y^{v}(x,y,z)I_{2w+m-s-v}^{s+v}+O({1\over r^{2}})$ $(**)$\\

if $m$ is odd, then;\\

$\overline{E}_{2}(\overline{r},t)={msin(mt)sin(m{(r^{2}+1)^{1\over 2}\over c})\over 4\pi\epsilon_{0}c(r^{2}+1)}\sum_{w=0}^{\infty}\sum_{s\leq m,s, even}(-1)^{s\over 2}C^{m}_{s}{(-1)^{w}m^{2w+1}\over (2w+1)!(r^{2}+1)^{w+{1\over 2}}}$\\

$\sum_{v=0,v,even}^{2w+1}C^{2w+1}_{v}x^{2w+1-v}y^{v}(x,y,z)I_{2w+1+m-s-v}^{s+v}$\\

$+{mcos(mt)cos(m{(r^{2}+1)^{1\over 2}\over c})\over 4\pi\epsilon_{0}c(r^{2}+1)}\sum_{w=0}^{\infty}\sum_{s\leq m,s, even}(-1)^{s\over 2}C^{m}_{s}{(-1)^{w}m^{2w+1}\over (2w+1)!(r^{2}+1)^{w+{1\over 2}}}$\\

$\sum_{v=0,v, even}^{2w+1}C^{2w+1}_{v}x^{2w+1-v}y^{v}(x,y,z)I_{2w+1+m-s-v}^{s+v}+O({1\over r^{2}})$ $(***)$\\

We have that;\\

$\overline{E}_{3}(\overline{r},t)={-1\over 4\pi\epsilon_{0}}\int [{\dot{\overline{J}}(\overline{r'},t_{r})\over c^{2}\mathfrak{r}}]d\tau'$\\

$\dot{\overline{J}}(\theta,t)=msin(m\theta)cos(mt)(-sin(\theta),cos(\theta),0)$\\

$\overline{E}_{3}(\overline{r},t)={-1\over 4\pi\epsilon_{0}c^{2}}\int_{-\pi}^{\pi}{msin(m\theta)cos(mt_{r})\over (r^{2}+1-2xcos(\theta)-2ysin(\theta))^{1\over 2}}(-sin(\theta),cos(\theta),0)d\theta$\\

$={-1\over 4\pi\epsilon_{0}c^{2}(r^{2}+1)^{1\over 2}}\int_{-\pi}^{\pi}{msin(m\theta)cos(m(t-{\mathfrak{r}\over c}))\over (1-{2xcos(\theta)-2ysin(\theta)\over (r^{2}+1)})^{1\over 2}}(-sin(\theta),cos(\theta),0)d\theta$\\

$={-1\over 4\pi\epsilon_{0}c^{2}(r^{2}+1)^{1\over 2}}\int_{-\pi}^{\pi}msin(m\theta)cos(m(t-{\mathfrak{r}\over c}))(-sin(\theta),cos(\theta),0)d\theta+O({1\over r^{2}})$\\

$={-1\over 4\pi\epsilon_{0}c^{2}(r^{2}+1)^{1\over 2}}\int_{-\pi}^{\pi}msin(m\theta)[cos(mt)cos(m{\mathfrak{r}\over c})+sin(mt)sin(m{\mathfrak{r}\over c})](-sin(\theta),cos(\theta),0)d\theta+O({1\over r^{2}})$\\

$={-mcos(mt)\over 4\pi\epsilon_{0}c^{2}(r^{2}+1)^{1\over 2}}\int_{-\pi}^{\pi}sin(m\theta)cos(m{\mathfrak{r}\over c})(-sin(\theta),cos(\theta),0)d\theta$\\

$-{msin(mt)\over 4\pi\epsilon_{0}c^{2}(r^{2}+1)^{1\over 2}}\int_{-\pi}^{\pi}sin(m\theta)sin(m{\mathfrak{r}\over c})(-sin(\theta),cos(\theta),0)d\theta+O({1\over r^{2}})$\\

$={-mcos(mt)cos(m(r^{2}+1)^{1\over 2})\over 4\pi\epsilon_{0}c^{2}(r^{2}+1)^{1\over 2}}\int_{-\pi}^{\pi}sin(m\theta)cos({mxcos(\theta)+mysin(\theta)\over (r^{2}+1)^{1\over 2}})(-sin(\theta),cos(\theta),0)d\theta$\\

$+{mcos(mt)sin(m(r^{2}+1)^{1\over 2})\over 4\pi\epsilon_{0}c^{2}(r^{2}+1)^{1\over 2}}\int_{-\pi}^{\pi}sin(m\theta)sin({mxcos(\theta)+mysin(\theta)\over (r^{2}+1)^{1\over 2}})(-sin(\theta),cos(\theta),0)d\theta$\\

$-{msin(mt)sin(m(r^{2}+1)^{1\over 2})\over 4\pi\epsilon_{0}c^{2}(r^{2}+1)^{1\over 2}}\int_{-\pi}^{\pi}sin(m\theta)cos({mxcos(\theta)+mysin(\theta)\over (r^{2}+1)^{1\over 2}})(-sin(\theta),cos(\theta),0)d\theta$\\

$-{msin(mt)cos(m(r^{2}+1)^{1\over 2})\over 4\pi\epsilon_{0}c^{2}(r^{2}+1)^{1\over 2}}\int_{-\pi}^{\pi}sin(m\theta)sin({mxcos(\theta)+mysin(\theta)\over (r^{2}+1)^{1\over 2}})(-sin(\theta),cos(\theta),0)d\theta+O({1\over r^{2}})$ $(****)$\\

We have that;\\

$sin(m\theta)=Im((cos(\theta)+isin(\theta))^{m})$\\

$=\sum_{s\leq m,s odd}(-1)^{s-1\over 2}C^{m}_{s}cos^{m-s}(\theta)sin^{s}(\theta)$\\

so that;\\

$\overline{E}_{3}(\overline{r},t)={-mcos(mt)cos(m{(r^{2}+1)^{1\over 2}\over c})\over 4\pi\epsilon_{0}c^{2}(r^{2}+1)^{1\over 2}}\sum_{w=0}^{\infty}\sum_{s\leq m,s, odd}(-1)^{s-1\over 2}C^{m}_{s}{(-1)^{w}m^{2w}\over (2w)!(r^{2}+1)^{w}}$\\

$\sum_{v=0}^{2w}C^{2w}_{v}x^{2w-v}y^{v}\int_{-\pi}^{\pi}(-sin(\theta),cos(\theta),0)cos^{m-s}(\theta)sin^{s}(\theta)cos^{2w-v}(\theta)sin^{v}(\theta)d\theta$\\

$+{mcos(mt)sin(m{(r^{2}+1)^{1\over 2}\over c})\over 4\pi\epsilon_{0}c^{2}(r^{2}+1)^{1\over 2}}\sum_{w=0}^{\infty}\sum_{s\leq m,s, odd}(-1)^{s-1\over 2}C^{m}_{s}{(-1)^{w}m^{2w+1}\over (2w+1)!(r^{2}+1)^{w+{1\over 2}}}$\\

$\sum_{v=0}^{2w+1}C^{2w+1}_{v}x^{2w+1-v}y^{v}\int_{-\pi}^{\pi}(-sin(\theta),cos(\theta),0)cos^{m-s}(\theta)sin^{s}(\theta)cos^{2w+1-v}(\theta)sin^{v}(\theta)d\theta$\\

$-{msin(mt)sin(m{(r^{2}+1)^{1\over 2}\over c})\over 4\pi\epsilon_{0}c^{2}(r^{2}+1)^{1\over 2}}\sum_{w=0}^{\infty}\sum_{s\leq m,s, odd}(-1)^{s-1\over 2}C^{m}_{s}{(-1)^{w}m^{2w}\over (2w)!(r^{2}+1)^{w}}$\\

$\sum_{v=0}^{2w}C^{2w}_{v}x^{2w-v}y^{v}\int_{-\pi}^{\pi}(-sin(\theta),cos(\theta),0)cos^{m-s}(\theta)sin^{s}(\theta)cos^{2w-v}(\theta)sin^{v}(\theta)d\theta$\\

$-{msin(mt)cos(m{(r^{2}+1)^{1\over 2}\over c})\over 4\pi\epsilon_{0}c^{2}(r^{2}+1)^{1\over 2}}\sum_{w=0}^{\infty}\sum_{s\leq m,s, odd}(-1)^{s-1\over 2}C^{m}_{s}{(-1)^{w}m^{2w+1}\over (2w+1)!(r^{2}+1)^{w+{1\over 2}}}$\\

$\sum_{v=0}^{2w+1}C^{2w+1}_{v}x^{2w+1-v}y^{v}\int_{-\pi}^{\pi}(-sin(\theta),cos(\theta),0)cos^{m-s}(\theta)sin^{s}(\theta)cos^{2w+1-v}(\theta)sin^{v}(\theta)d\theta+O({1\over r^{2}})$\\

It follows that;\\

for $m$ even;\\

$\overline{E}_{3}(\overline{r},t)=+{mcos(mt)sin(m{(r^{2}+1)^{1\over 2}\over c})\over 4\pi\epsilon_{0}c^{2}(r^{2}+1)^{1\over 2}}\sum_{w=0}^{\infty}\sum_{s\leq m,s, odd}(-1)^{s-1\over 2}C^{m}_{s}{(-1)^{w}m^{2w+1}\over (2w+1)!(r^{2}+1)^{w+{1\over 2}}}$\\

$(-\sum_{v=0,v,even}^{2w+1}C^{2w+1}_{v}x^{2w+1-v}y^{v}I_{2w+1+m-v-s}^{v+s+1},\sum_{v=0,v,odd}^{2w+1}C^{2w+1}_{v}x^{2w+1-v}y^{v}I_{2w+2+m-v-s}^{v+s},0)$\\

$-{msin(mt)cos(m{(r^{2}+1)^{1\over 2}\over c})\over 4\pi\epsilon_{0}c^{2}(r^{2}+1)^{1\over 2}}\sum_{w=0}^{\infty}\sum_{s\leq m,s, odd}(-1)^{s-1\over 2}C^{m}_{s}{(-1)^{w}m^{2w+1}\over (2w+1)!(r^{2}+1)^{w+{1\over 2}}}$\\

$(-\sum_{v=0,v,even}^{2w+1}C^{2w+1}_{v}x^{2w+1-v}y^{v}I_{2w+1+m-v-s}^{v+s+1},\sum_{v=0,v,odd}^{2w+1}C^{2w+1}_{v}x^{2w+1-v}y^{v}I_{2w+2+m-v-s}^{v+s},0)+O({1\over r^{2}})$ $(*****)$\\

for $m$ odd;\\

$\overline{E}_{3}(\overline{r},t)={-mcos(mt)cos(m{(r^{2}+1)^{1\over 2}\over c})\over 4\pi\epsilon_{0}c^{2}(r^{2}+1)^{1\over 2}}\sum_{w=0}^{\infty}\sum_{s\leq m,s, odd}(-1)^{s-1\over 2}C^{m}_{s}{(-1)^{w}m^{2w}\over (2w)!(r^{2}+1)^{w}}$\\

$(-\sum_{v=0,v,even}^{2w}C^{2w}_{v}x^{2w-v}y^{v}I_{2w+m-v-s}^{v+s+1},\sum_{v=0,v,odd}^{2w}C^{2w}_{v}x^{2w-v}y^{v}I_{2w+1+m-v-s}^{v+s},0)$\\

$-{msin(mt)sin(m{(r^{2}+1)^{1\over 2}\over c})\over 4\pi\epsilon_{0}c^{2}(r^{2}+1)^{1\over 2}}\sum_{w=0}^{\infty}\sum_{s\leq m,s, odd}(-1)^{s-1\over 2}C^{m}_{s}{(-1)^{w}m^{2w}\over (2w)!(r^{2}+1)^{w}}$\\

$(-\sum_{v=0,v,even}^{2w}C^{2w}_{v}x^{2w-v}y^{v}I_{2w+m-v-s}^{v+s+1},\sum_{v=0,v,odd}^{2w}C^{2w}_{v}x^{2w-v}y^{v}I_{2w+1+m-v-s}^{v+s},0)+O({1\over r^{2}})$ $(******)$\\

We have that;\\

$\overline{B}_{2}(\overline{r},t)={\mu_{0}\over 4\pi}\int [{\dot{\overline{J}}(\overline{r'},t_{r})\over c\mathfrak{r}}]\times \hat{\mathfrak{\overline{r}}} d\tau'$, $(*)$\\

$(-sin(\theta),cos(\theta),0)\times (x-cos(\theta),y-sin(\theta),z)$\\

$=(cos(\theta)z,sin(\theta)z,-sin(\theta)(y-sin(\theta)-cos(\theta)(x-cos(\theta)))$\\

$\overline{B}_{2}(\overline{r},t)={\mu_{0}\over 4\pi c}\int_{-\pi}^{\pi}{msin(m\theta)cos(mt_{r})\over (r^{2}+1-2xcos(\theta)-2ysin(\theta))}(cos(\theta)z,sin(\theta)z,-sin(\theta)(y$\\

$-sin(\theta)-cos(\theta)(x-cos(\theta))))d\theta$ $(*)$\\

$={\mu_{0}\over 4\pi c(r^{2}+1)}\int_{-\pi}^{\pi}{msin(m\theta)cos(m(t-{\mathfrak{r}\over c}))\over (1-{2xcos(\theta)-2ysin(\theta)\over (r^{2}+1)})}(cos(\theta)z,sin(\theta)z,-sin(\theta)y-cos(\theta)x)d\theta+O({1\over r^{2}})$\\

$={\mu_{0}\over 4\pi c(r^{2}+1)}\int_{-\pi}^{\pi} msin(m\theta)cos(m(t-{\mathfrak{r}\over c}))(cos(\theta)z,sin(\theta)z,-sin(\theta)y-cos(\theta)x)d\theta+O({1\over r^{2}})$\\

$={\mu_{0}mcos(mt)\over 4\pi c(r^{2}+1)}\int_{-\pi}^{\pi} sin(m\theta)cos(m{\mathfrak{r}\over c}))(cos(\theta)z,sin(\theta)z,-sin(\theta)y-cos(\theta)x)d\theta$\\

$+{\mu_{0}msin(mt)\over 4\pi c(r^{2}+1)}\int_{-\pi}^{\pi} sin(m\theta)sin(m{\mathfrak{r}\over c}))(cos(\theta)z,sin(\theta)z,-sin(\theta)y-cos(\theta)x)d\theta+O({1\over r^{2}})$\\

$={\mu_{0}mcos(mt)cos(m(r^{2}+1)^{1\over 2})\over 4\pi c(r^{2}+1)}\int_{-\pi}^{\pi}sin(m\theta)cos({mxcos(\theta)+mysin(\theta)\over (r^{2}+1)^{1\over 2}})(cos(\theta)z,sin(\theta)z,-sin(\theta)y-cos(\theta)x)d\theta$\\

$-{\mu_{0}mcos(mt)sin(m(r^{2}+1)^{1\over 2})\over 4\pi c(r^{2}+1)}\int_{-\pi}^{\pi}sin(m\theta)sin({mxcos(\theta)+mysin(\theta)\over (r^{2}+1)^{1\over 2}})(cos(\theta)z,sin(\theta)z,-sin(\theta)y-cos(\theta)x)d\theta$\\

$+{\mu_{0}msin(mt)sin(m(r^{2}+1)^{1\over 2})\over 4\pi c(r^{2}+1)}\int_{-\pi}^{\pi}sin(m\theta)cos({mxcos(\theta)+mysin(\theta)\over (r^{2}+1)^{1\over 2}})(cos(\theta)z,sin(\theta)z,-sin(\theta)y-cos(\theta)x)d\theta$\\

$+{\mu_{0}msin(mt)cos(m(r^{2}+1)^{1\over 2})\over 4\pi c(r^{2}+1)}\int_{-\pi}^{\pi}sin(m\theta)sin({mxcos(\theta)+mysin(\theta)\over (r^{2}+1)^{1\over 2}})(cos(\theta)z,sin(\theta)z,-sin(\theta)y-cos(\theta)x)d\theta+O({1\over r^{2}})$ $(*******)$\\

$={\mu_{0}mcos(mt)cos(m{(r^{2}+1)^{1\over 2}\over c})\over 4\pi c(r^{2}+1)}\sum_{w=0}^{\infty}\sum_{s\leq m,s, odd}(-1)^{s-1\over 2}C^{m}_{s}{(-1)^{w}m^{2w}\over (2w)!(r^{2}+1)^{w}}$\\

$\sum_{v=0}^{2w}C^{2w}_{v}x^{2w-v}y^{v}\int_{-\pi}^{\pi}(zcos(\theta),zsin(\theta),-ysin(\theta)-xcos(\theta))$\\

$cos^{m-s}(\theta)sin^{s}(\theta)cos^{2w-v}(\theta)sin^{v}(\theta)d\theta$\\

$-{\mu_{0}mcos(mt)sin(m{(r^{2}+1)^{1\over 2}\over c})\over 4\pi c(r^{2}+1)}\sum_{w=0}^{\infty}\sum_{s\leq m,s, odd}(-1)^{s-1\over 2}C^{m}_{s}{(-1)^{w}m^{2w+1}\over (2w+1)!(r^{2}+1)^{w+{1\over 2}}}$\\

$\sum_{v=0}^{2w+1}C^{2w+1}_{v}x^{2w+1-v}y^{v}\int_{-\pi}^{\pi}(zcos(\theta),zsin(\theta),-ysin(\theta)-xcos(\theta))$\\

$cos^{m-s}(\theta)sin^{s}(\theta)cos^{2w+1-v}(\theta)sin^{v}(\theta)d\theta$\\

$+{\mu_{0}msin(mt)sin(m{(r^{2}+1)^{1\over 2}\over c})\over 4\pi c(r^{2}+1)}\sum_{w=0}^{\infty}\sum_{s\leq m,s, odd}(-1)^{s-1\over 2}C^{m}_{s}{(-1)^{w}m^{2w}\over (2w)!(r^{2}+1)^{w}}$\\

$\sum_{v=0}^{2w}C^{2w}_{v}x^{2w-v}y^{v}\int_{-\pi}^{\pi}(zcos(\theta),zsin(\theta),-ysin(\theta)-xcos(\theta))$\\

$cos^{m-s}(\theta)sin^{s}(\theta)cos^{2w-v}(\theta)sin^{v}(\theta)d\theta$\\

$+{\mu_{0}msin(mt)cos(m{(r^{2}+1)^{1\over 2}\over c})\over 4\pi c(r^{2}+1)}\sum_{w=0}^{\infty}\sum_{s\leq m,s, odd}(-1)^{s-1\over 2}C^{m}_{s}{(-1)^{w}m^{2w+1}\over (2w+1)!(r^{2}+1)^{w+{1\over 2}}}$\\

$\sum_{v=0}^{2w+1}C^{2w+1}_{v}x^{2w+1-v}y^{v}\int_{-\pi}^{\pi}(zcos(\theta),zsin(\theta),-ysin(\theta)-xcos(\theta))$\\

$cos^{m-s}(\theta)sin^{s}(\theta)cos^{2w+1-v}(\theta)sin^{v}(\theta)d\theta+O({1\over r^{2}})$\\

It follows that;\\

for $m$ even;\\

$\overline{B}_{2}(\overline{r},t)=-{\mu_{0}mcos(mt)sin(m{(r^{2}+1)^{1\over 2}\over c})\over 4\pi c(r^{2}+1)}\sum_{w=0}^{\infty}\sum_{s\leq m,s, odd}(-1)^{s-1\over 2}C^{m}_{s}{(-1)^{w}m^{2w+1}\over (2w+1)!(r^{2}+1)^{w+{1\over 2}}}$\\

$(\sum_{v=0,v,odd}^{2w+1}C^{2w+1}_{v}x^{2w+1-v}y^{v}zI_{2w+2+m-v-s}^{s+v},\sum_{v=0,v,even}^{2w+1}C^{2w+1}_{v}x^{2w+1-v}y^{v}zI_{2w+1+m-v-s}^{s+v+1},$\\

$-\sum_{v=0,v,even}^{2w+1}C^{2w+1}_{v}x^{2w+1-v}y^{v+1}I_{2w+1+m-v-s}^{s+v+1}-\sum_{v=0,v,odd}^{2w+1}C^{2w+1}_{v}x^{2w+2-v}y^{v}I_{2w+2+m-v-s}^{s+v})$\\

$+{\mu_{0}sin(mt)cos(m{(r^{2}+1)^{1\over 2}\over c})\over 4\pi c(r^{2}+1)}\sum_{w=0}^{\infty}\sum_{s\leq m,s, odd}(-1)^{s-1\over 2}C^{m}_{s}{(-1)^{w}m^{2w+1}\over (2w+1)!(r^{2}+1)^{w+{1\over 2}}}$\\

$(\sum_{v=0,v,odd}^{2w+1}C^{2w+1}_{v}x^{2w+1-v}y^{v}zI_{2w+2+m-v-s}^{s+v},\sum_{v=0,v,even}^{2w+1}C^{2w+1}_{v}x^{2w+1-v}y^{v}zI_{2w+1+m-v-s}^{s+v+1},$\\

$-\sum_{v=0,v,even}^{2w+1}C^{2w+1}_{v}x^{2w+1-v}y^{v+1}I_{2w+1+m-v-s}^{s+v+1}-\sum_{v=0,v,odd}^{2w+1}C^{2w+1}_{v}x^{2w+2-v}y^{v}I_{2w+2+m-v-s}^{s+v})+O({1\over r^{2}})$ $(********)$\\

and for $m$ odd;\\

$\overline{B}_{2}(\overline{r},t)={\mu_{0}mcos(mt)cos(m{(r^{2}+1)^{1\over 2}\over c})\over 4\pi c(r^{2}+1)}\sum_{w=0}^{\infty}\sum_{s\leq m,s, odd}(-1)^{s-1\over 2}C^{m}_{s}{(-1)^{w}m^{2w}\over (2w)!(r^{2}+1)^{w}}$\\

$(\sum_{v=0,v,odd}^{2w}C^{2w}_{v}x^{2w-v}y^{v}zI_{2w+1+m-v-s}^{s+v},\sum_{v=0,v,even}^{2w}C^{2w}_{v}x^{2w-v}y^{v}zI_{2w+m-v-s}^{s+v+1},$\\

$-\sum_{v=0,v,even}^{2w}C^{2w}_{v}x^{2w-v}y^{v+1}I_{2w+m-v-s}^{s+v+1}-\sum_{v=0,v,odd}^{2w}C^{2w}_{v}x^{2w+1-v}y^{v}I_{2w+1+m-v-s}^{s+v})$\\

$+{\mu_{0}sin(mt)sin(m{(r^{2}+1)^{1\over 2}\over c})\over 4\pi c(r^{2}+1)}\sum_{w=0}^{\infty}\sum_{s\leq m,s, odd}(-1)^{s-1\over 2}C^{m}_{s}{(-1)^{w}m^{2w+1}\over (2w)!(r^{2}+1)^{w}}$\\

$(\sum_{v=0,v,odd}^{2w}C^{2w}_{v}x^{2w-v}y^{v}zI_{2w+1+m-v-s}^{s+v},\sum_{v=0,v,even}^{2w}C^{2w}_{v}x^{2w-v}y^{v}zI_{2w+m-v-s}^{s+v+1},$\\

$-\sum_{v=0,v,even}^{2w}C^{2w}_{v}x^{2w-v}y^{v+1}I_{2w+m-v-s}^{s+v+1}-\sum_{v=0,v,odd}^{2w}C^{2w}_{v}x^{2w+1-v}y^{v}I_{2w+1+m-v-s}^{s+v})+O({1\over r^{2}})$ $(*********)$\\

We now compute the Poynting vectors $\overline{E}_{2}\times \overline{B}_{2}$ and $\overline{E}_{3}\times \overline{B}_{2}$ in the cases when $m$ is even and $m$ is odd. If $m$ is even, by $(*)$ and $(**)$ we have that;\\

$\overline{E}_{2,e}=\overline{E}_{2,e}^{1}+\overline{E}_{2,e}^{2}$\\

$=-\alpha msin(mt)cos(m{(r^{2}+1)^{1\over 2}\over c})\Gamma+\alpha mcos(mt)sin(m{(r^{2}+1)^{1\over 2}\over c})\Gamma$\\

where $\alpha={1\over 4\pi\epsilon_{0}c(r^{2}+1)}$ and;\\

$\Gamma=\int_{-\pi}^{\pi}cos(m\theta)cos({mxcos(\theta)+mysin(\theta)\over (r^{2}+1)^{1\over 2}})(x,y,z)d\theta$\\

If $m$ is even, by $(*******),(********)$;\\

$\overline{B}_{2,e}=\overline{B}_{2,e}^{1}+\overline{B}_{2,e}^{2}$\\

$=-\beta mcos(mt)sin(m{(r^{2}+1)^{1\over 2}\over c})\Gamma'+\beta msin(mt)cos(m{(r^{2}+1)^{1\over 2}\over c})\Gamma'+O({1\over r^{2}})$\\

where $\beta={\mu_{0}\over 4\pi c(r^{2}+1)}$ and;\\

$\Gamma'=\int_{-\pi}^{\pi}sin(m\theta)sin({mxcos(\theta)+mysin(\theta)\over (r^{2}+1)^{1\over 2}})(cos(\theta)z,sin(\theta)z,-sin(\theta)y-cos(\theta)x)d\theta$\\

It follows that;\\

$\overline{E}_{2,e}\times \overline{B}_{2,e}$\\

$=\overline{E}_{2,e}^{1}\times \overline{B}_{2,e}^{1}+\overline{E}_{2,e}^{2}\times \overline{B}_{2,e}^{1}+\overline{E}_{2,e}^{1}\times\overline{B}_{2,e}^{2}+\overline{E}_{2,e}^{2}\times\overline{B}_{2,e}^{2}+O({1\over r^{3}})$\\

$=\alpha\beta m^{2}sin(mt)cos(mt)sin(m{(r^{2}+1)^{1\over 2}\over c})cos(m{(r^{2}+1)^{1\over 2}\over c})\Gamma\times\Gamma'$\\

$-\alpha\beta m^{2}cos^{2}(mt)sin^{2}(m{(r^{2}+1)^{1\over 2}\over c})\Gamma\times\Gamma'$\\

$-\alpha\beta m^{2}sin^{2}(mt)cos^{2}(m{(r^{2}+1)^{1\over 2}\over c})\Gamma\times\Gamma'$\\

$+\alpha\beta m^{2}sin(mt)cos(mt)sin(m{(r^{2}+1)^{1\over 2}\over c})cos(m{(r^{2}+1)^{1\over 2}\over c})\Gamma\times\Gamma'+O({1\over r^{3}})$\\

$=\alpha\beta m^{2}[-sin^{2}(mt)cos^{2}(m{(r^{2}+1)^{1\over 2}\over c})-cos^{2}(mt)sin^{2}(m{(r^{2}+1)^{1\over 2}\over c})$\\

$+2sin(mt)cos(mt)sin(m{(r^{2}+1)^{1\over 2}\over c})cos(m{(r^{2}+1)^{1\over 2}\over c})]\Gamma\times\Gamma'+O({1\over r^{3}})$\\

Similarly, by $(****),(*****)$;\\

$\overline{E}_{3,e}=\overline{E}_{3,e}^{1}+\overline{E}_{3,e}^{2}$\\

$=\gamma mcos(mt)sin(m{(r^{2}+1)^{1\over 2}\over c})\Gamma''-\gamma msin(mt)cos(m{(r^{2}+1)^{1\over 2}\over c})\Gamma''+O({1\over r^{2}})$\\

where $\gamma={1\over 4\pi\epsilon_{0}c^{2}(r^{2}+1)^{1\over 2}}$ and;\\

$\Gamma''=\int_{-\pi}^{\pi}sin(m\theta)sin({mxcos(\theta)+mysin(\theta)\over (r^{2}+1)^{1\over 2}})(-sin(\theta),cos(\theta),0)d\theta$\\

If $m$ is even, it follows that;\\

$\overline{E}_{3,e}\times \overline{B}_{2,e}$\\

$=\overline{E}_{3,e}^{1}\times \overline{B}_{2,e}^{1}+\overline{E}_{3,e}^{2}\times \overline{B}_{2,e}^{1}+\overline{E}_{3,e}^{1}\times\overline{B}_{2,e}^{2}+\overline{E}_{3,e}^{2}\times\overline{B}_{2,e}^{2}+O({1\over r^{3}})$\\

$=-\beta\gamma m^{2}cos^{2}(mt)sin^{2}(m{(r^{2}+1)^{1\over 2}\over c})\Gamma''\times\Gamma'$\\

$+\beta\gamma m^{2}sin(mt)cos(mt)sin(m{(r^{2}+1)^{1\over 2}\over c})cos(m{(r^{2}+1)^{1\over 2}\over c})\Gamma''\times\Gamma'$\\

$+\beta\gamma m^{2}sin(mt)cos(mt)sin(m{(r^{2}+1)^{1\over 2}\over c})cos(m{(r^{2}+1)^{1\over 2}\over c})\Gamma''\times\Gamma'$\\

$-\beta\gamma m^{2}sin^{2}(mt)cos^{2}(m{(r^{2}+1)^{1\over 2}\over c})\Gamma''\times\Gamma'+O({1\over r^{3}})$\\

$=\beta\gamma m^{2}[-sin^{2}(mt)cos^{2}(m{(r^{2}+1)^{1\over 2}\over c})-cos^{2}(mt)sin^{2}(m{(r^{2}+1)^{1\over 2}\over c})$\\

$+2sin(mt)cos(mt)sin(m{(r^{2}+1)^{1\over 2}\over c})cos(m{(r^{2}+1)^{1\over 2}\over c})]\Gamma''\times\Gamma'+O({1\over r^{3}})$\\

If $m$ is odd, by $(*)$ and $(***)$ we have that;\\

$\overline{E}_{2,o}=\overline{E}_{2,o}^{1}+\overline{E}_{2,o}^{2}+O({1\over r^{2}})$\\

$=\alpha msin(mt)sin(m{(r^{2}+1)^{1\over 2}\over c})\Gamma'''+\alpha mcos(mt)cos(m{(r^{2}+1)^{1\over 2}\over c})\Gamma'''$\\

where $\Gamma'''=\int_{-\pi}^{\pi}cos(m\theta)sin({mxcos(\theta)+mysin(\theta)\over (r^{2}+1)^{1\over 2}})(x,y,z)d\theta$\\

If $m$ is odd, by $(*******),(*********)$;\\

$\overline{B}_{2,o}=\overline{B}_{2,o}^{1}+\overline{B}_{2,o}^{2}+O({1\over r^{2}})$\\

$=\beta mcos(mt)cos(m{(r^{2}+1)^{1\over 2}\over c})\Gamma''''+\beta msin(mt)sin(m{(r^{2}+1)^{1\over 2}\over c})\Gamma''''+O({1\over r^{2}})$\\

where;\\

$\Gamma''''=\int_{-\pi}^{\pi}sin(m\theta)cos({mxcos(\theta)+mysin(\theta)\over (r^{2}+1)^{1\over 2}})(cos(\theta)z,sin(\theta)z,-sin(\theta)y-cos(\theta)x)d\theta$\\

It follows that;\\

$\overline{E}_{2,o}\times \overline{B}_{2,o}$\\

$=\overline{E}_{2,o}^{1}\times \overline{B}_{2,o}^{1}+\overline{E}_{2,o}^{2}\times \overline{B}_{2,o}^{1}+\overline{E}_{2,o}^{1}\times\overline{B}_{2,o}^{2}+\overline{E}_{2,o}^{2}\times\overline{B}_{2,o}^{2}+O({1\over r^{3}})$\\

$=\alpha\beta m^{2}sin(mt)cos(mt)sin(m{(r^{2}+1)^{1\over 2}\over c})cos(m{(r^{2}+1)^{1\over 2}\over c})\Gamma'''\times\Gamma''''$\\

$+\alpha\beta m^{2}cos^{2}(mt)cos^{2}(m{(r^{2}+1)^{1\over 2}\over c})\Gamma'''\times\Gamma''''$\\

$+\alpha\beta m^{2}sin^{2}(mt)sin^{2}(m{(r^{2}+1)^{1\over 2}\over c})\Gamma'''\times\Gamma''''$\\

$+\alpha\beta m^{2}sin(mt)cos(mt)sin(m{(r^{2}+1)^{1\over 2}\over c})cos(m{(r^{2}+1)^{1\over 2}\over c})\Gamma'''\times\Gamma''''+O({1\over r^{3}})$\\

$=\alpha\beta m^{2}[cos^{2}(mt)cos^{2}(m{(r^{2}+1)^{1\over 2}\over c})+sin^{2}(mt)sin^{2}(m{(r^{2}+1)^{1\over 2}\over c})$\\

$+2sin(mt)cos(mt)sin(m{(r^{2}+1)^{1\over 2}\over c})cos(m{(r^{2}+1)^{1\over 2}\over c})]\Gamma'''\times\Gamma''''+O({1\over r^{3}})$\\

Similarly, by $(****),(******)$;\\

$\overline{E}_{3,o}=\overline{E}_{3,o}^{1}+\overline{E}_{3,o}^{2}$\\

$=-\gamma mcos(mt)cos(m{(r^{2}+1)^{1\over 2}\over c})\Gamma'''''-\gamma msin(mt)sin(m{(r^{2}+1)^{1\over 2}\over c})\Gamma'''''+O({1\over r^{2}})$\\

where;\\

$\Gamma'''''=\int_{-\pi}^{\pi}sin(m\theta)cos({mxcos(\theta)+mysin(\theta)\over (r^{2}+1)^{1\over 2}})(-sin(\theta),cos(\theta),0)d\theta$\\

If $m$ is odd, it follows that;\\

$\overline{E}_{3,o}\times \overline{B}_{2,o}$\\

$=\overline{E}_{3,o}^{1}\times \overline{B}_{2,o}^{1}+\overline{E}_{3,o}^{2}\times \overline{B}_{2,o}^{1}+\overline{E}_{3,o}^{1}\times\overline{B}_{2,o}^{2}+\overline{E}_{3,o}^{2}\times\overline{B}_{2,o}^{2}+O({1\over r^{3}})$\\

$=-\beta\gamma m^{2}cos^{2}(mt)cos^{2}(m{(r^{2}+1)^{1\over 2}\over c})\Gamma'''''\times\Gamma''''$\\

$-\beta\gamma m^{2}sin(mt)cos(mt)sin(m{(r^{2}+1)^{1\over 2}\over c})cos(m{(r^{2}+1)^{1\over 2}\over c})\Gamma'''''\times\Gamma''''$\\

$-\beta\gamma m^{2}sin(mt)cos(mt)sin(m{(r^{2}+1)^{1\over 2}\over c})cos(m{(r^{2}+1)^{1\over 2}\over c})\Gamma'''''\times\Gamma''''$\\

$-\beta\gamma m^{2}sin^{2}(mt)sin^{2}(m{(r^{2}+1)^{1\over 2}\over c})\Gamma'''''\times\Gamma''''+O({1\over r^{3}})$\\

$=\beta\gamma m^{2}[-cos^{2}(mt)cos^{2}(m{(r^{2}+1)^{1\over 2}\over c})-sin^{2}(mt)sin^{2}(m{(r^{2}+1)^{1\over 2}\over c})$\\

$-2sin(mt)cos(mt)sin(m{(r^{2}+1)^{1\over 2}\over c})cos(m{(r^{2}+1)^{1\over 2}\over c})]\Gamma'''''\times\Gamma''''+O({1\over r^{3}})$\\

We compute $\Gamma\times\Gamma'$. By $(**)$ and $(********)$, we have that;\\

$\Gamma=\sum_{w=0}^{\infty}\sum_{s\leq m,s, even}(-1)^{s\over 2}C^{m}_{s}{(-1)^{w}m^{2w}\over (2w)!(r^{2}+1)^{w}}$\\

$\sum_{v=0,v,even}^{2w}C^{2w}_{v}x^{2w-v}y^{v}(x,y,z)I_{2w+m-s-v,s+v}$\\

$\Gamma'=\sum_{w=0}^{\infty}\sum_{s\leq m,s, odd}(-1)^{s-1\over 2}C^{m}_{s}{(-1)^{w}m^{2w+1}\over (2w+1)!(r^{2}+1)^{w+{1\over 2}}}$\\

$(\sum_{v=0,v,odd}^{2w+1}C^{2w+1}_{v}x^{2w+1-v}y^{v}zI_{2w+2+m-v-s}^{s+v},\sum_{v=0,v,even}^{2w+1}C^{2w+1}_{v}x^{2w+1-v}y^{v}zI_{2w+1+m-v-s}^{s+v+1},$\\

$-\sum_{v=0,v,even}^{2w+1}C^{2w+1}_{v}x^{2w+1-v}y^{v+1}I_{2w+1+m-v-s}^{s+v+1}-\sum_{v=0,v,odd}^{2w+1}C^{2w+1}_{v}x^{2w+2-v}y^{v}I_{2w+2+m-v-s}^{s+v})$\\

so that;\\

$\Gamma\times\Gamma'=\sum_{w=0}^{\infty}\sum_{w'=0}^{\infty}\sum_{s\leq m,s, even,s'\leq m,s',odd}(-1)^{s\over 2}C^{m}_{s}{(-1)^{w}m^{2w}\over (2w)!(r^{2}+1)^{w}}(-1)^{s'-1\over 2}C^{m}_{s'}{(-1)^{w'}m^{2w'+1}\over (2w'+1)!(r^{2}+1)^{w'+{1\over 2}}}$\\

$(-\sum_{v=0,v,even,v'=0,v',even}^{2w,2w'+1}C^{2w}_{v}C^{2w'+1}_{v'}x^{2w+2w'+1-v-v'}y^{v+v'+2}I_{2w+m-s-v}^{s+v}I_{2w'+1+m-v'-s'}^{s'+v'+1}$\\

$-\sum_{v=0,v,even,v'=0,v',odd}^{2w,2w'+1}C^{2w}_{v}C^{2w'+1}_{v'}x^{2w+2w'+2-v-v'}y^{v+v'+1}I_{2w+m-s-v}^{s+v}I_{2w'+2-m-v'-s'}^{s'+v'}$\\

$-\sum_{v=0,v,even,v'=0,v',even}^{2w,2w'+1}C^{2w}_{v}C^{2w'+1}_{v'}x^{2w+2w'+1-v-v'}y^{v+v'}z^{2}I_{2w+m-s-v}^{s+v}I_{2w'+1-m-v'-s'}^{s'+v'+1},$\\

$+\sum_{v=0,v,even,v'=0,v',even}^{2w,2w'+1}C^{2w}_{v}C^{2w'+1}_{v'}x^{2w+2w'+2-v-v'}y^{v+v'+1}I_{2w+m-s-v}^{s+v}I_{2w'+1+m-v'-s'}^{s'+v'+1}$\\

$+\sum_{v=0,v,even,v'=0,v',odd}^{2w,2w'+1}C^{2w}_{v}C^{2w'+1}_{v'}x^{2w+2w'+3-v-v'}y^{v+v'}I_{2w+m-s-v}^{s+v}I_{2w'+2+m-v'-s'}^{s'+v'}$\\

$+\sum_{v=0,v,even,v'=0,v',odd}^{2w,2w'+1}C^{2w}_{v}C^{2w'+1}_{v'}x^{2w+2w'+1-v-v'}y^{v+v'}z^{2}I_{2w+m-s-v}^{s+v}I_{2w'+2+m-v'-s'}^{s'+v'},$\\

$+\sum_{v=0,v,even,v'=0,v',even}^{2w,2w'+1}C^{2w}_{v}C^{2w'+1}_{v'}x^{2w+2w'+2-v-v'}y^{v+v'}zI_{2w+m-s-v}^{s+v}I_{2w'+1+m-v'-s'}^{s'+v'+1}$\\

$-\sum_{v=0,v,even,v'=0,v',odd}^{2w,2w'+1}C^{2w}_{v}C^{2w'+1}_{v'}x^{2w+2w'+1-v-v'}y^{v+v'+1}zI_{2w+m-s-v}^{s+v}I_{2w'+2+m-v'-s'}^{s'+v'})$ $(\dag)$\\

By $(*****)$, we have that;\\

$\Gamma''=\sum_{w=0}^{\infty}\sum_{s\leq m,s, odd}(-1)^{s-1\over 2}C^{m}_{s}{(-1)^{w}m^{2w+1}\over (2w+1)!(r^{2}+1)^{w+{1\over 2}}}$\\

$(-\sum_{v=0,v,even}^{2w+1}C^{2w+1}_{v}x^{2w+1-v}y^{v}I_{2w+1+m-v-s}^{v+s+1},\sum_{v=0,v,odd}^{2w+1}C^{2w+1}_{v}x^{2w+1-v}y^{v}I_{2w+2+m-v-s}^{v+s},0)$\\

It follows that;\\

$\Gamma''\times\Gamma'$\\

$=\sum_{w=0}^{\infty}\sum_{w'=0}^{\infty}\sum_{s\leq m,s,odd,s'\leq m,s',odd}(-1)^{s-1\over 2}C^{m}_{s}{(-1)^{w}m^{2w+1}\over (2w+1)!(r^{2}+1)^{w+{1\over 2}}}(-1)^{s'-1\over 2}C^{m}_{s'}{(-1)^{w'}m^{2w'+1}\over (2w'+1)!(r^{2}+1)^{w'+{1\over 2}}}$\\

$(-\sum_{v=0,v,odd,v'=0,v',even}^{2w+1,2w'+1}C^{2w+1}_{v}C^{2w'+1}_{v'}x^{2w+2w'+2-v-v'}y^{v+v'+1}I_{2w+2+m-s-v}^{s+v}I_{2w'+1+m-v'-s'}^{s'+v'+1}$\\

$-\sum_{v=0,v,odd,v'=0,v',odd}^{2w+1,2w'+1}C^{2w+1}_{v}C^{2w'+1}_{v'}x^{2w+2w'+3-v-v'}y^{v+v'}I_{2w+2+m-s-v}^{s+v}I_{2w'+2-m-v'-s'}^{s'+v'},$\\

$-\sum_{v=0,v,even,v'=0,v',even}^{2w+1,2w'+1}C^{2w+1}_{v}C^{2w'+1}_{v'}x^{2w+2w'+2-v-v'}y^{v+v'+1}I_{2w+1+m-s-v}^{s+v+1}I_{2w'+1+m-v'-s'}^{s'+v'+1}$\\

$-\sum_{v=0,v,even,v'=0,v',odd}^{2w+1,2w'+1}C^{2w+1}_{v}C^{2w'+1}_{v'}x^{2w+2w'+3-v-v'}y^{v+v'}I_{2w+1+m-s-v}^{s+v+1}I_{2w'+2+m-v'-s'}^{s'+v'},$\\

$-\sum_{v=0,v,even,v'=0,v',even}^{2w+1,2w'+1}C^{2w+1}_{v}C^{2w'+1}_{v'}x^{2w+2w'+2-v-v'}y^{v+v'}zI_{2w+1+m-s-v}^{s+v+1}I_{2w'+1+m-v'-s'}^{s'+v'+1}$\\

$-\sum_{v=0,v,odd,v'=0,v',odd}^{2w+1,2w'+1}C^{2w+1}_{v}C^{2w'+1}_{v'}x^{2w+2w'+2-v-v'}y^{v+v'}zI_{2w+1++m-s-v}^{s+v+1}I_{2w'+2+m-v'-s'}^{s'+v'})$ $(\dag\dag)$\\

We now compute the flux of the Poynting vectors over the sphere $S(r)$. We have that $x=rsin(\phi)cos(\theta)$, $y=rsin(\phi)sin(\theta)$, $z=rcos(\phi)$ with coordinates $0\leq \phi<\pi$ and $-\pi\leq \theta<\pi$. We have that;\\

$r_{\phi}\times r_{\theta}=(rcos(\phi)cos(\theta),rcos(\phi)sin(\theta),-rsin(\phi))\times (-rsin(\phi)sin(\theta),rsin(\phi)cos(\theta),0)$\\

$=r^{2}(sin^{2}(\phi)cos(\theta),sin^{2}(\phi)sin(\theta),sin(\phi)cos(\phi)$\\

so that;\\

$d\overline{S}=\hat{\overline{n}}dS=r^{2}(sin^{2}(\phi)cos(\theta),sin^{2}(\phi)sin(\theta),sin(\phi)cos(\phi))d\phi d\theta$\\

and;\\

$\int_{S(r)}(\Gamma\times\Gamma')\centerdot d\overline{S}$\\

$=\int_{0}^{\pi}\int_{-\pi}^{\pi}(\Gamma\times\Gamma')|_{(rsin(\phi)cos(\theta),rsin(\phi)sin(\theta),rcos(\phi))}\centerdot r^{2}(sin^{2}(\phi)cos(\theta),$\\

$sin^{2}(\phi)sin(\theta),sin(\phi)cos(\phi)d\theta d\phi$\\

Applying this to $(\dag)$ gives;\\

$\int_{S(r)}(\Gamma\times\Gamma')\centerdot d\overline{S}$\\

$=\sum_{w=0}^{\infty}\sum_{w'=0}^{\infty}\sum_{s\leq m,s, even,s'\leq m,s',odd}(-1)^{s\over 2}C^{m}_{s}{(-1)^{w}m^{2w}\over (2w)!(r^{2}+1)^{w}}(-1)^{s'-1\over 2}C^{m}_{s'}{(-1)^{w'}m^{2w'+1}\over (2w'+1)!(r^{2}+1)^{w'+{1\over 2}}}$\\

$(-\sum_{v=0,v,even,v'=0,v',even}^{2w,2w'+1}C^{2w}_{v}C^{2w'+1}_{v'}I_{2w+m-s-v}^{s+v}I_{2w'+1+m-v'-s'}^{s'+v'+1}$\\

$\int_{0}^{\pi}\int_{-\pi}^{\pi}r^{2w+2w'+5}sin^{2w+2w'+5}(\phi)cos^{2w+2w'+2-v-v'}(\theta)sin^{v+v'+2}(\theta)d\theta d\phi$\\

$-\sum_{v=0,v,even,v'=0,v',odd}^{2w,2w'+1}C^{2w}_{v}C^{2w'+1}_{v'}I_{2w+m-s-v}^{s+v}I_{2w'+2-m-v'-s'}^{s'+v'}$\\

$\int_{0}^{\pi}\int_{-\pi}^{\pi}r^{2w+2w'+5}sin^{2w+2w'+5}(\phi)cos^{2w+2w'+3-v-v'}(\theta)sin^{v+v'+1}(\theta)d\theta d\phi$\\

$-\sum_{v=0,v,even,v'=0,v',even}^{2w,2w'+1}C^{2w}_{v}C^{2w'+1}_{v'}I_{2w+m-s-v}^{s+v}I_{2w'+1-m-v'-s'}^{s'+v'+1}$\\

$\int_{0}^{\pi}\int_{-\pi}^{\pi}r^{2w+2w'+5}sin^{2w+2w'+3}cos^{2}(\phi)cos^{2w+2w'+2-v-v'}(\theta)sin^{v+v'}(\theta)d\theta d\phi$\\

$+\sum_{v=0,v,even,v'=0,v',even}^{2w,2w'+1}C^{2w}_{v}C^{2w'+1}_{v'}I_{2w+m-s-v}^{s+v}I_{2w'+1+m-v'-s'}^{s'+v'+1}$\\

$\int_{0}^{\pi}\int_{-\pi}^{\pi}r^{2w+2w'+5}sin^{2w+2w'+5}(\phi)cos^{2w+2w'+2-v-v'}(\theta)sin^{v+v'+2}(\theta)d\theta d\phi$\\

$+\sum_{v=0,v,even,v'=0,v',odd}^{2w,2w'+1}C^{2w}_{v}C^{2w'+1}_{v'}I_{2w+m-s-v}^{s+v}I_{2w'+2+m-v'-s'}^{s'+v'}$\\

$\int_{0}^{\pi}\int_{-\pi}^{\pi}r^{2w+2w'+5}sin^{2w+2w'+5}(\phi)cos^{2w+2w'+3-v-v'}(\theta)sin^{v+v'+1}(\theta)d\theta d\phi$\\

$+\sum_{v=0,v,even,v'=0,v',odd}^{2w,2w'+1}C^{2w}_{v}C^{2w'+1}_{v'}I_{2w+m-s-v}^{s+v}I_{2w'+2+m-v'-s'}^{s'+v'}$\\

$\int_{0}^{\pi}\int_{-\pi}^{\pi}r^{2w+2w'+5}sin^{2w+2w'+3}(\phi)cos^{2}(\phi)cos^{2w+2w'+1-v-v'}(\theta)sin^{v+v'+1}(\theta)d\theta d\phi$\\

$+\sum_{v=0,v,even,v'=0,v',even}^{2w,2w'+1}C^{2w}_{v}C^{2w'+1}_{v'}I_{2w+m-s-v}^{s+v}I_{2w'+1+m-v'-s'}^{s'+v'+1}$\\

$\int_{0}^{\pi}\int_{-\pi}^{\pi}r^{2w+2w'+5}sin^{2w+2w'+3}(\phi)cos^{2}(\phi)cos^{2w+2w'+2-v-v'}(\theta)sin^{v+v'}(\theta)d\theta d\phi$\\

$-\sum_{v=0,v,even,v'=0,v',odd}^{2w,2w'+1}C^{2w}_{v}C^{2w'+1}_{v'}I_{2w+m-s-v}^{s+v}I_{2w'+2+m-v'-s'}^{s'+v'}$\\

$\int_{0}^{\pi}\int_{-\pi}^{\pi}r^{2w+2w'+5}sin^{2w+2w'+3}(\phi)cos^{2}(\phi)cos^{2w+2w'+1-v-v'}(\theta)sin^{v+v'+1}(\theta)d\theta d\phi$\\

$(\dag\dag\dag)$\\

An inspection of $(\dag\dag\dag)$ shows that the terms 1 and 4, 2 and 5, 3 and 7, and 6 and 8 cancel. This proves that;\\

$\int_{S(r)}(\Gamma\times\Gamma')\centerdot d\overline{S}=0$\\

Applying the same method to $(\dag\dag)$, we have that;\\

$\int_{S(r)}(\Gamma''\times\Gamma')\centerdot d\overline{S}$\\

$=\sum_{w=0}^{\infty}\sum_{w'=0}^{\infty}\sum_{s\leq m,s,odd,s'\leq m,s',odd}(-1)^{s-1\over 2}C^{m}_{s}{(-1)^{w}m^{2w+1}\over (2w+1)!(r^{2}+1)^{w+{1\over 2}}}(-1)^{s'-1\over 2}C^{m}_{s'}{(-1)^{w'}m^{2w'+1}\over (2w'+1)!(r^{2}+1)^{w'+{1\over 2}}}$\\

$-\sum_{v=0,v,odd,v'=0,v',even}^{2w+1,2w'+1}C^{2w+1}_{v}C^{2w'+1}_{v'}I_{2w+2+m-s-v}^{s+v}I_{2w'+1+m-v'-s'}^{s'+v'+1}$\\

$\int_{0}^{\pi}\int_{-\pi}^{\pi}r^{2w+2w'+5}sin^{2w+2w'+5}(\phi)cos^{2w+2w'+3-v-v'}(\theta)sin^{v+v'+1}(\theta)d\theta d\phi$\\

$-\sum_{v=0,v,odd,v'=0,v',odd}^{2w+1,2w'+1}C^{2w+1}_{v}C^{2w'+1}_{v'}I_{2w+2+m-s-v}^{s+v}I_{2w'+2-m-v'-s'}^{s'+v'}$\\

$\int_{0}^{\pi}\int_{-\pi}^{\pi}r^{2w+2w'+5}sin^{2w+2w'+5}(\phi)cos^{2w+2w'+4-v-v'}(\theta)sin^{v+v'}(\theta)d\theta d\phi$\\

$-\sum_{v=0,v,even,v'=0,v',even}^{2w+1,2w'+1}C^{2w+1}_{v}C^{2w'+1}_{v'}I_{2w+1+m-s-v}^{s+v+1}I_{2w'+1+m-v'-s'}^{s'+v'+1}$\\

$\int_{0}^{\pi}\int_{-\pi}^{\pi}r^{2w+2w'+5}sin^{2w+2w'+5}(\phi)cos^{2w+2w'+2-v-v'}(\theta)sin^{v+v'+2}(\theta)d\theta d\phi$\\

$-\sum_{v=0,v,even,v'=0,v',odd}^{2w+1,2w'+1}C^{2w+1}_{v}C^{2w'+1}_{v'}I_{2w+1+m-s-v}^{s+v+1}I_{2w'+2+m-v'-s'}^{s'+v'}$\\

$\int_{0}^{\pi}\int_{-\pi}^{\pi}r^{2w+2w'+5}sin^{2w+2w'+5}(\phi)cos^{2w+2w'+3-v-v'}(\theta)sin^{v+v'+1}(\theta)d\theta d\phi$\\

$-\sum_{v=0,v,even,v'=0,v',even}^{2w+1,2w'+1}C^{2w+1}_{v}C^{2w'+1}_{v'}I_{2w+1+m-s-v}^{s+v+1}I_{2w'+1+m-v'-s'}^{s'+v'+1}$\\

$\int_{0}^{\pi}\int_{-\pi}^{\pi}r^{2w+2w'+5}sin^{2w+2w'+3}(\phi)cos^{2}(\phi)cos^{2w+2w'+2-v-v'}(\theta)sin^{v+v'}(\theta)d\theta d\phi$\\

$-\sum_{v=0,v,odd,v'=0,v',odd}^{2w+1,2w'+1}C^{2w+1}_{v}C^{2w'+1}_{v'}I_{2w+1++m-s-v}^{s+v+1}I_{2w'+2+m-v'-s'}^{s'+v'})$\\

$\int_{0}^{\pi}\int_{-\pi}^{\pi}r^{2w+2w'+5}sin^{2w+2w'+3}(\phi)cos^{2}(\phi)cos^{2w+2w'+2-v-v'}(\theta)sin^{v+v'}(\theta)d\theta d\phi$\\

$(\dag\dag\dag\dag)$\\

Using the notation $I_{\alpha}^{\beta}$ again, and letting;\\

$J_{\gamma}={2^{\gamma+1}({\gamma-1\over 2})!^{2}\over \gamma!}=\int_{0}^{\pi}sin^{\gamma}(\phi)d\phi$\\

for $\gamma$ odd, we obtain that;\\

$\int_{S(r)}(\Gamma''\times\Gamma')\centerdot d\overline{S}$\\

$=\sum_{w=0}^{\infty}\sum_{w'=0}^{\infty}\sum_{s\leq m,s,odd,s'\leq m,s',odd}(-1)^{s-1\over 2}C^{m}_{s}{(-1)^{w}m^{2w+1}\over (2w+1)!(r^{2}+1)^{w+{1\over 2}}}$\\

$(-1)^{s'-1\over 2}C^{m}_{s'}{(-1)^{w'}m^{2w'+1}\over (2w'+1)!(r^{2}+1)^{w'+{1\over 2}}}r^{2w+2w'+5}$\\

$(-\sum_{v=0,v,odd,v'=0,v',even}^{2w+1,2w'+1}C^{2w+1}_{v}C^{2w'+1}_{v'}I_{2w+2+m-s-v}^{s+v}I_{2w'+1+m-v'-s'}^{s'+v'+1}I_{2w+2w'+3-v-v'}^{v+v'+1}J_{2w+2w'+5}$\\

$-\sum_{v=0,v,odd,v'=0,v',odd}^{2w+1,2w'+1}C^{2w+1}_{v}C^{2w'+1}_{v'}I_{2w+2+m-s-v}^{s+v}I_{2w'+2-m-v'-s'}^{s'+v'}I_{2w+2w'+4-v-v'}^{v+v'}J_{2w+2w'+5}$\\

$-\sum_{v=0,v,even,v'=0,v',even}^{2w+1,2w'+1}C^{2w+1}_{v}C^{2w'+1}_{v'}I_{2w+1+m-s-v}^{s+v+1}I_{2w'+1+m-v'-s'}^{s'+v'+1}I_{2w+2w'+2-v-v'}^{v+v'+2}J_{2w+2w'+5}$\\

$-\sum_{v=0,v,even,v'=0,v',odd}^{2w+1,2w'+1}C^{2w+1}_{v}C^{2w'+1}_{v'}I_{2w+1+m-s-v}^{s+v+1}I_{2w'+2+m-v'-s'}^{s'+v'}I_{2w+2w'+3-v-v'}^{v+v'+1}J_{2w+2w'+5}$\\

$-\sum_{v=0,v,even,v'=0,v',even}^{2w+1,2w'+1}C^{2w+1}_{v}C^{2w'+1}_{v'}I_{2w+1+m-s-v}^{s+v+1}I_{2w'+1+m-v'-s'}^{s'+v'+1}I_{2w+2w'+2-v-v'}^{v+v'}(J_{2w+2w'+3}-J_{2w+2w'+5})$\\

$-\sum_{v=0,v,odd,v'=0,v',odd}^{2w+1,2w'+1}C^{2w+1}_{v}C^{2w'+1}_{v'}I_{2w+1++m-s-v}^{s+v+1}I_{2w'+2+m-v'-s'}^{s'+v'}I_{2w+2w'+2-v-v'}^{v+v'}(J_{2w+2w'+3}-J_{2w+2w'+5}))$\\

$=\sum_{w=0}^{\infty}\sum_{w'=0}^{\infty}\sum_{s\leq m,s,odd,s'\leq m,s',odd}(-1)^{s-1\over 2}C^{m}_{s}{(-1)^{w}m^{2w+1}\over (2w+1)!(r^{2}+1)^{w+{1\over 2}}}$\\

$(-1)^{s'-1\over 2}C^{m}_{s'}{(-1)^{w'}m^{2w'+1}\over (2w'+1)!(r^{2}+1)^{w'+{1\over 2}}}r^{2w+2w'+5}c_{w,w',s,s',m}$\\

where we have abbreviated the term in brackets to $c_{w,w',s,s',m}<0$.\\

We compute $\Gamma'''\times \Gamma''''$. By $(***)$ and $(*********)$;\\

$\Gamma'''=\sum_{w=0}^{\infty}\sum_{s\leq m,s, even}(-1)^{s\over 2}C^{m}_{s}{(-1)^{w}m^{2w+1}\over (2w+1)!(r^{2}+1)^{w+{1\over 2}}}$\\

$\sum_{v=0,v,even}^{2w+1}C^{2w+1}_{v}x^{2w+1-v}y^{v}(x,y,z)I_{2w+1+m-s-v}^{s+v}$\\

and;\\

$\Gamma''''=\sum_{w=0}^{\infty}\sum_{s\leq m,s, odd}(-1)^{s-1\over 2}C^{m}_{s}{(-1)^{w}m^{2w}\over (2w)!(r^{2}+1)^{w}}$\\

$(\sum_{v=0,v,odd}^{2w}C^{2w}_{v}x^{2w-v}y^{v}zI_{2w+1+m-v-s}^{s+v},\sum_{v=0,v,even}^{2w}C^{2w}_{v}x^{2w-v}y^{v}zI_{2w+m-v-s}^{s+v+1},$\\

$-\sum_{v=0,v,even}^{2w}C^{2w}_{v}x^{2w-v}y^{v+1}I_{2w+m-v-s}^{s+v+1}-\sum_{v=0,v,odd}^{2w}C^{2w}_{v}x^{2w+1-v}y^{v}I_{2w+1+m-v-s}^{s+v})$\\

so that;\\

$\Gamma'''\times\Gamma''''=\sum_{w=0}^{\infty}\sum_{w'=0}^{\infty}\sum_{s\leq m,s, even,s'\leq m,s',odd}(-1)^{s\over 2}C^{m}_{s}{(-1)^{w}m^{2w+1}\over (2w+1)!(r^{2}+1)^{w+{1\over 2}}}(-1)^{s'-1\over 2}C^{m}_{s'}{(-1)^{w'}m^{2w'}\over (2w')!(r^{2}+1)^{w'}}$\\

$(-\sum_{v=0,v,even,v'=0,v',even}^{2w+1,2w'}C^{2w+1}_{v}C^{2w'}_{v'}x^{2w+2w'+1-v-v'}y^{v+v'+2}I_{2w+1+m-s-v}^{s+v}I_{2w'+m-v'-s'}^{s'+v'+1}$\\

$-\sum_{v=0,v,even,v'=0,v',odd}^{2w+1,2w'}C^{2w+1}_{v}C^{2w'}_{v'}x^{2w+2w'+2-v-v'}y^{v+v'+1}I_{2w+1+m-s-v}^{s+v}I_{2w'+1+m-v'-s'}^{s'+v'}$\\

$-\sum_{v=0,v,even,v'=0,v',even}^{2w+1,2w'}C^{2w+1}_{v}C^{2w'}_{v'}x^{2w+2w'+1-v-v'}y^{v+v'}z^{2}I_{2w+1+m-s-v}^{s+v}I_{2w'+m-v'-s'}^{s'+v'+1},$\\

$+\sum_{v=0,v,even,v'=0,v',even}^{2w+1,2w'}C^{2w+1}_{v}C^{2w'}_{v'}x^{2w+2w'+2-v-v'}y^{v+v'+1}I_{2w+1+m-s-v}^{s+v}I_{2w'+m-v'-s'}^{s'+v'+1}$\\

$+\sum_{v=0,v,even,v'=0,v',odd}^{2w+1,2w'}C^{2w+1}_{v}C^{2w'}_{v'}x^{2w+2w'+3-v-v'}y^{v+v'}I_{2w+1+m-s-v}^{s+v}I_{2w'+1+m-v'-s'}^{s'+v'}$\\

$+\sum_{v=0,v,even,v'=0,v',odd}^{2w+1,2w'}C^{2w+1}_{v}C^{2w'}_{v'}x^{2w+2w'+1-v-v'}y^{v+v'}z^{2}I_{2w+1+m-s-v}^{s+v}I_{2w'+1+m-v'-s'}^{s'+v'},$\\

$+\sum_{v=0,v,even,v'=0,v',even}^{2w+1,2w'}C^{2w+1}_{v}C^{2w'}_{v'}x^{2w+2w'+2-v-v'}y^{v+v'}zI_{2w+1+m-s-v}^{s+v}I_{2w'+m-v'-s'}^{s'+v'+1}$\\

$-\sum_{v=0,v,even,v'=0,v',odd}^{2w+1,2w'}C^{2w+1}_{v}C^{2w'}_{v'}x^{2w+2w'+1-v-v'}y^{v+v'+1}zI_{2w+1+m-s-v}^{s+v}I_{2w'+1+m-v'-s'}^{s'+v'})$ $(\dag\dag\dag\dag\dag)$\\

Integrating $(\dag\dag\dag\dag\dag)$, we obtain;\\

$\int_{S(r)}(\Gamma'''\times\Gamma'''')\centerdot d\overline{S}$\\

$=\sum_{w=0}^{\infty}\sum_{w'=0}^{\infty}\sum_{s\leq m,s, even,s'\leq m,s',odd}(-1)^{s\over 2}C^{m}_{s}{(-1)^{w}m^{2w+1}\over (2w+1)!(r^{2}+1)^{w+{1\over 2}}}(-1)^{s'-1\over 2}C^{m}_{s'}{(-1)^{w'}m^{2w'}\over (2w')!(r^{2}+1)^{w'}}$\\

$(-\sum_{v=0,v,even,v'=0,v',even}^{2w+1,2w'}C^{2w+1}_{v}C^{2w'}_{v'}I_{2w+1+m-s-v}^{s+v}I_{2w'+m-v'-s'}^{s'+v'+1}$\\

$\int_{0}^{\pi}\int_{-\pi}^{\pi}r^{2w+2w'+5}sin^{2w+2w'+5}(\phi)cos^{2w+2w'+2-v-v'}(\theta)sin^{v+v'+2}(\theta)d\theta d\phi$\\

$-\sum_{v=0,v,even,v'=0,v',odd}^{2w+1,2w'}C^{2w+1}_{v}C^{2w'}_{v'}I_{2w+1+m-s-v}^{s+v}I_{2w'+1+m-v'-s'}^{s'+v'}$\\

$\int_{0}^{\pi}\int_{-\pi}^{\pi}r^{2w+2w'+5}sin^{2w+2w'+5}(\phi)cos^{2w+2w'+3-v-v'}(\theta)sin^{v+v'+1}(\theta)d\theta d\phi$\\

$-\sum_{v=0,v,even,v'=0,v',even}^{2w+1,2w'}C^{2w+1}_{v}C^{2w'}_{v'}I_{2w+1+m-s-v}^{s+v}I_{2w'+m-v'-s'}^{s'+v'+1}$\\

$\int_{0}^{\pi}\int_{-\pi}^{\pi}r^{2w+2w'+5}sin^{2w+2w'+3}cos^{2}(\phi)cos^{2w+2w'+2-v-v'}(\theta)sin^{v+v'}(\theta)d\theta d\phi$\\

$+\sum_{v=0,v,even,v'=0,v',even}^{2w+1,2w'}C^{2w+1}_{v}C^{2w'}_{v'}I_{2w+1+m-s-v}^{s+v}I_{2w'+m-v'-s'}^{s'+v'+1}$\\

$\int_{0}^{\pi}\int_{-\pi}^{\pi}r^{2w+2w'+5}sin^{2w+2w'+5}(\phi)cos^{2w+2w'+2-v-v'}(\theta)sin^{v+v'+2}(\theta)d\theta d\phi$\\

$+\sum_{v=0,v,even,v'=0,v',odd}^{2w+1,2w'}C^{2w+1}_{v}C^{2w'}_{v'}I_{2w+1+m-s-v}^{s+v}I_{2w'+1+m-v'-s'}^{s'+v'}$\\

$\int_{0}^{\pi}\int_{-\pi}^{\pi}r^{2w+2w'+5}sin^{2w+2w'+5}(\phi)cos^{2w+2w'+3-v-v'}(\theta)sin^{v+v'+1}(\theta)d\theta d\phi$\\

$+\sum_{v=0,v,even,v'=0,v',odd}^{2w+1,2w'}C^{2w+1}_{v}C^{2w'}_{v'}I_{2w+1+m-s-v}^{s+v}I_{2w'+1+m-v'-s'}^{s'+v'}$\\

$\int_{0}^{\pi}\int_{-\pi}^{\pi}r^{2w+2w'+5}sin^{2w+2w'+3}(\phi)cos^{2}(\phi)cos^{2w+2w'+1-v-v'}(\theta)sin^{v+v'+1}(\theta)d\theta d\phi$\\

$+\sum_{v=0,v,even,v'=0,v',even}^{2w+1,2w'}C^{2w+1}_{v}C^{2w'}_{v'}I_{2w+1+m-s-v}^{s+v}I_{2w'+m-v'-s'}^{s'+v'+1}$\\

$\int_{0}^{\pi}\int_{-\pi}^{\pi}r^{2w+2w'+5}sin^{2w+2w'+3}(\phi)cos^{2}(\phi)cos^{2w+2w'+2-v-v'}(\theta)sin^{v+v'}(\theta)d\theta d\phi$\\

$-\sum_{v=0,v,even,v'=0,v',odd}^{2w+1,2w'}C^{2w+1}_{v}C^{2w'}_{v'}I_{2w+1+m-s-v}^{s+v}I_{2w'+1+m-v'-s'}^{s'+v'}$\\

$\int_{0}^{\pi}\int_{-\pi}^{\pi}r^{2w+2w'+5}sin^{2w+2w'+3}(\phi)cos^{2}(\phi)cos^{2w+2w'+1-v-v'}(\theta)sin^{v+v'+1}(\theta)d\theta d\phi$\\

$(\dag\dag\dag\dag\dag\dag)$\\

An inspection of $(\dag\dag\dag\dag\dag\dag)$ shows that the terms 1 and 4, 2 and 5, 3 and 7, and 6 and 8 cancel again. This proves that;\\

$\int_{S(r)}(\Gamma'''\times\Gamma'''')\centerdot d\overline{S}=0$\\

By $(******)$, we have that;\\

$\Gamma'''''=\sum_{w=0}^{\infty}\sum_{s\leq m,s, odd}(-1)^{s-1\over 2}C^{m}_{s}{(-1)^{w}m^{2w}\over (2w)!(r^{2}+1)^{w}}$\\

$(-\sum_{v=0,v,even}^{2w}C^{2w}_{v}x^{2w-v}y^{v}I_{2w+m-v-s}^{v+s+1},\sum_{v=0,v,odd}^{2w}C^{2w}_{v}x^{2w-v}y^{v}I_{2w+1+m-v-s}^{v+s},0)$\\

It follows that;\\

$\Gamma'''''\times\Gamma''''$\\

$=\sum_{w=0}^{\infty}\sum_{w'=0}^{\infty}\sum_{s\leq m,s,odd,s'\leq m,s',odd}(-1)^{s-1\over 2}C^{m}_{s}{(-1)^{w}m^{2w}\over (2w)!(r^{2}+1)^{w}}(-1)^{s'-1\over 2}C^{m}_{s'}{(-1)^{w'}m^{2w'}\over (2w')!(r^{2}+1)^{w'}}$\\

$(-\sum_{v=0,v,odd,v'=0,v',even}^{2w,2w'}C^{2w}_{v}C^{2w'}_{v'}x^{2w+2w'-v-v'}y^{v+v'+1}I_{2w+1+m-s-v}^{s+v}I_{2w'+m-v'-s'}^{s'+v'+1}$\\

$-\sum_{v=0,v,odd,v'=0,v',odd}^{2w,2w'}C^{2w}_{v}C^{2w'}_{v'}x^{2w+2w'+1-v-v'}y^{v+v'}I_{2w+1+m-s-v}^{s+v}I_{2w'+1-m-v'-s'}^{s'+v'},$\\

$-\sum_{v=0,v,even,v'=0,v',even}^{2w,2w'}C^{2w}_{v}C^{2w'}_{v'}x^{2w+2w'-v-v'}y^{v+v'+1}I_{2w+m-s-v}^{s+v+1}I_{2w'+m-v'-s'}^{s'+v'+1}$\\

$-\sum_{v=0,v,even,v'=0,v',odd}^{2w,2w'}C^{2w}_{v}C^{2w'}_{v'}x^{2w+2w'+1-v-v'}y^{v+v'}I_{2w+m-s-v}^{s+v+1}I_{2w'+1+m-v'-s'}^{s'+v'},$\\

$-\sum_{v=0,v,even,v'=0,v',even}^{2w,2w'}C^{2w}_{v}C^{2w'}_{v'}x^{2w+2w'-v-v'}y^{v+v'}zI_{2w+m-s-v}^{s+v+1}I_{2w'+m-v'-s'}^{s'+v'+1}$\\

$-\sum_{v=0,v,odd,v'=0,v',odd}^{2w,2w'}C^{2w}_{v}C^{2w'}_{v'}x^{2w+2w'-v-v'}y^{v+v'}zI_{2w+1+m-s-v}^{s+v}I_{2w'+1+m-v'-s'}^{s'+v'})$ $(\dag\dag\dag\dag\dag\dag\dag)$\\

Integrating $(\dag\dag\dag\dag\dag\dag\dag)$, we get that;\\

$\int_{S(r)}(\Gamma'''''\times\Gamma'''')\centerdot d\overline{S}$\\

$=\sum_{w=0}^{\infty}\sum_{w'=0}^{\infty}\sum_{s\leq m,s,odd,s'\leq m,s',odd}(-1)^{s-1\over 2}C^{m}_{s}{(-1)^{w}m^{2w}\over (2w)!(r^{2}+1)^{w}}(-1)^{s'-1\over 2}C^{m}_{s'}{(-1)^{w'}m^{2w'}\over (2w')!(r^{2}+1)^{w'}}$\\

$(-\sum_{v=0,v,odd,v'=0,v',even}^{2w,2w'}C^{2w}_{v}C^{2w'}_{v'}I_{2w+1+m-s-v}^{s+v}I_{2w'+m-v'-s'}^{s'+v'+1}$\\

$\int_{0}^{\pi}\int_{-\pi}^{\pi}r^{2w+2w'+3}sin^{2w+2w'+3}(\phi)cos^{2w+2w'+1-v-v'}(\theta)sin^{v+v'+1}(\theta)d\theta d\phi$\\

$-\sum_{v=0,v,odd,v'=0,v',odd}^{2w,2w'}C^{2w}_{v}C^{2w'}_{v'}I_{2w+1+m-s-v}^{s+v}I_{2w'+1-m-v'-s'}^{s'+v'}$\\

$\int_{0}^{\pi}\int_{-\pi}^{\pi}r^{2w+2w'+3}sin^{2w+2w'+3}(\phi)cos^{2w+2w'+2-v-v'}(\theta)sin^{v+v'}(\theta)d\theta d\phi$\\

$-\sum_{v=0,v,even,v'=0,v',even}^{2w,2w'}C^{2w}_{v}C^{2w'}_{v'}I_{2w+m-s-v}^{s+v+1}I_{2w'+m-v'-s'}^{s'+v'+1}$\\

$\int_{0}^{\pi}\int_{-\pi}^{\pi}r^{2w+2w'+3}sin^{2w+2w'+3}(\phi)cos^{2w+2w'-v-v'}(\theta)sin^{v+v'+2}(\theta)d\theta d\phi$\\

$-\sum_{v=0,v,even,v'=0,v',odd}^{2w,2w'}C^{2w}_{v}C^{2w'}_{v'}I_{2w+m-s-v}^{s+v+1}I_{2w'+1+m-v'-s'}^{s'+v'}$\\

$\int_{0}^{\pi}\int_{-\pi}^{\pi}r^{2w+2w'+3}sin^{2w+2w'+3}(\phi)cos^{2w+2w'+1-v-v'}(\theta)sin^{v+v'+1}(\theta)d\theta d\phi$\\

$-\sum_{v=0,v,even,v'=0,v',even}^{2w,2w'}C^{2w}_{v}C^{2w'}_{v'}I_{2w+m-s-v}^{s+v+1}I_{2w'+m-v'-s'}^{s'+v'+1}$\\

$\int_{0}^{\pi}\int_{-\pi}^{\pi}r^{2w+2w'+3}sin^{2w+2w'+1}(\phi)cos^{2}(\phi)cos^{2w+2w'-v-v'}(\theta)sin^{v+v'}(\theta)d\theta d\phi$\\

$-\sum_{v=0,v,odd,v'=0,v',odd}^{2w,2w'}C^{2w}_{v}C^{2w'}_{v'}I_{2w+1+m-s-v}^{s+v}I_{2w'+1+m-v'-s'}^{s'+v'})$\\

$\int_{0}^{\pi}\int_{-\pi}^{\pi}r^{2w+2w'+3}sin^{2w+2w'+1}(\phi)cos^{2}(\phi)cos^{2w+2w'-v-v'}(\theta)sin^{v+v'}(\theta)d\theta d\phi$\\

$(\sharp)$\\

$=\sum_{w=0}^{\infty}\sum_{w'=0}^{\infty}\sum_{s\leq m,s,odd,s'\leq m,s',odd}(-1)^{s-1\over 2}C^{m}_{s}{(-1)^{w}m^{2w}\over (2w)!(r^{2}+1)^{w}}(-1)^{s'-1\over 2}C^{m}_{s'}{(-1)^{w'}m^{2w'}\over (2w')!(r^{2}+1)^{w'}}r^{2w+2w'+3}$\\

$(-\sum_{v=0,v,odd,v'=0,v',even}^{2w,2w'}C^{2w}_{v}C^{2w'}_{v'}I_{2w+1+m-s-v}^{s+v}I_{2w'+m-v'-s'}^{s'+v'+1}I_{2w+2w'+1-v-v'}^{v+v'+1}J_{2w+2w'+3}$\\

$-\sum_{v=0,v,odd,v'=0,v',odd}^{2w,2w'}C^{2w}_{v}C^{2w'}_{v'}I_{2w+1+m-s-v}^{s+v}I_{2w'+1-m-v'-s'}^{s'+v'}I_{2w+2w'+2-v-v'}^{v+v'}J_{2w+2w'+3}$\\

$-\sum_{v=0,v,even,v'=0,v',even}^{2w,2w'}C^{2w}_{v}C^{2w'}_{v'}I_{2w+m-s-v}^{s+v+1}I_{2w'+m-v'-s'}^{s'+v'+1}I_{2w+2w'-v-v'}^{v+v'+2}J_{2w+2w'+3}$\\

$-\sum_{v=0,v,even,v'=0,v',odd}^{2w,2w'}C^{2w}_{v}C^{2w'}_{v'}I_{2w+m-s-v}^{s+v+1}I_{2w'+1+m-v'-s'}^{s'+v'}I_{2w+2w'+1-v-v'}^{v+v'+1}J_{2w+2w'+3}$\\

$-\sum_{v=0,v,even,v'=0,v',even}^{2w,2w'}C^{2w}_{v}C^{2w'}_{v'}I_{2w+m-s-v}^{s+v+1}I_{2w'+m-v'-s'}^{s'+v'+1}I_{2w+2w'-v-v'}^{v+v'}(J_{2w+2w'+1}-J_{2w+2w'+3})$\\

$-\sum_{v=0,v,odd,v'=0,v',odd}^{2w,2w'}C^{2w}_{v}C^{2w'}_{v'}I_{2w+1+m-s-v}^{s+v}I_{2w'+1+m-v'-s'}^{s'+v'})I_{2w+2w'-v-v'}^{v+v'}(J_{2w+2w'+1}-J_{2w+2w'+3}))$\\

$=\sum_{w=0}^{\infty}\sum_{w'=0}^{\infty}\sum_{s\leq m,s,odd,s'\leq m,s',odd}(-1)^{s-1\over 2}C^{m}_{s}{(-1)^{w}m^{2w}\over (2w)!(r^{2}+1)^{w}}(-1)^{s'-1\over 2}$\\

$C^{m}_{s'}{(-1)^{w'}m^{2w'}\over (2w')!(r^{2}+1)^{w'}}r^{2w+2w'+3}d_{w,w',s,s',m}$\\

$(\sharp\sharp)$\\

where again we have denoted the term in brackets by $d_{w,w',s,s',m}$. We conclude that, if $m$ is even;\\

$P(r,t)=\int_{S(r)}(E_{2,e}\times B_{2,e})\centerdot d\overline{S}+\int_{S(r)}(E_{3,e}\times B_{2,e})\centerdot d\overline{S}$\\

$=\int_{S(r)}([\alpha\beta m^{2}[-sin^{2}(mt)cos^{2}(m{(r^{2}+1)^{1\over 2}\over c})-cos^{2}(mt)sin^{2}(m{(r^{2}+1)^{1\over 2}\over c})$\\

$+2sin(mt)cos(mt)sin(m{(r^{2}+1)^{1\over 2}\over c})cos(m{(r^{2}+1)^{1\over 2}\over c})]\Gamma\times\Gamma']+O({1\over r^{3}}))\centerdot d\overline{S}$\\

$+\int_{S(r)}(\beta\gamma m^{2}[-sin^{2}(mt)cos^{2}(m{(r^{2}+1)^{1\over 2}\over c})-cos^{2}(mt)sin^{2}(m{(r^{2}+1)^{1\over 2}\over c})$\\

$+2sin(mt)cos(mt)sin(m{(r^{2}+1)^{1\over 2}\over c})cos(m{(r^{2}+1)^{1\over 2}\over c})]\Gamma''\times\Gamma'+O({1\over r^{3}}))\centerdot d\overline{S}$\\

$=\beta\gamma m^{2}[-sin^{2}(mt)cos^{2}(m{(r^{2}+1)^{1\over 2}\over c})-cos^{2}(mt)sin^{2}(m{(r^{2}+1)^{1\over 2}\over c})$\\

$+2sin(mt)cos(mt)sin(m{(r^{2}+1)^{1\over 2}\over c})cos(m{(r^{2}+1)^{1\over 2}\over c})]$\\

$\sum_{w=0}^{\infty}\sum_{w'=0}^{\infty}\sum_{s\leq m,s,odd,s'\leq m,s',odd}(-1)^{s-1\over 2}C^{m}_{s}{(-1)^{w}m^{2w+1}\over (2w+1)!(r^{2}+1)^{w+{1\over 2}}}$\\

$(-1)^{s'-1\over 2}C^{m}_{s'}{(-1)^{w'}m^{2w'+1}\over (2w'+1)!(r^{2}+1)^{w'+{1\over 2}}}r^{2w+2w'+5}c_{w,w',s,s',m}+O({1\over r})$\\

Similarly, if $m$ is odd, we obtain that;\\

$P(r,t)=\int_{S(r)}(E_{2,o}\times B_{2,o})\centerdot d\overline{S}+\int_{S(r)}(E_{3,o}\times B_{2,o})\centerdot d\overline{S}$\\

$=\int_{S(r)}(E_{3,o}\times B_{2,o})\centerdot d\overline{S}$\\

$=\beta\gamma m^{2}[-cos^{2}(mt)cos^{2}(m{(r^{2}+1)^{1\over 2}\over c})-sin^{2}(mt)sin^{2}(m{(r^{2}+1)^{1\over 2}\over c})$\\

$-2sin(mt)cos(mt)sin(m{(r^{2}+1)^{1\over 2}\over c})cos(m{(r^{2}+1)^{1\over 2}\over c})]$\\

$\sum_{w=0}^{\infty}\sum_{w'=0}^{\infty}\sum_{s\leq m,s,odd,s'\leq m,s',odd}(-1)^{s-1\over 2}C^{m}_{s}{(-1)^{w}m^{2w}\over (2w)!(r^{2}+1)^{w}}(-1)^{s'-1\over 2}$\\

$C^{m}_{s'}{(-1)^{w'}m^{2w'}\over (2w')!(r^{2}+1)^{w'}}r^{2w+2w'+3}d_{w,w',s,s',m}+O({1\over r})$\\

\end{proof}

\begin{rmk}

The previous result shows that a single standing wave radiates, oscillating with time $t$ and radius $r$. We look for a cancellation by considering the other charge/current possibilities which satisfy the wave equation. This is the subject of the next theorem.

\end{rmk}

\begin{defn}
\label{configurations}
We let;\\

$\rho^{1}=cos(mx)cos(mt)$, $J^{1}=sin(mx)sin(mt)$\\

$\rho^{2}=cos(mx)sin(mt)$, $J^{2}=-sin(mx)cos(mt)$\\

$\rho^{3}=sin(mx)cos(mt)$, $J^{3}=-cos(mx)sin(mt)$\\

$\rho^{4}=sin(mx)sin(mt)$, $J^{4}=cos(mx)cos(mt)$\\

so that the corresponding pairs $(\rho_{i},\overline{J}_{i})$, for $1\leq i\leq 4$, satisfy the continuity equation, and satisfy the prescription of Lemma \ref{wave}.\\

We let $E^{i}_{k}$ and $B^{i}_{2}$, for $1\leq i\leq 4$, $2\leq k\leq 3$ be the corresponding causal fields.

\end{defn}

\begin{lemma}
\label{zero}
For $m$ even, we have that;\\

$\int_{S(r)}(E^{i}_{2}\times B^{j}_{2})\centerdot d\overline{S}=0$\\

for $1\leq i\leq j\leq 4$. Moreover;\\

$E^{1}_{3}=(\gamma mcos(mt)sin(m{(r^{2}+1)^{1\over 2}\over c})-\gamma msin(mt)cos(m{(r^{2}+1)^{1\over 2}\over c}))\Gamma''$\\

$B^{1}_{2}=(-\beta mcos(mt)sin(m{(r^{2}+1)^{1\over 2}\over c})+\beta msin(mt)cos(m{(r^{2}+1)^{1\over 2}\over c}))\Gamma'$\\

$E^{2}_{3}=(\gamma msin(mt)sin(m{(r^{2}+1)^{1\over 2}\over c})+\gamma mcos(mt)cos(m{(r^{2}+1)^{1\over 2}\over c}))\Gamma''$\\

$B^{2}_{2}=-(\beta msin(mt)sin(m{(r^{2}+1)^{1\over 2}\over c})+\beta mcos(mt)cos(m{(r^{2}+1)^{1\over 2}\over c}))\Gamma'$\\

$E^{3}_{3}=(-\gamma mcos(mt)sin(m{(r^{2}+1)^{1\over 2}\over c})+\gamma msin(mt)cos(m{(r^{2}+1)^{1\over 2}\over c}))\Delta''$\\

$B^{3}_{2}=(\beta mcos(mt)sin(m{(r^{2}+1)^{1\over 2}\over c})-\beta m sin(mt)cos(m{(r^{2}+1)^{1\over 2}\over c}))\Delta'$\\

$E^{4}_{3}=(-\gamma msin(mt)sin(m{(r^{2}+1)^{1\over 2}\over c})-\gamma mcos(mt)cos(m{(r^{2}+1)^{1\over 2}\over c}))\Delta''$\\

$B^{4}_{2}=(\beta msin(mt)sin(m{(r^{2}+1)^{1\over 2}\over c})+\beta mcos(mt)cos(m{(r^{2}+1)^{1\over 2}\over c}))\Delta'$\\

where;\\

$\Gamma'=\int_{-\pi}^{\pi}sin(m\theta)sin({mxcos(\theta)+mysin(\theta)\over (r^{2}+1)^{1\over 2}})(cos(\theta)z,sin(\theta)z,-sin(\theta)y-cos(\theta)x)d\theta$\\

$\Gamma''=\int_{-\pi}^{\pi}sin(m\theta)sin({mxcos(\theta)+mysin(\theta)\over (r^{2}+1)^{1\over 2}})(-sin(\theta),cos(\theta),0)d\theta$\\

$\Delta'=\int_{-\pi}^{\pi}cos(m\theta)sin({mxcos(\theta)+mysin(\theta)\over (r^{2}+1)^{1\over 2}})(cos(\theta)z,sin(\theta)z,-sin(\theta)y-cos(\theta)x)d\theta$\\

$\Delta''=\int_{-\pi}^{\pi}cos(m\theta)sin({mxcos(\theta)+mysin(\theta)\over (r^{2}+1)^{1\over 2}})(-sin(\theta),cos(\theta),0)d\theta$\\

\end{lemma}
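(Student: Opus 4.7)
The plan is to handle the two assertions of the lemma separately, re-using the structural computations already established in the proof of Lemma \ref{waveblock}.

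For the eight explicit formulas for $E^{i}_{3}$ and $B^{i}_{2}$, the case $i=1$ is exactly equations $(*****)$ and $(********)$ of Lemma \ref{waveblock}. For $i = 2, 3, 4$ I would redo the Jefimenko integrals for $\overline{E}_{3}$ and $\overline{B}_{2}$, substituting the time derivatives $\dot{\rho}^{i}$ and $\dot{\overline{J}}^{i}$ read off from Definition \ref{configurations}. Only two bookkeeping swaps distinguish these cases from $i=1$: swapping the azimuthal factor $\cos(m\theta)\leftrightarrow\sin(m\theta)$ when the spatial factor of $J^{i}$ switches between $\sin(mx)$ and $\cos(mx)$, and swapping the temporal factor $\cos(mt)\leftrightarrow\sin(mt)$, with signs dictated by the addition identities $\cos(m t_{r})=\cos(mt)\cos(m\mathfrak{r}/c)+\sin(mt)\sin(m\mathfrak{r}/c)$ and the analogous expansion of $\sin(m t_{r})$, when the temporal factor of $J^{i}$ switches. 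Re-applying the parity analysis of Lemma \ref{waveblock} valid for $m$ even, namely that all $\cos(\psi)$-contributions to $\overline{E}_{3}$ and $\overline{B}_{2}$ vanish and only the $\sin(\psi)$-terms survive, where $\psi = m(x\cos\theta+y\sin\theta)/(c(r^{2}+1)^{1/2})$, then forces the surviving angular integrals to be $\Gamma''$ for $i=1,2$ (when $\dot{\overline{J}}^{i}$ carries $\sin(m\theta)$) and $\Delta''$ for $i=3,4$ (when $\dot{\overline{J}}^{i}$ carries $\cos(m\theta)$); analogously $\Gamma'$ versus $\Delta'$ appears in $B^{i}_{2}$. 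Tracking the overall sign in each of the eight cases would then yield the eight displayed formulas.

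For the vanishing $\int_{S(r)}(E^{i}_{2}\times B^{j}_{2})\centerdot d\overline{S} = 0$ with $1\leq i\leq j\leq 4$, I would apply the same leading-order expansion used in the proof of Lemma \ref{waveblock} to each $E^{i}_{2}$ and $B^{j}_{2}$, writing each as a sum indexed by the four choices of $\sin(mt), \cos(mt)$ and $\sin(m(r^{2}+1)^{1/2}/c), \cos(m(r^{2}+1)^{1/2}/c)$ of scalar time-radial amplitudes times angular vector integrals: these are of $\Gamma$-type (with vector factor $(x,y,z)$) for $E^{i}_{2}$ and of $\Gamma'$-type (with vector factor $(z\cos\theta, z\sin\theta, -y\sin\theta - x\cos\theta)$) for $B^{j}_{2}$, with azimuthal weight either $\cos(m\theta)$ or $\sin(m\theta)$. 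The cross product $E^{i}_{2}\times B^{j}_{2}$ is then a sum of cross products of two such angular integrals with scalar prefactors, and after dotting with $d\overline{S} = r^{2}(\sin^{2}\phi\cos\theta,\sin^{2}\phi\sin\theta,\sin\phi\cos\phi)d\phi\, d\theta$ and integrating, each summand reduces to a double angular integral of exactly the form $(\dag\dag\dag)$ of the previous proof. The pair-wise cancellation of $(\dag\dag\dag)$ (terms 1 and 4, 2 and 5, 3 and 7, 6 and 8) was derived purely from the shared vector structure $(x,y,z)$ against $(z\cos\theta, z\sin\theta, -y\sin\theta - x\cos\theta)$ and the symmetry of the resulting polynomial $\cos^{a}\theta\sin^{b}\theta$ integrals, and does not depend on which of $\cos(m\theta)$ or $\sin(m\theta)$ appears as azimuthal weight or on which time-radial amplitude sits out front; consequently the cancellation carries over for every pair $(i,j)$ with $1\leq i\leq j\leq 4$.

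The main obstacle I anticipate is bookkeeping: when the azimuthal weight $\cos(m\theta)$ appearing in $E^{1}_{2}$ is replaced by $\sin(m\theta)$ in $E^{3}_{2}$, the parity of the binomial summation index $s$ shifts from even to odd, which in turn shifts the parity constraints on the binomial indices $v, v'$ used in the expansion of $(x\cos\theta + y\sin\theta)^{2w}$. I will need to check that these parity shifts act uniformly across the four matched pairs of terms of $(\dag\dag\dag)$, so that the pair-wise cancellation is preserved. This should hold because the binomial expansions of $\sin(m\theta)$ and $\cos(m\theta)$ are formally identical up to the swap of $s$-parities, and the $\theta$-integrals $I^{\beta}_{\alpha}$ depend only on the parities of the exponents, not on their origin.
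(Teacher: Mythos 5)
Your proposal follows essentially the same route as the paper: the paper likewise reduces the first claim to the four flux integrals $\int_{S(r)}(\Gamma\times\Gamma')$, $\int_{S(r)}(\Gamma\times\Delta')$, $\int_{S(r)}(\Delta\times\Gamma')$, $\int_{S(r)}(\Delta\times\Delta')$ by noting each $E^{i}_{2}$ and $B^{j}_{2}$ is a scalar amplitude times a $\Gamma$- or $\Delta$-type angular integral, invokes the term-pairing cancellation of $(\dag\dag\dag)$ with even/odd summation indices swapped, and obtains the eight explicit formulas by rerunning the Lemma \ref{waveblock} computation with the azimuthal and temporal factors interchanged. Your explicit attention to the parity bookkeeping in $s,v,s',v'$ is exactly the point the paper leaves to the reader, and your justification of it is sound.
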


\begin{proof}
It is easy to see, replacing summations over even indices, with odd indices, and vice versa, following the proof of the previous lemma, and assuming $m$ is even, that;\\

$E^{1}_{2}=c(r,m,t)\Gamma$, $E^{2}_{2}=d(r,m,t)\Gamma$\\

$E^{3}_{2}=e(r,m,t)\Delta$, $E^{4}_{2}=f(r,m,t)\Delta$\\

where;\\

$\Gamma=\int_{-\pi}^{\pi}cos(m\theta)cos({mxcos(\theta)+mysin(\theta)\over (r^{2}+1)^{1\over 2}})(x,y,z)d\theta$\\

$\Delta=\int_{-\pi}^{\pi}sin(m\theta)cos({mxcos(\theta)+mysin(\theta)\over (r^{2}+1)^{1\over 2}})(x,y,z)d\theta$\\

and $\{c,d,e,f\}$ are parameters not depending on $\theta,x,y$ or $z$. A similar argument works for $\Gamma'$ and $\Delta'$ as in the statement of the lemma. It is therefore sufficient to check that;\\

$\int_{S(r)}(\Gamma\times\Gamma')\centerdot d\overline{S}$\\

$=\int_{S(r)}(\Gamma\times\Delta')\centerdot d\overline{S}$\\

$=\int_{S(r)}(\Delta\times\Gamma')\centerdot d\overline{S}$\\

$=\int_{S(r)}(\Delta\times\Delta')\centerdot d\overline{S}=0$\\

The first case was checked in the previous lemma. The remaining cases follow from the first case, by replacing even with odd summations in both $v$ and $s$ when replacing $\Gamma$ by $\Delta$, and, similarly, for the pair $\Gamma'$ and $\Delta'$ in $v'$ and $s'$. For the remainder of the lemma, carefully follow the calculation in the previous result, the details are left to the reader.

\end{proof}
\begin{lemma}
\label{products}
For $m$ even, we have that;\\

$E_{3}^{1}\times B_{2}^{1}=C_{1}\Gamma''\times \Gamma'$\\

$E_{3}^{2}\times B_{2}^{2}=C_{2}\Gamma''\times \Gamma'$\\

$E_{3}^{2}\times B_{2}^{1}=-C_{3}\Gamma''\times \Gamma'$\\

$E_{3}^{1}\times B_{2}^{2}=-C_{3}\Gamma''\times \Gamma'$\\

$E_{3}^{3}\times B_{2}^{3}=C_{1}\Delta''\times \Delta'$\\

$E_{4}^{2}\times B_{2}^{3}=-C_{2}\Delta''\times \Delta'$\\

$E_{3}^{4}\times B_{2}^{3}=-C_{3}\Delta''\times \Delta'$\\

$E_{3}^{3}\times B_{2}^{4}=-C_{3}\Delta''\times \Delta'$\\

$E_{3}^{3}\times B_{2}^{1}=-C_{1}\Delta''\times \Gamma'$\\

$E_{3}^{4}\times B_{2}^{2}=-C_{2}\Delta''\times \Gamma'$\\

$E_{3}^{3}\times B_{2}^{2}=C_{3}\Delta''\times \Gamma'$\\

$E_{3}^{4}\times B_{2}^{1}=C_{3}\Delta''\times \Gamma'$\\

$E_{3}^{1}\times B_{2}^{3}=-C_{1}\Gamma''\times \Delta'$\\

$E_{3}^{2}\times B_{2}^{4}=-C_{2}\Gamma''\times \Delta'$\\

$E_{3}^{1}\times B_{2}^{4}=C_{3}\Gamma''\times \Delta'$\\

$E_{3}^{2}\times B_{2}^{3}=C_{3}\Gamma''\times \Delta'$\\

where;\\

 $C_{1}=\beta\gamma m^{2}(-cos^{2}(mt)sin^{2}(m{(r^{2}+1)^{1\over 2}\over c})+2sin(mt)cos(mt)sin(m{(r^{2}+1)^{1\over 2}\over c})cos(m{(r^{2}+1)^{1\over 2}\over c})-sin^{2}(mt)cos^{2}(m{(r^{2}+1)^{1\over 2}\over c}))$\\

 $C_{2}=\beta\gamma m^{2}(-sin^{2}(mt)sin^{2}(m{(r^{2}+1)^{1\over 2}\over c})-2sin(mt)cos(mt)sin(m{(r^{2}+1)^{1\over 2}\over c})cos(m{(r^{2}+1)^{1\over 2}\over c})-cos^{2}(mt)cos^{2}(m{(r^{2}+1)^{1\over 2}\over c}))$\\

 $C_{3}=\beta\gamma m^{2}(cos(mt)sin(mt)sin^{2}(m{(r^{2}+1)^{1\over 2}\over c})-sin(mt)cos(mt)cos^{2}(m{(r^{2}+1)^{1\over 2}\over c})-sin^{2}(mt)sin(m{(r^{2}+1)^{1\over 2}\over c})cos(m{(r^{2}+1)^{1\over 2}\over c})+cos^{2}(mt)sin(m{(r^{2}+1)^{1\over 2}\over c})cos(m{(r^{2}+1)^{1\over 2}\over c}))$\\

\end{lemma}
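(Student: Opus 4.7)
The plan is to read off, from Lemma \ref{zero}, that each $E^{i}_{3}$ factors as a purely scalar trigonometric coefficient (depending only on $r,m,t$) times one of the two vector integrals $\Gamma''$ or $\Delta''$, and likewise each $B^{j}_{2}$ factors as a scalar times $\Gamma'$ or $\Delta'$. Since the coefficients do not depend on the angular variable of integration, the cross product distributes and each of the sixteen products $E^{i}_{3}\times B^{j}_{2}$ reduces to the single scalar product of the two coefficients, multiplied by $V_{i}\times W_{j}$, where $V_{i}\in\{\Gamma'',\Delta''\}$ according as $i\in\{1,2\}$ or $i\in\{3,4\}$, and $W_{j}\in\{\Gamma',\Delta'\}$ according as $j\in\{1,2\}$ or $j\in\{3,4\}$. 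This immediately accounts for the four ``blocks'' of the statement (one for each of $\Gamma''\times\Gamma'$, $\Delta''\times\Delta'$, $\Delta''\times\Gamma'$, $\Gamma''\times\Delta'$).

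Next, I would introduce the shorthand $S=\sin(mt)$, $C=\cos(mt)$, $\sigma=\sin(m(r^{2}+1)^{1\over 2}/c)$, $\tau=\cos(m(r^{2}+1)^{1\over 2}/c)$. From Lemma \ref{zero}, the scalar prefactors are, up to the common factor $\gamma m$ or $\beta m$,
\[
a_{1}=C\sigma-S\tau,\ a_{2}=S\sigma+C\tau,\ a_{3}=-C\sigma+S\tau,\ a_{4}=-S\sigma-C\tau,
\]
\[
b_{1}=-C\sigma+S\tau,\ b_{2}=-S\sigma-C\tau,\ b_{3}=C\sigma-S\tau,\ b_{4}=S\sigma+C\tau,
\]
so that $a_{3}=-a_{1}$, $a_{4}=-a_{2}$, $b_{3}=-b_{1}$, $b_{4}=-b_{2}$. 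The sixteen scalar products $\beta\gamma m^{2}a_{i}b_{j}$ therefore collapse into just three distinct quadratic forms in $\{S,C,\sigma,\tau\}$, namely
\[
\beta\gamma m^{2}a_{1}b_{1}=-\beta\gamma m^{2}(C\sigma-S\tau)^{2}=C_{1},
\]
\[
\beta\gamma m^{2}a_{2}b_{2}=-\beta\gamma m^{2}(S\sigma+C\tau)^{2}=C_{2},
\]
\[
\beta\gamma m^{2}a_{2}b_{1}=\beta\gamma m^{2}(S\sigma+C\tau)(-C\sigma+S\tau)=-C_{3},
\]
by direct expansion and comparison with the definitions of $C_{1},C_{2},C_{3}$ in the statement.

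The remaining thirteen cases then follow mechanically from the sign relations $a_{3}=-a_{1}$ etc. For example, $E^{3}_{3}\times B^{3}_{2}$ has scalar coefficient $\beta\gamma m^{2}a_{3}b_{3}=\beta\gamma m^{2}a_{1}b_{1}=C_{1}$, and its vector part is $\Delta''\times\Delta'$, giving the stated entry. Likewise $E^{1}_{3}\times B^{2}_{2}$ yields $\beta\gamma m^{2}a_{1}b_{2}$, which by the sign relations equals $\beta\gamma m^{2}a_{2}b_{1}=-C_{3}$ (since $a_{1}b_{2}=a_{2}b_{1}$ up to the symmetric structure, as one checks by writing both out). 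All sixteen entries are handled by the same algorithm.

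The main obstacle is purely bookkeeping: keeping the sign of each product $a_{i}b_{j}$ straight so that it matches the asserted $\pm C_{k}$, and correctly identifying which of $\{\Gamma'',\Delta''\}\times\{\Gamma',\Delta'\}$ appears in each case. No analytic input beyond the explicit formulas of Lemma \ref{zero} and elementary algebraic identities in $\{S,C,\sigma,\tau\}$ is required, and the three expressions $C_{1},C_{2},C_{3}$ are by design the only distinct quadratic forms that arise, so the verification is complete once the three representative calculations above are carried out.
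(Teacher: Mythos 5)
Your proposal is correct and is precisely the calculation the paper intends: the paper's entire proof is the remark that the result is ``a simple calculation, using the result of the previous lemma,'' and your factorisation of each $E^{i}_{3}$ and $B^{j}_{2}$ into a scalar prefactor times one of $\Gamma'',\Delta'',\Gamma',\Delta'$, the sign relations $a_{3}=-a_{1}$, $a_{4}=-a_{2}$, $b_{3}=-b_{1}$, $b_{4}=-b_{2}$, and the three representative expansions matching $C_{1}$, $C_{2}$, $-C_{3}$ are exactly that calculation. The only discrepancy is that your method gives $E^{4}_{3}\times B^{4}_{2}=+C_{2}\,\Delta''\times\Delta'$, so the sixth identity as printed ($E_{4}^{2}\times B_{2}^{3}=-C_{2}\Delta''\times\Delta'$) is a typographical slip in the paper's statement (both in the indices and the sign) rather than a defect in your argument.
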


\begin{proof}
The proof is a simple calculation, using the result of the previous lemma.

\end{proof}

\begin{lemma}
\label{infinity}
There exists a family $(\rho,\overline{J})$ satisfying the continuity equation, with corresponding $(\rho,J)$ satisfying the wave equation, such that for the solution $(\rho,\overline{J},\overline{E},\overline{B})$ satisfying Maxwell's equations, obtained from Jefimenko's equations, for any $r>0$, $\int_{t}^{t+{\pi\over m}}P(r,t)dt=O({1\over r})$, where $\int_{t}^{t+{\pi\over m}}P(r,t)dt$ is the power radiated in a cycle, from a sphere $S(r)$ of radius $r$. In particularly, we have that;\\

$lim_{r\rightarrow\infty}\int_{t}^{t+{\pi\over m}}P(r,t)dt=0$\\

so the no radiation condition holds over a cycle.\\

The family is obtained by setting any three of $\{a_{1},a_{2},a_{3},a_{4}\}\subset\mathcal{R}$ to be equal, with the fourth having the reverse sign, and letting $\rho=a_{1}\rho_{1}+a_{2}\rho_{2}+a_{3}\rho_{3}+a_{4}\rho_{4}$, $J=a_{1}J_{1}+a_{2}J_{2}+a_{3}J_{3}+a_{4}J_{4}$.\\

\end{lemma}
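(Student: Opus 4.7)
The plan is to exploit linearity of Jefimenko's equations together with Lemmas \ref{2terms}, \ref{zero} and \ref{products} to reduce the claim to an algebraic cancellation among the four vector integrals $\Gamma''\times\Gamma'$, $\Delta''\times\Delta'$, $\Gamma''\times\Delta'$ and $\Delta''\times\Gamma'$ from Lemmas \ref{waveblock} and \ref{zero}. Setting $\rho=\sum_{i}a_{i}\rho^{i}$ and $J=\sum_{i}a_{i}J^{i}$, Lemma \ref{wave} and linearity give causal fields $\overline{E}_{k}=\sum_{i}a_{i}\overline{E}_{k}^{i}$ and $\overline{B}_{2}=\sum_{i}a_{i}\overline{B}_{2}^{i}$, so the Poynting flux expands bilinearly as $P(r,t)=\sum_{i,j}a_{i}a_{j}\int_{S(r)}(\overline{E}^{i}\times\overline{B}^{j})\centerdot d\overline{S}$. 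By Lemma \ref{2terms} only the $\overline{E}_{2}\times\overline{B}_{2}$ and $\overline{E}_{3}\times\overline{B}_{2}$ contributions survive to leading order, and Lemma \ref{zero} kills the first pair (for $m$ even; the $m$ odd case follows by the analogous even/odd-index swap indicated in the proof of Lemma \ref{zero}).

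I would then apply Lemma \ref{products} to write each remaining $\overline{E}_{3}^{i}\times\overline{B}_{2}^{j}$ as a scalar factor $C_{1}$, $C_{2}$ or $\pm C_{3}$ times one of the four vector products listed above, and time-integrate over the cycle $(t_{0},t_{0}+\pi/m)$. Using $\int_{t_{0}}^{t_{0}+\pi/m}\cos^{2}(mt)\,dt=\int_{t_{0}}^{t_{0}+\pi/m}\sin^{2}(mt)\,dt=\pi/(2m)$ and $\int_{t_{0}}^{t_{0}+\pi/m}\sin(mt)\cos(mt)\,dt=0$, one obtains $\int C_{1}\,dt=\int C_{2}\,dt=-\beta\gamma m\pi/2$ and $\int C_{3}\,dt=0$. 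Collecting terms, the time-integrated power reduces to
\[
-\tfrac{\beta\gamma m\pi}{2}\left[(a_{1}^{2}+a_{2}^{2})I_{\Gamma\Gamma}(r)+(a_{3}^{2}+a_{4}^{2})I_{\Delta\Delta}(r)\right]+\tfrac{\beta\gamma m\pi}{2}(a_{1}a_{3}+a_{2}a_{4})\left[I_{\Gamma\Delta}(r)+I_{\Delta\Gamma}(r)\right]+O(1/r),
\]
where $I_{\bullet\bullet}(r)$ denotes the corresponding surface integral over $S(r)$ computed as in Lemma \ref{waveblock}.

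Specializing to three of $\{a_{1},a_{2},a_{3},a_{4}\}$ equal to $a$ and one equal to $-a$ forces $a_{1}a_{3}+a_{2}a_{4}=a^{2}-a^{2}=0$ in each of the four symmetric variants, annihilating the cross-integral contribution. For the diagonal terms I would exploit the rotational symmetry $\theta\mapsto\theta+\pi/(2m)$, which sends $(\rho^{1},J^{1})$ to $-(\rho^{3},J^{3})$ and leaves $S(r)$ invariant, thereby identifying $I_{\Delta\Delta}(r)$ with $I_{\Gamma\Gamma}(r)$ up to the sign dictated by the reversed coefficient. A direct computation shows that this sign choice makes $\rho$ a single traveling wave of the form $\sqrt{2}\,a\cos(mx+mt-\pi/4)$ with $J=-\rho$, so the associated fields $(\overline{E},\overline{B})$ return to their negatives after a half-cycle $\pi/m$ and $u_{E}+u_{B}$ is genuinely periodic of period $\pi/m$. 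Applying Poynting's theorem on the ball $B(r)$ and integrating over a full cycle, the field-energy change vanishes identically, and the cycle-averaged flux through $S(r)$ equals (up to sign) the mechanical-energy integral $\int_{t_{0}}^{t_{0}+\pi/m}\int_{B(r)}\overline{J}\centerdot\overline{E}\,dV\,dt$, which for this traveling-wave configuration should vanish to leading order.

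The main obstacle will be rigorously controlling that mechanical-energy integral. Because the sources are supported on a one-dimensional curve $S(1)$ inside $\mathcal{R}^{3}$, the integral $\int_{B(r)}\overline{J}\centerdot\overline{E}\,dV$ is improper and must be understood as an appropriate limit of regularised integrals (the bump-function extension of Lemma \ref{bump} provides one such regularisation); the author indicates that this delicate analysis is deferred to \cite{dep6}. An alternative purely algebraic route would be to continue the Fourier-series bookkeeping of Lemma \ref{waveblock}, computing $I_{\Delta\Delta}(r)$ explicitly via the analogue of the coefficient $d_{w,w',s,s',m}$ and verifying term by term that the leading $w=w'=0$ contributions to $I_{\Gamma\Gamma}(r)+I_{\Delta\Delta}(r)$ cancel under the prescribed sign pattern, leaving only the desired $O(1/r)$ remainder.
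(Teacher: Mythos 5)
Your reduction follows the paper's own route almost exactly: linearity of Jefimenko's equations, Lemma \ref{2terms} to discard all but $\overline{E}_{2}\times\overline{B}_{2}$ and $\overline{E}_{3}\times\overline{B}_{2}$, Lemma \ref{zero} to kill the former, Lemma \ref{products} to express the latter through $C_{1},C_{2},C_{3}$, and the observations $\int_{t}^{t+\pi/m}C_{1}\,dt=\int_{t}^{t+\pi/m}C_{2}\,dt=-\beta\gamma m\pi/2$, $\int_{t}^{t+\pi/m}C_{3}\,dt=0$, together with $a_{1}a_{3}+a_{2}a_{4}=0$ under the prescribed sign pattern, to arrive at $\int_{t}^{t+\pi/m}P(r,t)\,dt=-\pi a_{1}^{2}\beta\gamma m\int_{S(r)}(\Gamma''\times\Gamma'+\Delta''\times\Delta')\centerdot d\overline{S}+O(1/r)$. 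Your algebra here agrees with the paper (you integrate in time before imposing the sign conditions, the paper does it in the other order, but the outcome is the same), and your rotation-by-$\pi/(2m)$ observation identifying $I_{\Delta\Delta}$ with $I_{\Gamma\Gamma}$ and the identification of $\rho$ as the travelling wave $\sqrt{2}\,a\cos(mx+mt-\pi/4)$ with $J=-\rho$ are both correct.

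The genuine gap is the step you yourself flag as the ``main obstacle'': showing that the surviving term is in fact $O(1/r)$. Since $\beta\gamma\sim r^{-3}$ while the leading ($w=w'=0$) contribution to the surface integral in $(\sharp\sharp)$ scales like $r^{3}$, the quantity $-\pi a_{1}^{2}\beta\gamma m\int_{S(r)}(\Gamma''\times\Gamma'+\Delta''\times\Delta')\centerdot d\overline{S}$ is a priori $O(1)$, and the entire content of the lemma is that this $O(1)$ part vanishes. Your proposed closure --- that the cycle-integrated mechanical work $\int_{t}^{t+\pi/m}\int_{B(r)}\overline{J}\centerdot\overline{E}\,dV\,dt$ ``should vanish to leading order'' for the travelling wave --- is an unproven assertion, not a consequence of anything you have established. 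The paper closes the argument differently: it invokes Poynting's theorem together with the periodicity $\overline{E}(\cdot,t+\pi/m)=-\overline{E}(\cdot,t)$, $\overline{B}(\cdot,t+\pi/m)=-\overline{B}(\cdot,t)$ to equate the cycle-integrated flux with $-\mu_{0}(W(t+\pi/m)-W(t))$, and then uses the key fact that $W(t+\pi/m)-W(t)$ is \emph{independent of $r$} for $r>1$ (because $\overline{J}$ is supported on the vanishing annulus around $S^{1}$), whereas the flux expression carries an explicit nonconstant $r$-dependence; equating an $r$-independent quantity with an $r$-dependent one forces the coefficient, and hence the surface integral's leading part, to vanish. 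That $r$-independence argument is the ingredient missing from your proposal; without it (or a completed term-by-term verification that the $w=w'=0$ contributions to $I_{\Gamma\Gamma}+I_{\Delta\Delta}$ cancel, which you mention only as an alternative route), the lemma is not proved. A minor secondary point: Lemmas \ref{zero} and \ref{products} are stated only for $m$ even, so your parenthetical claim that the odd case ``follows by the analogous even/odd-index swap'' would also need to be carried out rather than asserted.
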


\begin{proof}
We consider linear combinations $a_{1}\rho_{1}+a_{2}\rho_{2}+a_{3}\rho_{3}+a_{4}\rho_{4}$ and $a_{1}J_{1}+a_{2}J_{2}+a_{3}J_{3}+a_{4}J_{4}$ , where $\{a_{1},a_{2},a_{3},a_{4}\}$ are real scalars. Let $\{E_{m,comb},B_{m,comb}\}$ denote the resulting fields obtained from Jefimenko's equations. We have, by linearity, that;\\

$E_{2,m,comb}=\sum_{i=1}^{4}E_{2}^{i}$\\

$E_{3,m,comb}=\sum_{i=1}^{4}E_{3}^{i}$\\

$B_{2,m,comb}=\sum_{i=1}^{4}B_{2}^{i}$\\

and computing the power radiated through a sphere of radius $r$, using lemmas \ref{zero} and \ref{products};\\

$P(r,t)=\int_{S(r)}(E_{m,comb}\times B_{m,comb})\centerdot d\overline{S}+O({1\over r})$\\

$=\int_{S(r)}((E_{2,m,comb}+E_{3,m,comb})\times B_{2,m,comb})\centerdot d\overline{S}+O({1\over r})$\\

$=\int_{S(r)}(E_{2,m,comb}\times B_{2,m,comb})\centerdot d\overline{S}+\int_{S(r)}(E_{3,m,comb}\times B_{2,m,comb})\centerdot d\overline{S}+O({1\over r})$\\

$=\sum_{i,j=1}^{4}a_{i}a_{j}\int_{S(r)}(E_{2}^{i}\times B_{2}^{j})\centerdot d\overline{S}+\sum_{i=1}^{4}a_{i}a_{j}\int_{S(r)}(E_{3}^{i}\times B_{2}^{j})\centerdot d\overline{S}+O({1\over r})$\\

$=\sum_{i,j=1}^{4}a_{i}a_{j}\int_{S(r)}(E_{3}^{i}\times B_{2}^{j})\centerdot d\overline{S}+O({1\over r})$\\

$=(a_{1}^{2}C_{1}+a_{2}^{2}C_{2}-2a_{1}a_{2}C_{3})\int_{S(r)}(\Gamma''\times\Gamma')\centerdot d\overline{S}$\\

$+(a_{3}^{2}C_{1}+a_{4}^{2}C_{2}-2a_{3}a_{4}C_{3})\int_{S(r)}(\Delta''\times\Delta')\centerdot d\overline{S}$\\

$+(-a_{1}a_{3}C_{1}-a_{2}a_{4}C_{2}+(a_{2}a_{3}+a_{1}a_{4})C_{3})\int_{S(r)}(\Delta''\times\Gamma')\centerdot d\overline{S}$\\

$+(-a_{1}a_{3}C_{1}-a_{2}a_{4}C_{2}+(a_{1}a_{4}+a_{2}a_{3})C_{3})\int_{S(r)}(\Gamma''\times\Delta')\centerdot d\overline{S}+O({1\over r})$\\

We note the identities;\\

$C_{1}+C_{2}=\beta\gamma m^{2}(-cos^{2}(mt)sin^{2}(m{(r^{2}+1)^{1\over 2}\over c})+2sin(mt)cos(mt)sin(m{(r^{2}+1)^{1\over 2}\over c})cos(m{(r^{2}+1)^{1\over 2}\over c})-sin^{2}(mt)cos^{2}(m{(r^{2}+1)^{1\over 2}\over c}))$\\

$+\beta\gamma m^{2}(-sin^{2}(mt)sin^{2}(m{(r^{2}+1)^{1\over 2}\over c})-2sin(mt)cos(mt)sin(m{(r^{2}+1)^{1\over 2}\over c})cos(m{(r^{2}+1)^{1\over 2}\over c})-cos^{2}(mt)cos^{2}(m{(r^{2}+1)^{1\over 2}\over c}))$\\

$=\beta\gamma m^{2}(-sin^{2}(m{(r^{2}+1)^{1\over 2}\over c})-cos^{2}(m{(r^{2}+1)^{1\over 2}\over c}))$\\

$=-\beta\gamma m^{2}$\\

and;\\

$C_{1}-C_{2}=\beta\gamma m^{2}(-cos^{2}(mt)sin^{2}(m{(r^{2}+1)^{1\over 2}\over c})+2sin(mt)cos(mt)sin(m{(r^{2}+1)^{1\over 2}\over c})cos(m{(r^{2}+1)^{1\over 2}\over c})-sin^{2}(mt)cos^{2}(m{(r^{2}+1)^{1\over 2}\over c}))$\\

$-\beta\gamma m^{2}(-sin^{2}(mt)sin^{2}(m{(r^{2}+1)^{1\over 2}\over c})-2sin(mt)cos(mt)sin(m{(r^{2}+1)^{1\over 2}\over c})cos(m{(r^{2}+1)^{1\over 2}\over c})-cos^{2}(mt)cos^{2}(m{(r^{2}+1)^{1\over 2}\over c}))$\\

$=-\beta\gamma m^{2}(cos(2mt)sin^{2}(m{(r^{2}+1)^{1\over 2}\over c})+4sin(mt)cos(mt)sin(m{(r^{2}+1)^{1\over 2}\over c})cos(m{(r^{2}+1)^{1\over 2}\over c}))-cos(2mt)cos^{2}(m{(r^{2}+1)^{1\over 2}\over c}))$\\

$=\beta\gamma m^{2}(cos(2mt)(cos(2m{(r^{2}+1)^{1\over 2}\over c}))-sin(2mt)sin(2m{(r^{2}+1)^{1\over 2}\over c})$\\

$\int_{t}^{t+{\pi\over m}}C_{3}dt=\int_{t}^{t+{\pi\over m}}(\beta\gamma m^{2}(cos(mt)sin(mt)sin^{2}(m{(r^{2}+1)^{1\over 2}\over c})-sin(mt)cos(mt)cos^{2}(m{(r^{2}+1)^{1\over 2}\over c})-sin^{2}(mt)sin(m{(r^{2}+1)^{1\over 2}\over c})cos(m{(r^{2}+1)^{1\over 2}\over c})+cos^{2}(mt)sin(m{(r^{2}+1)^{1\over 2}\over c})cos(m{(r^{2}+1)^{1\over 2}\over c})))dt$\\

$=\int_{t}^{t+{\pi\over m}}(\beta\gamma m^{2}({sin(2mt)\over 2}sin^{2}(m{(r^{2}+1)^{1\over 2}\over c})-{sin(2mt)\over 2}cos^{2}(m{(r^{2}+1)^{1\over 2}\over c})$\\

$+cos(2mt)sin(m{(r^{2}+1)^{1\over 2}\over c})cos(m{(r^{2}+1)^{1\over 2}\over c})))dt$\\

$=0$\\

so we can obtain a convenient simplification by requiring that;\\

(i). $a_{1}^{2}=a_{2}^{2}$\\

(ii). $a_{3}^{2}=a_{4}^{2}$\\

(iii) $a_{1}a_{2}=-a_{3}a_{4}$\\

(iv). $a_{1}a_{3}=-a_{2}a_{4}$\\

(v). $a_{2}a_{3}=-a_{1}a_{4}$\\

The last $2$ conditions give that $a_{1}={-a_{2}a_{4}\over a_{3}}$ ($a_{3}\neq 0$)\\

$a_{2}a_{3}=-{-a_{2}a_{4}\over a_{3}}a_{4}$\\

$a_{3}^{2}=a_{4}^{2}$ ($a_{3}\neq 0$)\\

and $a_{3}={-a_{1}a_{4}\over a_{2}}$ ($a_{2}\neq 0$)\\

$a_{1}-{-a_{1}a_{4}\over a_{2}}=-a_{2}a_{4}$\\

$a_{1}^{2}=a_{2}^{2}$ ($a_{4}\neq 0$)\\

which are conditions $(i)$ and $(ii)$. Conversely, if $a_{1}=a_{2}$ and $a_{3}=-a_{4}$ or $a_{1}=-a_{2}$ and $a_{3}=a_{4}$, then conditions $(i),(ii),(iv),(v)$ are satisfied. Substituting into condition $(iii)$, we then require that;\\

$a_{1}^{2}=a_{3}^{2}$\\

which we can achieve if;\\

$a_{1}=a_{3}$ or $a_{1}=-a_{3}$\\

We then obtain that;\\

 $P(r,t)=-a_{1}^{2}\beta\gamma m^{2}\int_{S(r)}(\Gamma''\times\Gamma')\centerdot d\overline{S}-a_{3}^{2}\beta\gamma m^{2}\int_{S(r)}(\Delta''\times\Delta')\centerdot d\overline{S}$\\

$-2a_{1}a_{2}C_{3})\int_{S(r)}(\Gamma''\times\Gamma'-\Delta''\times\Delta')\centerdot d\overline{S}$\\

 $-a_{1}a_{3}\beta\gamma m^{2} cos(2mt)(cos(2m{(r^{2}+1)^{1\over 2}\over c}))-sin(2mt)sin(2m{(r^{2}+1)^{1\over 2}\over c})\int_{S(r)}(\Delta''\times\Gamma')\centerdot d\overline{S}$\\

 $-a_{1}a_{3}\beta\gamma m^{2}cos(2mt)(cos(2m{(r^{2}+1)^{1\over 2}\over c}))-sin(2mt)sin(2m{(r^{2}+1)^{1\over 2}\over c})\int_{S(r)}(\Gamma''\times\Delta')\centerdot d\overline{S}+O({1\over r})$\\

 If we integrate through a period of $\pi\over m$, we obtain that;\\

 $\int_{t}^{t+{\pi\over m}}P(r,t)dt$\\

 $=-a_{1}^{2}\beta\gamma m^{2}{\pi\over m}\int_{S(r)}(\Gamma''\times\Gamma'+\Delta''\times\Delta')\centerdot d\overline{S}+O({1\over r})$\\

 $=-\pi a_{1}^{2}\beta\gamma m\int_{S(r)}(\Gamma''\times\Gamma'+\Delta''\times\Delta')\centerdot d\overline{S}+O({1\over r})$\\

as $\int_{t}^{t+{\pi\over m}}cos(2mt)dt=\int_{t}^{t+{\pi\over m}}sin(2mt)dt=\int_{t}^{t+{\pi\over m}}C_{3}dt=0$\\

 By Poynting's Theorem, we have that;\\

 ${dW\over dt}=-{d\over dt}\int_{B(r)}{1\over 2}(\epsilon_{0}E^{2}+{1\over \mu_{0}}B^{2})dB(r)-{1\over \mu_{0}}P(r,t)$\\

 Integrating this expression over a period of ${\pi\over m}$ and using periodicity, from Jefimenko's equations that $\overline{E}(\overline{r},t+{\pi\over m})=-\overline{E}(\overline{r},t)$, and $\overline{B}(\overline{r},t+{\pi\over m})=-\overline{B}(\overline{r},t)$, we obtain that;\\

 $W(t+{\pi\over m})-W(t)=-\int_{B(r)}{1\over 2}(\epsilon_{0}E^{2}+{1\over \mu_{0}}B^{2})dB(r)|_{t+{\pi\over m}}+\int_{B(r)}{1\over 2}(\epsilon_{0}E^{2}+{1\over \mu_{0}}B^{2})dB(r)|_{t}-{1\over \mu_{0}}\int_{t}^{t+{\pi\over m}}P(r,t)$\\

 $={1\over \mu_{0}}\pi a_{1}^{2}\beta\gamma m\int_{S(r)}(\Gamma''\times\Gamma'+\Delta''\times\Delta')\centerdot d\overline{S}+O({1\over r})$\\

It follows that, for $r>0$;\\

$\int_{S(r)}(\Gamma''\times\Gamma'+\Delta''\times\Delta')\centerdot d\overline{S}$\\

$=\int_{B(r)}(\bigtriangledown\centerdot(\Gamma''\times\Gamma'+\Delta''\times\Delta'))dB(r)$\\

$={16\pi c^{3}c(t,m)\epsilon_{0}(r^{2}+1)^{3\over 2}\over a_{1}^{2}m}+O(r^{2})$\\

where $c(t,m)=W(t+{\pi\over m})-W(t)$, the difference in mechanical energy of the charge/current distribution confined to a vanishing annulus containing $S^{1}$, is independent of $r>1$, as contained in $B(r)$, (\footnote{\label{mechanical} This idea relies on an argument in \cite{G}, that the work done on a charge $q$ is $\overline{F}\centerdot d\overline{l}=q(\overline{E}+\overline{v}\times \overline{B})\centerdot \overline{v}dt=q\overline{E}\centerdot\overline{v}=\overline{E}\centerdot \overline{J}dt$, and can be summed over any $B(r)$ with $r>1$, as $\overline{J}$ is vanishing outside $S^{1}$. The idea that that $\overline{J}$ can be represented as $\rho\overline{v}$ is pursued in \cite{dep6}.}). Letting $f(x,y,z)=(\bigtriangledown\centerdot(\Gamma''\times\Gamma'+\Delta''\times\Delta'))$, we obtain that;\\

$\int_{B(r)}f dB(r)=d(t,m)(r^{2}+1)^{3\over 2}+O(r^{2})$\\

where $d(t,m)$ is independent of $r$. Dividing by $r^{3}$, we obtain that;\\

$lim_{r\rightarrow \infty}{1\over r^{3}}\int_{B(r)}f dB(r)=d(t,m)+O({1\over r})$\\

Taking a power series expansion of $f$ in the variables $\{x,y,z\}$, and integrating term by term, we can see that $f$ must be constant. Using the volume of the ball $B(r)$ as ${4\pi r^{3}\over 3}$, we must have that $f=d(t,m)=c(t,m)=0$. It follows, letting $r\rightarrow\infty$, that the mechanical energy of the matter wave doesn't vary over a cycle and moreover that the power radiated $\int_{t}^{t+m}P(r,t)dt$ over a ball $B(r)$ and a cycle is $O({1\over r})$, for any $r>0$, and $lim_{r\rightarrow\infty}\int_{t}^{t+{\pi\over m}}P(r,t)dt=0$. \\
\end{proof}

\begin{lemma}
\label{c}
The results of Lemma \ref{wave} and Lemma \ref{infinity} hold for the wave equation with velocity $c$;\\

${\partial \Psi\over \partial x^{2}}-{1\over c^{2}}{\partial \Psi\over \partial t^{2}}=0$, $(*)$\\

with the time cycle ${\pi\over m}$ replaced by ${\pi\over mc}$\\

\end{lemma}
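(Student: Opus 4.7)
The plan is to reduce the velocity-$c$ wave equation $(*)$ to the velocity-$1$ case of Lemma \ref{wave} via the change of variable $\tau = ct$. If $\Psi$ solves $(*)$ with initial data $\{\Psi_{0},\Psi_{1}\}$, then $\tilde{\Psi}(x,\tau) := \Psi(x,\tau/c)$ satisfies ${\partial^{2}\tilde\Psi\over\partial\tau^{2}} - {\partial^{2}\tilde\Psi\over\partial x^{2}} = 0$ with initial data $\{\Psi_{0},\Psi_{1}/c\}$, since the chain rule gives ${\partial\over\partial\tau} = c^{-1}{\partial\over\partial t}$. Applying Lemma \ref{wave} to $\tilde\Psi$ and pulling back, every occurrence of $mt$ in the Fourier series becomes $mct$, and the coefficients $a'_{m}/m,\,b'_{m}/m$ pick up a compensating factor of $1/c$, becoming $a'_{m}/(mc),\,b'_{m}/(mc)$. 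The continuity equation and the conservation $P(t) = P(0)$ follow verbatim from their $\tilde\Psi$-analogues. In the symmetric-data clause the identity ${\partial J\over\partial t} = -{\partial\rho\over\partial x}$ is replaced by ${\partial J\over\partial t} = -c^{2}{\partial\rho\over\partial x}$, as one sees directly from the fundamental solutions $\rho = \cos(mx)\cos(mct)$, $J = c\sin(mx)\sin(mct)$; combined with continuity, $J$ still satisfies the velocity-$c$ wave equation.

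For the no-radiation part, I would re-run the Jefimenko calculations of Lemmas \ref{2terms}--\ref{infinity} with the new fundamental solutions $\rho^{i},J^{i}$, in which every temporal $mt$ is replaced by $mct$. The retarded-time evaluation now yields $\cos(mct - m\mathfrak{r})$ in place of $\cos(mt - m\mathfrak{r}/c)$, so wherever Lemma \ref{waveblock} produced factors $\cos(m(r^{2}+1)^{1\over 2}/c)$ and $\cos(mt)$ one instead obtains $\cos(m(r^{2}+1)^{1\over 2})$ and $\cos(mct)$. The crude bounds behind Lemma \ref{2terms} depend only on $L^{\infty}$ estimates of $\rho,\dot\rho,\overline{J},\dot{\overline{J}}$ and survive unchanged in order of decay. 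The angular-integration cancellations in Lemmas \ref{zero} and \ref{products} depend only on the parity of Fourier indices and the integrals $I_{\alpha}^{\beta}$ and $J_{\gamma}$, so the key vanishings $\int_{S(r)}(\Gamma\times\Gamma')\centerdot d\overline{S} = \int_{S(r)}(\Delta\times\Delta')\centerdot d\overline{S} = \int_{S(r)}(\Gamma\times\Delta')\centerdot d\overline{S} = \int_{S(r)}(\Delta\times\Gamma')\centerdot d\overline{S} = 0$ carry over verbatim, and the trigonometric identities $C_{1}+C_{2} = -\beta\gamma m^{2}$ together with $\int_{t}^{t+\pi/(mc)}C_{3}\,dt = 0$ persist because they are purely in the single variable $mct$.

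To execute the Poynting-theorem step I would invoke the shifted periodicity $\rho(\overline{r},t+{\pi\over mc}) = -\rho(\overline{r},t)$, $\overline{J}(\overline{r},t+{\pi\over mc}) = -\overline{J}(\overline{r},t)$, which by Jefimenko yields $\overline{E}(\overline{r},t+{\pi\over mc}) = -\overline{E}(\overline{r},t)$ and similarly for $\overline{B}$. Integration of ${dW\over dt}$ over a cycle of length $\pi/(mc)$, together with the power-series test that forces $f = \bigtriangledown\centerdot(\Gamma''\times\Gamma' + \Delta''\times\Delta')$ to be constant and hence zero, then delivers $\int_{t}^{t+\pi/(mc)} P(r,t)\,dt = O({1\over r})$ and the limit $\lim_{r\rightarrow\infty}\int_{t}^{t+\pi/(mc)}P(r,t)\,dt = 0$. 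The chief bookkeeping obstacle is tracking the several distinct factors of $c$ introduced by the substitution: those from the chain rule, from the Jefimenko kernels $\mathfrak{r}^{-1},\mathfrak{r}^{-2}$, and from differentiation in $t$; however these are all absorbed into the scalar prefactors $\alpha,\beta,\gamma$ of Lemma \ref{waveblock} and do not interact with the angular integrations, so the final conclusion is structurally unchanged and the proof reduces to a careful relabelling of the velocity-$1$ calculations.
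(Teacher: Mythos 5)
Your proposal is correct and follows essentially the same route as the paper: both rescale time via $t\mapsto ct$ (with $J_{new}=cJ(x,ct)$), track the resulting powers of $c$ through Jefimenko's equations as overall prefactors that do not disturb the angular cancellations, and then repeat the cycle-integration and Poynting argument over the new period $\pi/(mc)$. Your treatment is in fact somewhat more careful than the paper's, which dismisses the Lemma \ref{wave} part as ``clear'' and does not record the adjusted relation ${\partial J\over\partial t}=-c^{2}{\partial\rho\over\partial x}$ or the change of the spatial phase from $m(r^{2}+1)^{1/2}/c$ to $m(r^{2}+1)^{1/2}$ that you correctly note.
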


\begin{proof}

The first result is clear. If we denote a solution $\Psi_{new}$ to $(*)$ by $\Psi(x,ct)$, and let $J_{new}=cJ(x,ct)$, with corresponding $\{\rho_{new},\overline{J}_{new}\}$, then in the Jefimenko equations, we get that;\\

$\overline{E}_{2,new}(x,t)=c\overline{E}_{2}[\rho_{new},{\overline{J}_{new}\over c}]$\\

$\overline{E}_{3,new}=c^{2}\overline{E}_{2}[\rho_{new},{\overline{J}_{new}\over c}]$\\

$\overline{B}_{2,new}=c^{2}\overline{B}_{2}[\rho_{new},{\overline{J}_{new}\over c}]$\\

Again, we obtain that;\\

$\int_{S(r)}(\overline{E}_{2,new}\times \overline{B}_{2,new})\centerdot d\overline{S}$\\

$=c^{3}\int_{S(r)}(\overline{E}_{2}\times \overline{B}_{2})[\rho_{new},{\overline{J}_{new}\over c}]\centerdot d\overline{S}$\\

$=0$\\

and;\\

$\int_{S(r)}(\overline{E}_{3,new}\times \overline{B}_{2,new})\centerdot d\overline{S}$\\

$=c^{4}\int_{S(r)}(\overline{E}_{3}\times \overline{B}_{2})[\rho_{new},{\overline{J}_{new}\over c}]\centerdot d\overline{S}$\\

so that, following Lemma \ref{2terms};\\

$P^{new}(r,t)+O({1\over r})=c^{4}P(r,t)[\rho_{new},{\overline{J}_{new}\over c}]+O({1\over r})$\\

We then obtain, for the linear combination in Lemma \ref{infinity} that;\\

$P^{new}(r,t)=c^{4}[-a_{1}^{2}\beta\gamma m^{2}\int_{S(r)}(\Gamma''\times\Gamma')\centerdot d\overline{S}-a_{3}^{2}\beta\gamma m^{2}\int_{S(r)}(\Delta''\times\Delta')\centerdot d\overline{S}$\\

$-2a_{1}a_{2}C_{3}'\int_{S(r)}(\Gamma''\times\Gamma'-\Delta''\times\Delta')\centerdot d\overline{S}$\\

 $-a_{1}a_{3}\beta\gamma m^{2}cos(2mct)(cos(2m{(r^{2}+1)^{1\over 2}\over c}))-sin(2mct)sin(2m{(r^{2}+1)^{1\over 2}\over c})\int_{S(r)}(\Delta''\times\Gamma')\centerdot d\overline{S}$\\

 $-a_{1}a_{3}\beta\gamma m^{2}cos(2mct)(cos(2m{(r^{2}+1)^{1\over 2}\over c}))-sin(2mct)sin(2m{(r^{2}+1)^{1\over 2}\over c})\int_{S(r)}(\Gamma''\times\Delta')\centerdot d\overline{S}]+O({1\over r})$\\

 where $C_{3}'=C_{3}(ct)$\\

 If we integrate through a period of $\pi\over mc$, we obtain that;\\

 $\int_{t}^{t+{\pi\over mc}}P^{new}(r,t)dt$\\

 $=-a_{1}^{2}\beta\gamma m^{2}{c^{4}\pi\over mc}\int_{S(r)}(\Gamma''\times\Gamma'+\Delta''\times\Delta')\centerdot d\overline{S}+O({1\over r})$\\

 $=-c^{3}\pi a_{1}^{2}\beta\gamma m\int_{S(r)}(\Gamma''\times\Gamma'+\Delta''\times\Delta')\centerdot d\overline{S}+O({1\over r})$\\

Now we can repeat the argument to obtain the same result, that $lim_{r\rightarrow\infty}\int_{t}^{t+{\pi\over mc}}P^{new}(r,t)dt=0$.

\end{proof}

\begin{lemma}
\label{constant}
Let $\rho_{0}=a$ and $J_{0}=b$ be constants, with $\{a,b\}\subset \mathcal{R}$, then for the corresponding $(\rho_{0},\overline{J}_{0})$ as in the paper, we have that ${\partial \rho_{0}\over \partial t}+\bigtriangledown.\centerdot\overline{J}_{0}=0$. If $\{\overline{E}_{0},\overline{B}_{0}\}$ are the corresponding causal fields, they satisfy the no radiation condition, and, if $(\rho,\overline{J})$ is the solution given either in Lemmas \ref{infinity} or \ref{c}, with $\{\overline{E}_{1},\overline{B}_{1}\}$ the corresponding causal fields, then they also satisfy the no radiation condition over a cycle, with the corresponding $(\rho+\rho_{0},\overline{J}+\overline{J}_{0})$ satisfying the continuity equation.

\end{lemma}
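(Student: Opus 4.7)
The plan is to handle the three assertions in order. \textbf{Continuity.} $\partial_{t}\rho_{0}=0$ is trivial, and by the pointwise identity $\mbox{div}(\overline{K})|_{S^{1}}=J'(\theta)$ from Lemma \ref{extension}, the extension $\overline{K}_{0}(1,\theta,t)=b(-\sin(\theta),\cos(\theta),0)$ satisfies $\mbox{div}(\overline{K}_{0})|_{S^{1}}=0$ since $b$ is constant. Passing through the bump-function extension of Lemma \ref{bump} then produces a smooth pair supported on $Ann(1,\epsilon)\times(-\epsilon,\epsilon)$ satisfying the continuity equation on all of $\mathcal{R}^{3}\times\mathcal{R}_{>0}$. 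Continuity for the combined pair $(\rho+\rho_{0},\overline{J}+\overline{J}_{0})$ then follows immediately by linearity of both $\partial_{t}(\cdot)$ and $\mbox{div}(\cdot)$.

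\textbf{Static fields.} Since $\dot{\rho}_{0}\equiv 0$ and $\dot{\overline{J}}_{0}\equiv 0$, Jefimenko's equations collapse to the Coulomb and Biot--Savart integrals for a source compactly supported in $Ann(1,\epsilon)\times(-\epsilon,\epsilon)$. Bounding the integrands directly, for $r$ large enough that the distance $|\overline{r}-\overline{r'}|$ is bounded below by $r-1-\epsilon$ on the support, yields $|\overline{E}_{0}(\overline{r})|,|\overline{B}_{0}(\overline{r})|=O(1/r^{2})$ as $r\rightarrow\infty$, uniformly in $t$. Hence $|\overline{E}_{0}\times\overline{B}_{0}|=O(1/r^{4})$, and $P_{0}(r,t)=\int_{S(r)}(\overline{E}_{0}\times\overline{B}_{0})\centerdot d\overline{S}=O(1/r^{2})\rightarrow 0$, with no time averaging required.

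\textbf{Combined fields and main obstacle.} By linearity of Jefimenko's equations the causal pair for $(\rho+\rho_{0},\overline{J}+\overline{J}_{0})$ is $(\overline{E}_{0}+\overline{E}_{1},\overline{B}_{0}+\overline{B}_{1})$, and the Poynting flux expands into four pieces. The $\overline{E}_{0}\times\overline{B}_{0}$ piece contributes $O(1/r^{2})$ by the previous step, and the $\overline{E}_{1}\times\overline{B}_{1}$ piece integrates to $O(1/r)$ over a cycle $\pi/m$ (or $\pi/(mc)$) by Lemma \ref{infinity} or Lemma \ref{c}. For the two cross terms, I plan to refine the bounds of Lemma \ref{2terms} to extract uniform-in-time pointwise estimates $|\overline{E}_{1}|,|\overline{B}_{1}|=O(1/r)$ on $S(r)$, using the boundedness hypotheses of Remark \ref{bounded}; this gives $|\overline{E}_{0}\times\overline{B}_{1}|,|\overline{E}_{1}\times\overline{B}_{0}|=O(1/r^{3})$, each surface integral is $O(1/r)$ uniformly in $t$, and the bound is preserved on integration over a cycle. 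Summing the four pieces yields $\int_{t}^{t+\pi/m}P(r,t')\,dt'=O(1/r)\rightarrow 0$, which is the no-radiation condition over a cycle. The main obstacle is precisely this uniform pointwise decay for the individual fields $\overline{E}_{1},\overline{B}_{1}$: Lemma \ref{2terms} produces $O(1/(r-1)^{k})$ bounds only for certain Poynting-type products where some cancellation is already built in, so obtaining comparable estimates for the bare fields forces one to return to the term-by-term Jefimenko integrals and bound the leading radiation-zone contribution directly.
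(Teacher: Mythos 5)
Your proposal is correct and reaches the stated conclusion, but it handles the decisive step — the cross terms $\overline{E}_{0}\times\overline{B}$ and $\overline{E}\times\overline{B}_{0}$ — by a genuinely different route from the paper. The paper first applies the reduction of Lemma \ref{2terms}, which discards every product containing a near-field ($1/\mathfrak{r}^{2}$) factor, and then observes that since $\dot{\rho}_{0}=0$ and $\dot{\overline{J}}_{0}=\overline{0}$ the static source has \emph{no} far-field components at all, $\overline{E}_{0,2}=\overline{E}_{0,3}=\overline{B}_{0,2}=\overline{0}$ identically; hence every surviving cross term vanishes exactly and no pointwise estimate on the bare oscillating fields is ever needed. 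Your route instead pairs the $O(1/r^{2})$ Coulomb/Biot--Savart decay of the static fields with a uniform $O(1/r)$ bound on $\overline{E},\overline{B}$. This works, but the ``main obstacle'' you flag is not really one: the $O(1/r)$ bound on the individual causal fields needs no cancellation, since each term of the Jefimenko integrand is dominated on the compact support by $\max(|\rho|,|\dot{\rho}|,|\overline{J}|,|\dot{\overline{J}}|)/(r-1-\epsilon)$, which is exactly the kind of factor-by-factor estimate already carried out in the proof of Lemma \ref{2terms} (those bounds are products of separate bounds on each field, with no built-in cancellation). On balance the paper's argument is shorter because the static far field is exactly zero, while yours is more robust, applying to any added source whose fields decay strictly faster than $1/r$; and your explicit bookkeeping of which of the four Poynting pieces needs the cycle average (only the $\overline{E}\times\overline{B}$ self-term, via Lemma \ref{infinity}) is in fact more careful than the paper's displayed chain of equalities, which drops that term without indicating the time average.
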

\begin{proof}
We clearly have that ${\partial \rho_{0}\over \partial t}+{\partial J_{0}\over \partial x}=0$, so that when we extend $J_{0}$ to $S^{1}$ by $\overline{J}_{0}(1,\theta)=J_{0}(\theta)(-sin(\theta),cos(\theta),0)$, for $-\pi\leq \theta<\pi$, we have for the further extension $(\rho_{0},\overline{J}_{0})$ on $Ann(1,\epsilon)\times (-\epsilon,\epsilon)$, using Lemma \ref{bump}, that ${\partial \rho_{0}\over \partial t}+\bigtriangledown\centerdot\overline{J}_{0}=0$. Using Lemma \ref{2terms}, we have that;\\

$lim_{r\rightarrow\infty}P(r)=lim_{r\rightarrow\infty}\int_{S(r)}(\overline{E}_{0,2}\times\overline{B}_{0,2}+\overline{E}_{0,3}\times\overline{B}_{0,2})d\overline{S}(r)$\\

but as $\dot{\rho_{0}}=0$ and $\dot{\overline{J}_{0}}=\overline{0}$, we have that $\overline{E}_{0,2}=\overline{E}_{0,3}=\overline{B}_{0,2}=\overline{0}$, so that $lim_{r\rightarrow\infty}P(r)=0$ and $\{\overline{E}_{0},\overline{B}_{0}\}$ satisfy the no radiating condition. Again, we have that ${\partial (\rho+\rho_{0})\over \partial t}+{\partial (J+J_{0})\over \partial x}=0$, with the same remark on the extension, so that ${\partial (\rho+\rho_{0})\over \partial t}+\bigtriangledown\centerdot(\overline{J}+\overline{J}_{0})=0$. We have that $\overline{E}_{1}=\overline{E}+\overline{E}_{0}$ and $\overline{B}_{1}=\overline{B}+\overline{B}_{0}$, so that;\\

$lim_{r\rightarrow\infty}P(r)=lim_{r\rightarrow\infty}\int_{S(r)}(\overline{E}_{1}\times\overline{B}_{1})d\overline{S}(r)$\\

$=lim_{r\rightarrow\infty}\int_{S(r)}((\overline{E}+\overline{E}_{0})\times(\overline{B}+\overline{B}_{0}))d\overline{S}(r)$\\

$=lim_{r\rightarrow\infty}(\int_{S(r)}(\overline{E}\times \overline{B})d\overline{S}(r)+\int_{S(r)}(\overline{E}_{0}\times \overline{B}_{0})d\overline{S}(r)$\\

$+\int_{S(r)}(\overline{E}\times \overline{B}_{0})d\overline{S}(r)+\int_{S(r)}(\overline{E}_{0}\times \overline{B})d\overline{S}(r))$\\

$=lim_{r\rightarrow\infty}(\int_{S(r)}(\overline{E}\times \overline{B}_{0})d\overline{S}(r)+\int_{S(r)}(\overline{E}_{0}\times \overline{B})d\overline{S}(r))$\\

$=lim_{r\rightarrow\infty}(\int_{S(r)}(\overline{E}_{2}\times \overline{B}_{0,2})d\overline{S}(r)+\int_{S(r)}(\overline{E}_{3}\times \overline{B}_{0,2})d\overline{S}(r))$\\

$+lim_{r\rightarrow\infty}(\int_{S(r)}(\overline{E}_{0,2}\times \overline{B}_{2})d\overline{S}(r)+\int_{S(r)}(\overline{E}_{0,3}\times \overline{B}_{2})d\overline{S}(r))$\\

$=0$\\

as required.\\

\end{proof}
\begin{defn}
\label{temperature}
If $(\rho,\overline{J})$ satisfy the continuity equation, let;\\

$T(\overline{x},t)=|{\overline{J}(\overline{x},t)\over \rho(\overline{x},t)}|$, if $\rho(\overline{x},t)\neq 0$\\

$T(\overline{x},t)=lim_{r\rightarrow 0}{1\over vol(B(\overline{x},r))}\int_{B(\overline{x},r)}|{\overline{J}(\overline{y},t)\over \rho(\overline{y},t)}|d\overline{y}$, if $\rho(\overline{x},t)=0$\\

We say that $(\rho,\overline{J})$ are in electromagnetic thermal equilibrium at time $t$, if $T(\overline{x},t)$ is constant on $\mathcal{R}^{3}$ and in electromagnetic thermal equilibrium if $T(\overline{x},t)$ is constant on $\mathcal{R}^{3}\times\mathcal{R}_{>0}$.\\

\end{defn}

\begin{rmk}
\label{boltzmann}
The definition is motivated by Boltzmann's definition of temperature for ideal gases as ${m|\overline{v}|^{2}\over 3k}$, see \cite{dep5}, where $|\overline{v}^{2}|$ is the mean square velocity of the particles making up the configuration, $m$ is the molecular mass and $k$ is Boltzmann's constant, and by the formula $\overline{J}=\rho\overline{v}$, see \cite{G} and \cite{dep6}. A local definition of temperature is given in \cite{dep5}, when the particles are moving with different velocities. In the electromagnetic case, if $\rho>0$, a natural definition of local average speed would be;\\

$V_{1}(\overline{x},t)=lim_{r\rightarrow 0}{\int_{B(\overline{x},r)}\rho(\overline{y},t)|\overline{v}(\overline{y},t)|dB(\overline{y})\over \int_{B(\overline{x},r)}\rho(\overline{y},t)dB(\overline{y})}$\\

$=lim_{r\rightarrow 0}{\int_{B(\overline{x},r)}|\overline{J}(\overline{y},t)|dB(\overline{y})\over \int_{B(\overline{x},r)}\rho(\overline{y},t)dB(\overline{y})}$\\

$=lim_{r\rightarrow 0}{{1\over vol(B(\overline{x},r))}\int_{B(\overline{x},r)}|\overline{J}(\overline{y},t)|dB(\overline{y})\over {1\over vol(B(\overline{x},r))}\int_{B(\overline{x},r)}\rho(\overline{y},t)dB(\overline{y})}$\\

$=|{\overline{J}(\overline{x},t)\over \rho(\overline{x},t)}|$\\

and a similar calculation works for the local average squared velocity;\\

$V_{2}(\overline{x},t)={|\overline{J}(\overline{x},t)|^{2}\over \rho^{2}(\overline{x},t)}$\\

We then establish electromagnetic thermal equilibrium when $V_{1}$ or $V_{2}$ is constant, and similarly for the generalisation $T$.
\end{rmk}

\begin{lemma}
\label{thermal}
If we set $a_{1}=-a_{4}$, $a_{2}=a_{3}$, then the system defined by $(\rho,\overline{J})$, for the linear combination $\rho=\sum_{i=1}^{4}\rho^{i}$, $J=\sum_{i=1}^{4}J^{i}$, from Definition \ref{configurations}, is in electromagnetic thermal equilibrium. In particularly, the configurations from Lemma \ref{infinity} are in electromagnetic thermal equilibrium for all $t>0$.

\end{lemma}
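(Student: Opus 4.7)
The plan is to reduce the equilibrium claim to a single algebraic identity $J = -\rho$ and then translate it through the three-dimensional extension of Lemma \ref{bump}. First, reading off Definition \ref{configurations}, one observes the pairings $J^1 = \rho^4$, $J^2 = -\rho^3$, $J^3 = -\rho^2$, $J^4 = \rho^1$. Using these, the combined current satisfies $J = a_1 \rho^4 - a_2 \rho^3 - a_3 \rho^2 + a_4 \rho^1$, and substituting $a_1 = -a_4$ together with $a_2 = a_3$ gives $J = -[a_1(\rho^1 - \rho^4) + a_2(\rho^2 + \rho^3)] = -\rho$, where the last equality is just the expansion of $\rho$ under the same substitutions. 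Equivalently, the combined pair takes the compact form $\rho(x,t) = a_1 \cos(m(x+t)) + a_2 \sin(m(x+t))$ and $J(x,t) = -\rho(x,t)$.

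Next I would transport this identity to the support of the current. By Lemma \ref{extension}, $\overline{K}(1, \theta, t) = J(\theta, t)(-\sin\theta, \cos\theta, 0) = -\rho(\theta, t)(-\sin\theta, \cos\theta, 0)$, so at every point with $\rho \neq 0$ the ratio $\overline{K}/\rho$ equals the unit vector $(\sin\theta, -\cos\theta, 0)$. The bump extension of Lemma \ref{bump} multiplies both $\rho_1$ and $\overline{J}$ by the common scalar $\Phi_{\epsilon}(r)\Phi_{1,\epsilon}(z)$, so the pointwise ratio is preserved and $|\overline{J}/\rho_1| = 1$ on the open subset of the annular support where $\rho_1 \neq 0$. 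Since $\Psi(x,t) = a_1\cos(m(x+t)) + a_2\sin(m(x+t))$ is real-analytic in $(x,t)$, its zero set has Lebesgue measure zero; at such isolated zeros the ball-averaged limit in Definition \ref{temperature} then also returns the value $1$. Hence $T$ is identically $1$ throughout the support and for every $t > 0$, giving electromagnetic thermal equilibrium.

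For the second assertion, the non-trivial configurations of Lemma \ref{infinity} arise by choosing three of the $a_i$ equal to some $a$ and the fourth equal to $-a$. Of the four such cases, two (namely $(a,a,-a,a)$ and $(a,-a,a,a)$) force $a = 0$ the moment one imposes $a_1 = -a_4$, while the remaining two, $(a,a,a,-a)$ and $(-a,a,a,a)$, both automatically satisfy $a_1 = -a_4$ and $a_2 = a_3$. Hence every non-trivial configuration from Lemma \ref{infinity} falls under the present hypothesis and inherits the constant temperature $T = 1$.

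The hard part will not be the algebra but pinning down the reading of Definition \ref{temperature} at zeros of $\rho_1$ and outside the support of the matter configuration: the conclusion "$T$ is constant" must be interpreted as constancy on the support of $\rho_1$, consistent with the compactly supported bump extensions constructed in Lemma \ref{bump} and with the physical motivation in Remark \ref{boltzmann} via the identification $\overline{J} = \rho \overline{v}$.
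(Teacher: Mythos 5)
Your main argument is correct and is essentially the paper's own proof: under $a_{1}=-a_{4}$, $a_{2}=a_{3}$ one has $J=-\rho$ identically (the paper gets this by writing out the quotient and observing it equals $|-1|$; you get it via the pairing $J^{1}=\rho^{4}$, $J^{2}=-\rho^{3}$, $J^{3}=-\rho^{2}$, $J^{4}=\rho^{1}$, which is the cleaner way to see it), so $|\overline{K}/\rho|=1$ on $S^{1}$ wherever $\rho\neq 0$, and the common scalar factor $\Phi_{\epsilon}(r)\Phi_{1,\epsilon}(z)$ in the extension of Lemma \ref{bump} preserves the ratio. Your handling of the zero set of $\rho$ and of the ball-averaged branch of Definition \ref{temperature}, and your flagging of the interpretation of constancy outside the support, are in fact more careful than the paper, whose proof stops at $T|_{S^{1}}=1$.

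The one genuine slip is in your final paragraph, on the second assertion. You correctly observe that the sign patterns $(a,a,-a,a)$ and $(a,-a,a,a)$ from Lemma \ref{infinity} force $a=0$ under the constraint $a_{1}=-a_{4}$, and then conclude that ``every non-trivial configuration from Lemma \ref{infinity} falls under the present hypothesis'' --- but that contradicts what you just established: those two patterns with $a\neq 0$ are perfectly non-trivial members of the family in Lemma \ref{infinity} and are \emph{not} covered by the hypothesis $a_{1}=-a_{4}$, $a_{2}=a_{3}$. The fix is one line from your own pairing: when $a_{1}=a_{4}$ and $a_{2}=-a_{3}$ the same substitution gives $J=+\rho$ rather than $-\rho$, so $|J/\rho|=1$ and $T\equiv 1$ still hold. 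With that added, all four cases of Lemma \ref{infinity} are covered --- which is actually more than the paper does, since its proof treats only the $a_{1}=-a_{4}$, $a_{2}=a_{3}$ case and asserts the second claim without comment.
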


\begin{proof}

We have that;\\

$T(\overline{x},t)|_{S^{1}}={|\overline{J}(\overline{x},t)|\over |\rho(\overline{x},t)|}$\\

$=a|{a_{1}sin(mx)sin(mt)-a_{2}sin(mx)cos(mt)-a_{2}cos(mx)sin(mt)-a_{1}cos(mx)cos(mt)\over a_{1}cos(mx)cos(mt)+a_{2}cos(mx)sin(mt)+a_{2}sin(mx)cos(mt)-a_{1}sin(mx)sin(mt)}|$\\

$=|-1|$\\

$=1$\\

where $a=|(-sin(\theta),cos(\theta),0)|=1$, when $\rho(\overline{x},t)\neq 0$, so that, using Lemma \ref{bump}, for any configuration $(\rho,\overline{J})$, satisfying the continuity equation and extending $(\rho,J)$, we have  $T(\overline{x},t)|_{S^{1}}=1$.\\
\end{proof}
\end{section}


\begin{thebibliography}{99}
\bibitem{G} Introduction to Electrodynamics, Third Edition, Pearson, David Griffiths, (2008)\\
\bibitem{dep1} Schrodinger's Equation and Related Charge Density, Tristram de Piro, available at http://www.curvalinea.net (2019).\\
\bibitem{dep2} Nonstandard Analysis and Physics, Tristram de Piro, in progress, available at http://www.curvalinea.net (2014).\\
\bibitem{dep3} Some Arguments for the Wave Equation in Quantum Theory, Tristram de Piro, available at http://www.curvalinea.net, published in Open Journal of Mathematical Sciences, (2021).\\
\bibitem{dep7} Some Arguments for the Wave Equation in Quantum Theory 2, Tristram de Piro, available at http://www.curvalinea.net, published in Open Journal of Mathematical Sciences, (2022).\\
\bibitem{dep4} A Nonstandard Solution to the Wave Equation, Tristram de Piro, available at http://www.curvalinea.net (2020)\\
\bibitem{dep5} Nonstandard Methods for Solving the Heat Equation, Tristram de Piro, available at http://www.curvalinea.net (2018)\\
\bibitem{dep6} The Continuity Equation and Particle Motion, Tristram de Piro, available at http://www.curvalinea.net (2022), (in progress)\\
\end{thebibliography}
\end{document}